\documentclass{article}
\usepackage{amsmath}
\usepackage{amssymb}
\usepackage{amsthm}
\usepackage{bbm}
\usepackage{cite}
\usepackage{xcolor}
\usepackage{stmaryrd}
\SetSymbolFont{stmry}{bold}{U}{stmry}{m}{n}
\usepackage{euscript}
\usepackage[arrow,curve,matrix,arc,2cell,frame]{xy}
\UseAllTwocells
\usepackage[utf8]{inputenc}
\usepackage[pdftex,breaklinks,unicode]{hyperref}
\usepackage{slashed}
\DeclareFontFamily{U}{rsfs}{} 
\DeclareFontShape{U}{rsfs}{n}{it}{<->
rsfs10}{} \DeclareSymbolFont{mscr}{U}{rsfs}{n}{it}
\DeclareSymbolFontAlphabet{\scr}{mscr}
\def\mathscr{\scr}
\begin{document}
\def\e#1\e{\begin{equation}#1\end{equation}}
\def\ea#1\ea{\begin{align}#1\end{align}}
\def\eq#1{{\rm(\ref{#1})}}
\theoremstyle{plain}
\newtheorem{thm}{Theorem}[section]
\newtheorem{lem}[thm]{Lemma}
\newtheorem{prop}[thm]{Proposition}
\newtheorem{cor}[thm]{Corollary}
\newtheorem{prob}[thm]{Problem}
\newtheorem{conj}[thm]{Conjecture}
\theoremstyle{definition}
\newtheorem{dfn}[thm]{Definition}
\newtheorem{ex}[thm]{Example}
\newtheorem{rem}[thm]{Remark}
\numberwithin{figure}{section}
\numberwithin{equation}{section}
\def\dim{\mathop{\rm dim}\nolimits}
\def\codim{\mathop{\rm codim}\nolimits}
\def\vdim{\mathop{\rm vdim}\nolimits}
\def\Im{\mathop{\rm Im}\nolimits}
\def\det{\mathop{\rm det}\nolimits}
\def\Res{\mathop{\rm Res}\nolimits}
\def\Ker{\mathop{\rm Ker}}
\def\Coker{\mathop{\rm Coker}}
\def\SSym{\mathop{\rm SSym}\nolimits}
\def\Sym{\mathop{\rm Sym}\nolimits}
\def\Ext{\mathop{\rm Ext}\nolimits}
\def\cExt{\mathop{{\mathcal E}\mathit{xt}}\nolimits}
\def\Spec{\mathop{\rm Spec}}
\def\Perf{\mathop{\rm Perf}}
\def\Vect{\mathop{\rm Vect}}
\def\HSta{\mathop{\bf HSta}\nolimits}
\def\Iso{\mathop{\rm Iso}\nolimits}
\def\Aut{\mathop{\rm Aut}}
\def\End{\mathop{\rm End}}
\def\Ho{\mathop{\rm Ho}}
\def\PGL{\mathop{\rm PGL}\nolimits}
\def\GL{\mathop{\rm GL}\nolimits}
\def\SL{\mathop{\rm SL}}
\def\SU{\mathop{\rm SU}}
\def\Re{\mathop{\rm Re}}
\def\PD{\mathop{\rm PD}}
\def\sht{{\text{\rm s-t}}}
\def\td{\mathop{\rm td}\nolimits}
\def\ch{\mathop{\rm ch}\nolimits}
\def\inc{\mathop{\rm inc}\nolimits}
\def\ind{\mathop{\rm ind}\nolimits}
\def\Stab{\mathop{\rm Stab}\nolimits}
\def\supp{\mathop{\rm supp}}
\def\rank{\mathop{\rm rank}\nolimits}
\def\Hom{\mathop{\rm Hom}\nolimits}
\def\id{{\mathop{\rm id}\nolimits}}
\def\Id{{\mathop{\rm Id}\nolimits}}
\def\boo{{\mathbin{\mathbbm 1}}}
\def\modCQ{\mathop{\text{\rm mod-}\C Q}}
\def\alg{{\rm alg}}
\def\hom{{\rm hom}}
\def\pl{{\rm pl}}
\def\ran{{\rm an}}
\def\rst{{\rm st}}
\def\ss{{\rm ss}}
\def\top{{\rm top}}
\def\virt{{\rm virt}}
\def\inv{{\rm inv}}
\def\fund{{\rm fund}}
\def\coh{{\rm coh}}
\def\Top{{\mathop{\bf Top}\nolimits}}
\def\bs{\boldsymbol}
\def\ge{\geqslant}
\def\le{\leqslant\nobreak}
\def\O{{\mathcal O}}
\def\bA{{\mathbin{\mathbb A}}}
\def\bF{{\mathbin{\mathbb F}}}
\def\bG{{\mathbin{\mathbb G}}}
\def\bH{{\mathbin{\mathbb H}}}
\def\bL{{\mathbin{\mathbb L}}}
\def\P{{\mathbin{\mathbb P}}}
\def\K{{\mathbin{\mathbb K}}}
\def\R{{\mathbin{\mathbb R}}}
\def\bT{{\mathbin{\mathbb T}}}
\def\Z{{\mathbin{\mathbb Z}}}
\def\bP{{\mathbin{\mathbb P}}}
\def\Q{{\mathbin{\mathbb Q}}}
\def\N{{\mathbin{\mathbb N}}}
\def\C{{\mathbin{\mathbb C}}}
\def\CP{{\mathbin{\mathbb{CP}}}}
\def\KP{{\mathbin{\mathbb{KP}}}}
\def\RP{{\mathbin{\mathbb{RP}}}}
\def\fC{{\mathbin{\mathfrak C}\kern.05em}}
\def\fD{{\mathbin{\mathfrak D}}}
\def\fE{{\mathbin{\mathfrak E}}}
\def\fF{{\mathbin{\mathfrak F}}}
\def\A{{\mathbin{\cal A}}}
\def\acA{{\mathbin{\smash{\acute{\mathcal A}}}\vphantom{\mathcal A}}}
\def\G{{{\cal G}}}
\def\M{{\mathbin{\cal M}}}
\def\B{{\mathbin{\cal B}}}  
\def\cC{{\mathbin{\cal C}}}
\def\cD{{\mathbin{\cal D}}}
\def\cE{{\mathbin{\cal E}}}
\def\cF{{\mathbin{\cal F}}}
\def\cG{{\mathbin{\cal G}}}
\def\cH{{\mathbin{\cal H}}}
\def\cI{{\mathbin{\cal I}}}
\def\cJ{{\mathbin{\cal J}}}
\def\cK{{\mathbin{\cal K}}}
\def\cL{{\mathbin{\cal L}}}
\def\cB{{\mathbin{\cal B}}}
\def\bcM{{\mathbin{\bs{\cal M}}}}
\def\cN{{\cal N}}
\def\cP{{\mathbin{\cal P}}}
\def\cQ{{\mathbin{\cal Q}}}
\def\cR{{\mathbin{\cal R}}}
\def\cS{{\mathbin{\cal S}}}
\def\T{{{\cal T}\kern .04em}}
\def\cU{{\mathbin{\cal U}}}
\def\cV{{\mathbin{\cal V}}}
\def\cW{{\mathbin{\cal W}}}
\def\cX{{\cal X}}
\def\cY{{\cal Y}}
\def\cZ{{\cal Z}}
\def\cV{{\cal V}}
\def\cW{{\cal W}}
\def\sC{{{\mathscr C}}}
\def\sF{{{\mathscr F}}}
\def\sR{{{\mathscr R}}}
\def\sS{{{\mathscr S}}}
\def\sT{{{\mathscr T}}}
\def\sU{{{\mathscr U}}}
\def\sV{{{\mathscr V}}}
\def\sW{{{\mathscr W}}}
\def\b{{\mathfrak b}}
\def\fe{{\mathfrak e}}
\def\f{{\mathfrak f}}
\def\g{{\mathfrak g}}
\def\h{{\mathfrak h}}
\def\k{{\mathfrak k}}
\def\m{{\mathfrak m}}
\def\n{{\mathfrak n}}
\def\p{{\mathfrak p}}
\def\q{{\mathfrak q}}
\def\s{{\mathfrak s}}
\def\u{{\mathfrak u}}
\def\H{{\mathfrak H}}
\def\so{{\mathfrak{so}}}
\def\su{{\mathfrak{su}}}
\def\sp{{\mathfrak{sp}}}
\def\fW{{\mathfrak W}}
\def\fX{{\mathfrak X}}
\def\fY{{\mathfrak Y}}
\def\fZ{{\mathfrak Z}}
\def\bfb{{\bs{\mathfrak b}}}
\def\bfc{{\bs{\mathfrak c}}}
\def\bfd{{\bs{\mathfrak d}}}
\def\bfe{{\bs{\mathfrak e}}}
\def\bff{{\bs{\mathfrak f}}}
\def\bfg{{\bs{\mathfrak g}}}
\def\bfh{{\bs{\mathfrak h}}}
\def\bfU{{\bs{\mathfrak U}}}
\def\bfV{{\bs{\mathfrak V}}}
\def\bfW{{\bs{\mathfrak W}}}
\def\bfX{{\bs{\mathfrak X}}}
\def\bfY{{\bs{\mathfrak Y}}}
\def\bfZ{{\bs{\mathfrak Z}}}
\def\bE{{\bs E}}
\def\bM{{\bs M}}
\def\bN{{\bs N}}
\def\bO{{\bs O}}
\def\bQ{{\bs Q}}
\def\bS{{\bs S}}
\def\bU{{\bs U}}
\def\bV{{\bs V}}
\def\bW{{\bs W}\kern -0.1em}
\def\bX{{\bs X}}
\def\bY{{\bs Y}\kern -0.1em}
\def\bZ{{\bs Z}}
\def\al{\alpha}
\def\be{\beta}
\def\ga{\gamma}
\def\de{\delta}
\def\io{\iota}
\def\ep{\epsilon}
\def\la{\lambda}
\def\ka{\kappa}
\def\th{\theta}
\def\ze{\zeta}
\def\up{\upsilon}
\def\vp{\varphi}
\def\si{\sigma}
\def\om{\omega}
\def\De{\Delta}
\def\Ka{{\rm K}}
\def\La{\Lambda}
\def\Mu{{\rm M}}
\def\Nu{{\rm N}}
\def\Om{\Omega}
\def\Ga{\Gamma}
\def\Si{\Sigma}
\def\Th{\Theta}
\def\Up{\Upsilon}
\def\Chi{{\rm X}}
\def\Tau{{\rm T}}
\def\Nu{{\rm N}}
\def\pd{\partial}
\def\ts{\textstyle}
\def\st{\scriptstyle}
\def\sst{\scriptscriptstyle}
\def\w{\wedge}
\def\sm{\setminus}
\def\lt{\ltimes}
\def\bu{\bullet}
\def\sh{\sharp}
\def\di{\diamond}
\def\he{\heartsuit}
\def\od{\odot}
\def\op{\oplus}
\def\ot{\otimes}
\def\hot{\mathbin{\hat\otimes}}
\def\bt{\boxtimes}
\def\bp{\boxplus}
\def\ov{\overline}
\def\bigop{\bigoplus}
\def\bigot{\bigotimes}
\def\tr{\blacktriangle}
\def\iy{\infty}
\def\es{\emptyset}
\def\ra{\rightarrow}
\def\rra{\rightrightarrows}
\def\Ra{\Rightarrow}
\def\Longra{\Longrightarrow}
\def\ab{\allowbreak}
\def\longra{\longrightarrow}
\def\hookra{\hookrightarrow}
\def\dashra{\dashrightarrow}
\def\lb{\llbracket}
\def\rb{\rrbracket}
\def\ha{{\ts\frac{1}{2}}}
\def\t{\times}
\def\pr{\preceq}
\def\tl{\trianglelefteq}
\def\ci{\circ}
\def\ti{\tilde}
\def\ac{\acute}
\def\gr{\grave}
\def\d{{\rm d}}
\def\an#1{\langle #1 \rangle}
\def\ban#1{\bigl\langle #1 \bigr\rangle}
\title{The Pandharipande--Thomas rationality conjecture for superpositive curve classes on projective complex 3-manifolds}
\author{Reginald Anderson and Dominic Joyce}
\date{}
\maketitle
\begin{abstract}
Let $X$ be a projective complex 3-manifold. An effective curve class $\be\in H_2(X,\Z)$ is called {\it positive\/} if $c_1(X)\cdot\be>0$, and {\it superpositive\/} if all the effective summands of $\be$ are positive. If $X$ is Fano then all effective classes are superpositive. The second author \cite{Joyc6} developed a theory of enumerative invariants in abelian categories and wall-crossing formulae. We use this  to prove conjectures by Pandharipande and Thomas \cite{PaTh2,Pand} on the rationality and poles of generating functions of Pandharipande--Thomas invariants of $X$ with descendent insertions, for superpositive curve classes.
\end{abstract}

{\baselineskip 11pt plus 1pt
\setcounter{tocdepth}{2}
\tableofcontents
\baselineskip 12pt}

\section{Introduction}
\label{pt1}

Let $X$ be a smooth, connected projective 3-fold over $\C$ throughout. We will prove the rationality conjecture for generating functions of descendent Pand\-hari\-pan\-de--Thomas invariants, Conjecture \ref{pt1conj1}(a),(b) below, for {\it superpositive\/} curve classes $\be$. The proof uses the second author's theory of enumerative invariants in abelian categories and wall-crossing \cite{Joyc6}. See \S\ref{pt2} for more details on the following definitions and notation.

\begin{dfn}
\label{pt1def1}
Let $A_1^\alg(X)$ denote the group of algebraic $1$-cycles on $X$ modulo algebraic equivalence. A class $\be$ in either $H_2(X,\Z)$ or $A_1^\alg(X)$ is called an {\it effective curve class\/} if it is represented by a nonzero sum of algebraic curves in $X$. If $\be,\ga$ are effective curve classes, we call $\ga$ a {\it factor\/} of $\be$ if either $\ga=\be$, or $\be=\ga+\de$ for some effective curve class $\de$. An effective curve class $\be$ has only finitely many factors. We call $\be$ {\it irreducible\/} if the only factor of $\be$ is $\be$ itself. We call $\be$ {\it positive\/} if $c_1(X)\cdot\be>0$, and {\it superpositive\/} if every factor of $\be$ is positive.
\end{dfn}

If $X$ is Fano then every effective curve class on $X$ is superpositive. The next definition follows Pandharipande--Thomas~\cite{PaTh1}.

\begin{dfn}
\label{pt1def2}
A {\it Pandharipande--Thomas stable pair\/} $(F,s)$ on $X$ is a pure 1-dimensional coherent sheaf $F$ on $X$ together with a section $s:\O_X\ra F$ with $0$-dimensional cokernel. We allow the case that $F=s=0$.
\end{dfn}

For an effective curve class $\be$ and $n\in\Z$, let $P_n(X,\be)$ denote the moduli scheme of Pandharipande--Thomas stable pairs $(F,s)$ with curve class $[\supp F]=\be$ (with multiplicity) and Euler characteristic $\chi(F)=n$. As in \cite{PaTh1} it is a proper $\C$-scheme with perfect obstruction theory, and carries a virtual class
\begin{equation*}
[P_n(X,\be)]_\virt\in H_{2c_1(X)\cdot\be}(P_n(X,\be),\Z).
\end{equation*}
We can do this with $\be$ in $A_1^\alg(X)$ or $H_2(X,\Z)$; when we want to distinguish these we write $P^\alg_n(X,\be)$ when $\be\in A_1^\alg(X)$ and $P^\hom_n(X,\be)$ when~$\be\in H_2(X,\Z)$.

The usual Pandharipande--Thomas descendent insertions in $H^*(P_n(X,\be),\Q)$ are denoted $\tau_k(\eta)$ for $k\ge 0$ and $\eta\in H^*(X,\Q)$, as in \S\ref{pt21}. For a product of insertions $\prod_{i=1}^m\tau_{k_i}(\eta_i)$ of total degree $2c_1(X)\cdot\be$, we call
\begin{equation*}
\ts PT_{\be,n}\bigl(\prod_{i=1}^m\tau_{k_i}(\eta_i)\bigr)=\ts\bigl(\prod_{i=1}^m\tau_{k_i}(\eta_i)\bigr)\cdot [P_n(X,\be)]_\virt
\end{equation*}
a {\it Pandharipande--Thomas invariant}, and we write
\begin{equation*}
\ts PT_\be\bigl(\prod_{i=1}^m\tau_{k_i}(\eta_i),q\bigr)=\sum_{n\in\Z}PT_{\be,n}\bigl(\prod_{i=1}^m\tau_{k_i}(\eta_i)\bigr)q^n.
\end{equation*}

We call $PT_\be\bigl(\prod_i\tau_{k_i}(\eta_i),q\bigr)$ the {\it descendent generating series\/} in class $\be$ with insertions $\prod_i\tau_{k_i}(\eta_i)$. It packages the virtual intersection numbers over the stable-pair moduli spaces $P_n(X,\be)$ with fixed curve class $\be$ and varying Euler characteristic $n$; since $P_n(X,\be)=\es$ for $n\ll 0$, it lies in $\Q[[q]][q^{-1}]$.

If a linear algebraic $\C$-group $G$ acts on $X$, we use the analogous notation $PT^G_\be(-,q)$ for the generating function of $G$-equivariant invariants.

Pandharipande--Thomas invariants have been extensively studied, see for example \cite{MOOP,Pand,PaPi1,PaPi2,PaTh1,PaTh2,PaTh3,PaTh4,StTh,Toda1,Toda2,Toda3,Toda4}. Here is an important conjecture on their structure (usually stated for $\be\in H_2(X,\Z)$, not $\be\in A_1^\alg(X)$). Part (a) is Pandharipande--Thomas \cite[Conj.~1]{PaTh2}, and (d) is proposed in the toric case with $G=\bG_m^3$ in \cite[Ex.~6.4]{PaTh2}. Parts (b),(c),(e),(f) come from Pandharipande~\cite[Conj.s 4, 5]{Pand}.

\begin{conj}
\label{pt1conj1}
{\bf(a)} The descendent generating series $PT_\be\bigl(\ts\prod_{i=1}^m\tau_{k_i}(\eta_i),q\bigr)$ is the Laurent expansion in $q$ of a rational function $F(q)\in\Q(q)$. 
\smallskip

\noindent{\bf(b)} The poles of\/ $F(q)$ occur only at\/ $q=0$ and at roots of unity.
\smallskip

\noindent{\bf(c)} The rational function $PT_\be\bigl(\ts\prod_{i=1}^m\tau_{k_i}(\eta_i),q\bigr)$ satisfies
\begin{equation*}
PT_\be\bigl(\ts\prod_{i=1}^m\tau_{k_i}(\eta_i),q^{-1}\bigr)=
(-1)^{\sum_{i=1}^mk_i}q^{-c_1(X)\cdot\be}PT_\be\bigl(\ts\prod_{i=1}^m\tau_{k_i}(\eta_i),q\bigr).
\end{equation*}

\noindent{\bf(d)} Suppose a linear algebraic $\C$-group $G$ acts on $X,$ and acts trivially on $H_2(X,\Z)$ and\/ $A_1^\alg(X),$ and take the $\eta_i$ to lie in $H^*_G(X,\Q)$. Then the $G$-equivariant descendent generating series $PT^G_\be\bigl(\ts\prod_{i=1}^m\tau_{k_i}(\eta_i),q\bigr)$ is the Laurent expansion in $q$ of a rational function $F^G(q)\in H^*_G(*,\Q)(q)$. 
\smallskip

\noindent{\bf(e)} The poles of\/ $F^G(q)$ occur only at\/ $q=0$ and at roots of unity.
\smallskip

\noindent{\bf(f)} The analogue of\/ {\bf(c)} holds for $PT^G_\be\bigl(\ts\prod_{i=1}^m\tau_{k_i}(\eta_i),q\bigr)$.
\end{conj}

Conjecture \ref{pt1conj1}(a),(c) are proved for Calabi--Yau 3-folds $X$ with $\prod_{i=1}^m\tau_{k_i}(\eta_i)\ab =1$ by Bridgeland \cite[Th.~1.1]{Brid} and Toda \cite[Cor.~1.3]{Toda3}. Parts (a), and (d) with $G=\bG_m^3$, are proved for toric smooth projective 3-folds $X$ by Pandharipande--Pixton \cite[Th.~1]{PaPi1}. Part (a) is proved for $X$ a Fano or Calabi--Yau complete intersection in a product of projective spaces, with constraints on the insertions $\tau_{k_i}(\eta_i)$, by Pandharipande--Pixton~\cite[Th.~1]{PaPi2}. Parts (a),(c) for general $X$ with `semi-Fano' curve classes $\be$ and `primary insertions' (that is, requiring $k_i=0$ in all $\tau_{k_i}(\eta_i)$) follow from Pardon~\cite[Th.~1.7]{Pard}.

Although this is often not stated, whenever the authors above prove parts (a) or (d), they implicitly prove (b) or (e) as well because of the method used to deduce rationality, see for instance Toda \cite[Proof of Lem.~4.6]{Toda2} and~\S\ref{pt34}.

Here is our first main result. It follows from Theorem \ref{pt1thm2} below.

\begin{thm}
\label{pt1thm1} 
Conjecture\/ {\rm\ref{pt1conj1}(a),(b),(d),(e)} hold when $\be$ is a \begin{bfseries}superpositive\end{bfseries} effective curve class, in the sense of Definition\/~{\rm\ref{pt1def1}}.
\end{thm}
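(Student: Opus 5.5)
The plan is to express the generating series $PT_\be(\prod_i\tau_{k_i}(\eta_i),q)$ through wall-crossing in Joyce's framework \cite{Joyc6}, in terms of invariants with strong periodicity in $n$. We work in the abelian category of pairs $(F,s)$, with $F$ a sheaf of dimension $\le 1$, and use a one-parameter family of weak stability conditions interpolating between PT stability and a limiting condition. In that limit the semistable objects with class $(\be,n)$ are either empty or governed by invariants of one-dimensional semistable sheaves. Joyce's theory provides invariants $[\M^\rm ss_{(\be,n)}(\tau)]_\virt$ in the homology of the (projective linear) moduli stack, for all classes including those with strictly semistables, and a wall-crossing formula expressed through a Lie bracket on homology. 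Descendent insertions $\tau_k(\eta)$ extend to cohomology classes on the whole stack of pairs and sheaves, so we may pair them against both sides of the wall-crossing identities.

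First, we write the PT invariants of class $(\be,n)$ as an iterated sum of Lie brackets of invariants of one-dimensional sheaves $[\M^\rm ss_{(\ga,m)}(\mu)]_\virt$ with $\ga$ a factor of $\be$, together with the trivial pair $\O_X$. We do this by crossing the finitely many walls at slopes $m/(\omega\cdot\ga)$, in the style of Toda's $\mathrm{PT}$/$L$ correspondence. Second, we use the superpositivity hypothesis. Each bracket with a sheaf class $(\ga,m)$ shifts homological degree by $\chi$-pairings involving $c_1(X)\cdot\ga$. Since every factor $\ga$ satisfies $c_1(X)\cdot\ga>0$, only boundedly many terms contribute for each $n$, and the virtual dimensions stay controlled. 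Third, tensoring by a line bundle $L$ gives an isomorphism $\M^\rm ss_{(\ga,m)}\cong\M^\rm ss_{(\ga,m+L\cdot\ga)}$, so the sheaf invariants are periodic in $m$ up to explicit changes of the descendent classes. Similarly, the dependence on $m$ inside the Lie bracket is through polynomial factors, from Riemann--Roch terms $\chi((\ga,m),\,\cdot\,)$ and Chern-character shifts. Fourth, we sum over $n$. The generating series becomes a finite sum of products of series of the form $\sum_m P(m)\zeta^m q^{m}$, where $P$ is a polynomial and $\zeta$ is periodic data with period dividing $\omega\cdot\ga$ or $L\cdot\ga$. Each such series is a rational function with poles only at $q=0$ and at roots of unity. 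This gives (a),(b).

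For the equivariant statements (d),(e), we run the same argument with $G$-equivariant homology of the stacks. Joyce's construction is compatible with $G$-actions fixing the classes, so the coefficients simply lie in $H^*_G(*,\Q)$.

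The main obstacle is the second and third steps. We must show that, after pairing with the descendents, the coefficient of $q^n$ depends on the Euler characteristic through quasi-polynomials in $n$, uniformly across the walls. This requires tracking exactly how $\tau_k(\eta)$ pulls back under the direct-sum and twisting maps on the stacks. I would first try to express descendents via the universal Chern character $\ch_{k+2}(\mathbb F)$, which behaves multiplicatively under $\ot L$ and additively under $\op$. I would then use Joyce's projection to rank-zero (projective linear) stacks, where the $\bG_m$-dependence produces the polynomial factors. Superpositivity is exactly what guarantees the finiteness needed so that no infinite sums of such terms arise.
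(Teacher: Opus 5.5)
Your outline has the right shape: wall-crossing among pairs from PT stability to an empty chamber, one-dimensional DT invariants, twisting by $L$, and summing to a rational function. However, the step you flag as the main obstacle is where the proof actually lives, and the route you suggest for it does not work.

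\textbf{The non-equivariant case.} The identity \eq{pt2eq41} holds in the Lie algebra $H_*(\M^\pl)$, where the descendents $\tau_k(\eta)$ are not defined. Evaluating a descendent on an iterated bracket would mean pulling it back along direct sum and the $[*/\bG_m]$-action and cupping with Chern classes of Ext complexes. The result is not expressed through descendent invariants of the factors, so ``pairing both sides with descendents'' does not close up. The paper instead stays at the level of homology classes:
\begin{itemize}
\item It lifts \eq{pt2eq41} to $H_*(\M)$. PT classes are lifted by $I_{e_1=0}$, which is legitimate by \eq{pt2eq26} since the universal pair has $c_1=0$. DT classes are lifted by $I_{e_\eta=0}$ with $\eta=\PD(\om)$.
\item Brackets are replaced by explicit vertex-algebra expressions $\sum_k\frac{(-1)^k}{k!}D^k(u_k(v))$, using Theorem \ref{pt2thm3}(e).
\item Everything is transported to the single space $H_*(\M_0,\Q)$ via \eq{pt2eq7}.
\end{itemize}
In $H_*(\M_0,\Q)$ the DT classes are quasi-polynomial in $n$, not periodic. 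This is because $J^L_*$ is unipotent: $(\id-J^L_*)^7=0$ on $H_2(\M_0,\Q)\cong K_2^\sht(X)_\Q\op\La^2K_1^\sht(X)_\Q$, since $(\lb\O_X\rb-\lb L\rb)^4=0$ in $K_0^\sht(X)_\Q$. The lifted brackets are polynomial in $(n',n'')$ because $\chi(\al,\be)+\chi(\be,\al)$ does not depend on $n',n''$.

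Two structural points are also missing.
\begin{itemize}
\item \eq{pt2eq41} is proved only for $\mu^\om(\be,n)>C_\be$, while lower PT classes $[P^\alg_{n_j}(X,\be_j)]_\virt$ enter for all $n_j$. So you cannot unwind everything into sheaf invariants and $\O_X$. The paper instead inducts on the maximal number of effective summands of $\be$, proving piecewise quasi-polynomiality on all of $\Z$.
\item The series is not a finite sum of products of $\sum_mP(m)\zeta^mq^m$. The coefficients $\ti U$ depend on slope inequalities coupling all the $n_i$ and $n=\sum_in_i$. So one is summing a piecewise quasi-polynomial over fibres of a rational chamber decomposition of $\Z^k$, which needs an Ehrhart-type result such as Proposition \ref{pt3prop2}.
\end{itemize}
Also, superpositivity is not what bounds the number of terms. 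It is what gives $\Ext^3(F,F)=0$, so that the one-dimensional DT invariants and the wall-crossing formula exist at all.

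\textbf{The equivariant case.} For (d),(e), running ``the same argument'' in full $G$-equivariant homology fails. The class $c_1^G(L)$ need not be nilpotent, and the vertex and Lie operations become infinite filtered sums. The nilpotency order of $\id-J^L_*$ that one can prove grows with the equivariant degree, and by Remark \ref{pt3rem1} the authors expect the homology-level rationality to be false there. The paper instead works in truncated homology $H^{G,\le N}_*$, with $L$ chosen $G$-equivariant. There the spectral-sequence filtration gives $(\id-J^L_*)^{(N+1)(3(2+N)+1)}=0$ as in \eq{pt3eq9}, and the paper proves rationality at this level. Then, for a fixed insertion, it takes $N\ge\deg\bigl(\prod_i\tau_{k_i}(\eta_i)\bigr)-2c_1(X)\cdot\be$. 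The truncation has to be built into the homology in which the wall-crossing and the twist argument are run, not imposed only at the end.
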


Unfortunately we do not prove Conjecture \ref{pt1conj1}(c),(f). This is because of certain technical limitations in the theory of \cite{Joyc6}, discussed in Remark~\ref{pt2rem2}(c).

Ivan Karpov and Miguel Moreira \cite{KaMo2} have completely independently, and more-or-less simultaneously, proved Conjecture \ref{pt1conj1}(a)--(c) when $\be$ is a superpositive curve class for a threefold $X$ satisfying $H^{p,0}(X)=0$ for $p=1,2,3$. They do this using their beautiful paper \cite{KaMo1}, which is roughly a K-theory analogue of the first version of \cite{Joyc6}. As their theory lacks the technical limitations mentioned above, they can also prove Conjecture \ref{pt1conj1}(c) in the cases where their theory applies.

Our proof actually establishes a homology-valued refinement of Theorem \ref{pt1thm1}, before applying descendent insertions. Write $\M$ for the higher $\C$-stack of objects in $D^b\coh(X)$. Regarding a stable pair $\O_X\,{\buildrel s\over\longra}\, F$ as a complex in $D^b\coh(X)$ with $\O_X$ in degree $-1$ and $F$ in degree 0 induces a morphism $P^\alg_n(X,\be)\ra\M$. So we will regard the virtual class $[P^\alg_n(X,\be)]_\virt$ as lying in~$H_{2c_1(X)\cdot\be}(\M,\Q)$.

There is a decomposition $\M=\coprod_{\al\in K_0^\sht(X)}\M_\al$, where $K_0^\sht(X)$ is the $0^{\rm th}$ {\it semi-topological K-theory group\/} of $X$, as in \cite{FHW,FrWa1,FrWa2,FrWa3}, and each $\M_\al$ is connected. In \S\ref{pt21} we define a group morphism $\up:\Z\op A_1^\alg(X)\op\Z\ra K_0^\sht(X)$ such that $P^\alg_n(X,\be)$ maps to the component $\M_{\up(1,\be,n)}$ in $\M$, so that $[P^\alg_n(X,\be)]_\virt$ lies in~$H_{2c_1(X)\cdot\be}(\M_{\up(1,\be,n)},\Q)\subset H_{2c_1(X)\cdot\be}(\M,\Q)$.

Taking direct sum with a fixed complex in class $\up(1,\be,n)$ yields an $\bA^1$-homotopy equivalence $\M_0\ra\M_{\up(1,\be,n)}$, so $H_*(\M_{\up(1,\be,n)},\Q)\cong H_*(\M_0,\Q)$. Thus we can regard $[P^\alg_n(X,\be)]_\virt$ as lying in $H_{2c_1(X)\cdot\be}(\M_0,\Q)$, a vector space independent of $n\in\Z$. We write it as $[P^\alg_n(X,\be)]^0_\virt$ to indicate that we have moved it to $H_*(\M_0,\Q)$. This is necessary to state Theorem \ref{pt1thm2} below, as without it $[P^\alg_n(X,\be)]_\virt$ would lie in a vector space depending on $n$, and \eq{pt1eq1} would make no sense. This is also our reason for sometimes taking curve classes in $A_1^\alg(X)$ rather than $H_2(X,\Z)$, as $P^\hom_n(X,\be)$ for $\be\in H_2(X,\Z)$ may map to several components~$\M_\al$.

Now suppose a linear algebraic group $G$ acts on $X$, and acts trivially on $A_1^\alg(X)$. Then as in \S\ref{pt27}--\S\ref{pt28}, we can promote Pandharipande--Thomas virtual classes $[P^\alg_n(X,\be)]^0_\virt$ to $G$-equivariant homology $H_{2c_1(X)\cdot\be}^G(\M_0,\Q)$. Surprisingly, for reasons which will appear in Proposition \ref{pt3prop1} and Remark \ref{pt3rem1} below, the authors do {\it not\/} expect the rationality statement in Theorem \ref{pt1thm2}(a) to be true for $G$-equivariant virtual classes~$[P^\alg_n(X,\be)]^{0,G}_\virt$.

Because of this, in \S\ref{pt27} below for each $N\ge 0$ we define a {\it truncated\/} version of $G$-equivariant homology $H_*^{G,\le N}(S,\Q)$ for $\C$-stacks $S$. It has a surjective morphism $H_*^G(S,\Q)\twoheadrightarrow H_*^{G,\le N}(S,\Q)$ which roughly speaking quotients $H_*^G(S,\Q)$ by the action of $H^{>N}_G(*,\Q)$. Then in Theorem \ref{pt1thm2}(b) we prove rationality for generating functions of truncated classes $[P^\alg_n(X,\be)]^{0,G,\le N}_\virt$. This is enough to deduce Conjecture \ref{pt1conj1}(d),(e), as for any fixed insertion $\prod_{i=1}^m\tau_{k_i}(\eta_i)$ the $G$-equivariant Pandharipande--Thomas invariant $\bigl(\prod_{i=1}^m\tau_{k_i}(\eta_i)\bigr)\cdot[P^\alg_n(X,\be)]^{0,G}_\virt$ factors via $[P^\alg_n(X,\be)]^{0,G,\le N}_\virt$ for large enough~$N$.

Here is our second main result, which will be proved in \S\ref{pt3}. We describe $H_*(\M_0,\Q)$ explicitly in \eq{pt2eq8} below.

\begin{thm}
\label{pt1thm2}
{\bf(a)} Let\/ $X$ be a smooth, connected projective $3$-fold over\/ $\C,$ and\/ $\be\in A_1^\alg(X)$ be a superpositive curve class. Then the formal power series
\e
\sum_{n\in\Z}[P_n^\alg(X,\be)]^0_\virt q^n\in H_{2c_1(X)\cdot\be}(\M_0,\Q)[[q]][q^{-1}]
\label{pt1eq1}
\e
is the Laurent expansion of a rational function $F(q)\in H_{2c_1(X)\cdot\be}(\M_0,\Q)(q),$ which has poles only at\/ $q=0$ and at roots of unity.
\smallskip

\noindent{\bf(b)} Now suppose a linear algebraic $\C$-group $G$ acts on $X,$ and acts trivially on $A_1^\alg(X)$. Then for each $N\ge 0,$ the formal power series
\e
\sum_{n\in\Z}[P_n^\alg(X,\be)]^{0,G,\leq N}_\virt q^n\in H_{2c_1(X)\cdot\be}^{G,\le N}(\M_0,\Q)[[q]][q^{-1}]
\label{pt1eq2}
\e
is the expansion of a rational function $F^{G,\le N}(q)\in H_{2c_1(X)\cdot\be}^{G,\le N}(\M_0,\Q)(q),$ which has poles only at\/ $q=0$ and at roots of unity.
\end{thm}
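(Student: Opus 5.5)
Our plan is to follow Toda's strategy \cite{Toda2,Toda3} for rationality, transplanted to homology classes via the wall-crossing formulae of \cite{Joyc6}. First we place PT pairs in a family of weak stability conditions on the abelian category $\A_X$ of triples $(E,V,\phi)$, where $E$ is a sheaf of dimension $\le 1$, $V$ is a vector space and $\phi:V\ot\O_X\ra E$. There is a one-parameter family of stability conditions $\tau_t$ on objects of rank $(1,\be,n)$ that interpolates between PT stability ($t\to\iy$) and its dual ($t\to-\iy$, corresponding to $\O_X\ra F$ with $F$ having no 0-dimensional subsheaves in the derived dual sense). The theory of \cite{Joyc6} gives invariants $[\M^{\rm ss}_{(1,\be,n)}(\tau_t)]_\virt$ in $H_*(\M,\Q)$, together with invariants $[\M^{\rm ss}_{(0,\ga,m)}(\mu)]_{\rm inv}\in \check H_*(\M^\pl,\Q)$ of 1-dimensional semistable sheaves. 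The key input is that for $(0,\ga,m)$ the moduli stacks of Gieseker/slope semistable 1-dimensional sheaves of class $\ga$ are invariant under $m\mapsto m+c\cdot$(ample degree) via tensoring with $\O_X(D)$, and, because of the $\bA^1$-homotopy transport to $\M_0$, the invariants depend on $m$ only through $m$ modulo $D\cdot\ga$, i.e.\ periodically. Superpositivity enters here: the wall-crossing terms involve Lie brackets whose degree shifts are governed by $\chi(\al,\al')$, and positivity $c_1(X)\cdot\ga>0$ for every factor $\ga$ of $\be$ ensures each term lands in the correct dimension and that only finitely many walls contribute in each degree (the sheaf invariants of class $\ga$ have virtual dimension $2c_1(X)\cdot\ga$ plus a shift, forcing vanishing of problematic contributions such as those from $0$-dimensional sheaves once transported appropriately).

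Next we write the wall-crossing formula of \cite{Joyc6} from $t=-\iy$ to $t=+\iy$: the PT generating series \eq{pt1eq1} equals a finite sum over decompositions $\be=\be_0+\ga_1+\cdots+\ga_k$ of iterated Lie brackets of the dual-PT class of $(1,\be_0,n_0)$ with sheaf classes $[\M^{\rm ss}_{(0,\ga_i,m_i)}]$, with $n=n_0+\sum m_i$. Summing over $n$, each $\ga_i$-contribution becomes $\sum_{m}(\cdots)q^{m}$ with coefficient periodic in $m$ (after shifting to $\M_0$) and polynomially weighted by $\chi$-pairings, which is linear in $m$; such series are rational with poles only at roots of unity (periodic $\times$ polynomial sums give $P(q)/(1-q^N)^r$). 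The remaining issue is that the $m$-sums range over one side ($m>0$ or $m<0$ by the wall ordering), which is exactly what produces a genuine rational function rather than a two-sided formal sum. For the dual-PT side, by derived duality these are PT pairs with $n\mapsto -n$, and by induction on $\be$ (its factors are superpositive, finitely many), together with the boundedness that $P_n(X,\be_0)=\es$ for $n\ll0$ and the dual for $n\gg0$ in the appropriate chamber, the base series are Laurent polynomials. Assembling, \eq{pt1eq1} is a finite sum of products of rational functions of the required type.

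For part (b) we repeat the argument in $G$-equivariant homology, using the equivariant version of \cite{Joyc6}. The obstacle is that periodicity in $m$ fails equivariantly, since tensoring by $\O_X(D)$ need not be $G$-equivariantly trivial on the level of homology; the equivariant classes vary polynomially in $m$ with coefficients in $H^*_G(*,\Q)$ of unbounded degree. Passing to $H^{G,\le N}_*$ kills high-degree terms, so only a polynomial of bounded degree in $m$ survives, and sums $\sum m^j q^m$ remain rational with poles at $q=1$ (and roots of unity). The main obstacle I expect is the careful control of the wall-crossing in step two: verifying that the superpositivity hypothesis is exactly what makes the sum over walls finite in each homological degree and gives the one-sided ranges needed for rationality, in particular dealing with contributions of $0$-dimensional sheaves (class $\ga=0$), whose invariants I would handle by showing their pairings with the rank-one objects vanish or produce only the standard MacMahon-type factor that cancels between the PT and dual sides.
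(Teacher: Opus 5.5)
There is a genuine gap at the heart of your argument: the wall-crossing from stable pairs to their duals along a family $\tau_t$ is not available in this framework. Suitably shifted derived duals of stable pairs $\O_X\to F$ are non-split extensions of $\cExt^2(F,\O_X)$ by $\O_X[1]$; the extension class is the dual of the section. By contrast, an object $\ac\Pi(F',\C,\rho)$ with $H^{-1}\cong\O_X$ must have $\rho=0$ and is split. So the dual objects do not lie in the category of triples $\acA$. Nor does any family of weak stability conditions on $\acA$ interpolate to them: lowering $c$ in $\ac\mu^\om_c$ just takes you from PT pairs to the empty moduli space. The dual objects live in Toda's category $\an{\O_X[1],\coh_{\le 1}(X)}$, where, as Remark \ref{pt2rem2}(c) explains, \cite{Joyc6} has no framing functors, and hence no invariants or wall-crossing formulae.

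Even inside $\acA$, \eq{pt2eq40} is proved only for $\mu^\om(\be,n)>C_\be$. For smaller slopes $\ac\Pi_*$ is not an open inclusion (there are extra deformations in $D^b\coh(X)$ when $H^1(F\ot K_X)\ne 0$), so the obstruction theory is wrong (Remark \ref{pt2rem2}(b)). Hence you cannot wall-cross uniformly in $n$, and you have no Laurent-polynomial intermediate series (Toda's $\tau^{1/2}$-invariants) to anchor a product formula. In any case, identifying dual-PT classes with PT classes at $-n$ would give a functional equation, essentially Conjecture \ref{pt1conj1}(c), not rationality; this is exactly the limitation that stops the paper from proving (c). Your concern about $0$-dimensional sheaves is also misplaced. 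They have $\Ext^3(F,F)\ne 0$ and so carry no Fano-type invariants, and they never enter the paper's wall-crossing, since their slope $\iy_+$ stays above both $\ac\mu^\om_\iy$ and $\ac\mu^\om_c$.

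The paper needs only the one-sided wall-crossing in $\acA$ from $\ac\mu^\om_\iy$ to $\ac\mu^\om_{\mu^\om(\be,n)-\ep}$, where $\M^\ss_{(1,\be,n)}$ is empty. This is valid for $n>C_\be\,\om\cdot\be$ and gives the recursion \eq{pt2eq41}. The finitely many $n$ between the range where $P^\alg_n(X,\be)=\es$ and $C_\be\,\om\cdot\be$ contribute only a Laurent polynomial. Your sketch omits or misstates three further ingredients.

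(1) The recursion holds in the Lie algebra on $H_*(\M^\pl)$, and $H_*(\M^\pl_\al)$ cannot be transported to a fixed space, so summing over $n$ there is meaningless. The paper lifts to $H_*(\M)$ via $I_{e_1=0}$; PT classes lie in $\Ker R^1$ because the universal complex has $c_1=0$, which gives \eq{pt2eq26}. It then uses Theorem \ref{pt2thm3}(d),(e), transports to $\M_0$, and checks that the vertex-algebra operations are polynomial in the $n_i$. The key fact there is that $\chi(\al,\be)+\chi(\be,\al)$ is independent of $n',n''$.

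(2) The DT invariants are not periodic after transport to $\M_0$. $J^L_*$ acts on $H_2(\M_0,\Q)\cong K_2^\sht(X)_\Q\op\La^2K_1^\sht(X)_\Q$ by multiplication by $\lb L\rb$, which is unipotent but not the identity: $(\id-J^L_*)^7=0$ because $(\lb\O_X\rb-\lb L\rb)^4=0$ in $K_0^\sht(X)_\Q$. So the invariants are quasi-polynomials of degree $\le 6$ on residue classes mod $d_\be$ (Proposition \ref{pt3prop1}). In (b) the issue is therefore not a loss of periodicity. It is that the nilpotency order of $\id-J^L_*$ on $H_2^{G,\le N}(\M_0,\Q)$ grows with $N$, and $L$ must be chosen $G$-equivariant.

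(3) The coefficients $\ti U$ depend on the $n_i$ through the stability conditions. They must be chosen piecewise constant on a chamber decomposition so that the Ehrhart-type Proposition \ref{pt3prop2} applies.

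With these, induction on the maximal number of effective summands of $\be$ shows that $n\mapsto[P^\alg_n(X,\be)]^0_\virt$ is piecewise quasi-polynomial. That is what yields a rational function with poles only at $0$ and roots of unity.
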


To deduce Theorem \ref{pt1thm1} from Theorem \ref{pt1thm2}, note that the generating function $PT_\be\bigl(\ts\prod_{i=1}^m\tau_{k_i}(\eta_i),q\bigr)$ is obtained by evaluating a cohomology class $\prod_{i=1}^m\tau_{k_i}(\eta_i)$ on the series \eq{pt1eq1}. The isomorphisms $H_*(\M_{\up(1,\be,n)},\Q)\cong H_*(\M_0,\Q)$ above identify the cohomology classes $\tau_{k_i}(\eta_i)$ on $\M_{\up(1,\be,n)}$ and $\M_0$ for $k_i\ge -1$, so replacing $[P^\alg_n(X,\be)]_\virt$ by $[P^\alg_n(X,\be)]^0_\virt$ does not change things. Thus Conjecture \ref{pt1conj1}(a),(b) for $\be\in A_1^\alg(X)$ follow from Theorem \ref{pt1thm2}(a). To prove Conjecture \ref{pt1conj1}(a),(b) for $\be\in H_2(X,\Z)$, we sum Conjecture \ref{pt1conj1}(a),(b) for $\ga\in A_1^\alg(X)$ over the finitely many effective $\ga\in A_1^\alg(X)$ with $\Pi_\alg^\hom(\ga)=\be$, as~in~\eq{pt2eq1}.

For the $G$-equivariant case, note that Theorem \ref{pt1thm1}(b) gives rationality in $N$-{\it truncated\/} $G$-equivariant homology $H_*^{G,\le N}(-,\Q)$. Evaluating a $G$-equivariant cohomology class $\prod_{i=1}^m\tau_{k_i}(\eta_i)$ on \eq{pt1eq2} gives a rational function in
\begin{equation*}
H^{G,\le N}_{2c_1(X)\cdot\be - \deg(\prod_{i=1}^m\tau_{k_i}(\eta_i))}(*,\Q)[[q]][q^{-1}].
\end{equation*}
If $N\ge \deg(\prod_{i=1}^m\tau_{k_i}(\eta_i))-2c_1(X)\cdot\be$ then
\begin{align*}
H^{G,\le N}_{2c_1(X)\cdot\be-\deg(\prod_{i=1}^m\tau_{k_i}(\eta_i))}(*,\Q)&=
H^G_{2c_1(X)\cdot\be-\deg(\prod_{i=1}^m\tau_{k_i}(\eta_i))}(*,\Q)\\
&\cong H_G^{\deg(\prod_{i=1}^m\tau_{k_i}(\eta_i))-2c_1(X)\cdot\be}(*,\Q).
\end{align*}
Hence Conjecture \ref{pt1conj1}(d),(e) follow from Theorem \ref{pt1thm2}(b) as for Conjecture \ref{pt1conj1}(a),\ab (b), provided we take $N$ large enough.

If the morphisms \eq{pt2eq5} below are not injective after $-\ot_\Z\Q$ then Theorem \ref{pt1thm2}(a) is stronger than Conjecture \ref{pt1conj1}(a),(b), as by \eq{pt2eq8} below the Pand\-hari\-pan\-de--Thomas invariants $PT^\alg_{\be,n}\bigl(\ts\prod_{i=1}^m\tau_{k_i}(\eta_i)\bigr)$ taken over all $k_i\ge -1$ and $\eta_i$ do not determine $[P_n^\alg(X,\be)]_\virt$ as an element of~$H_*(\M_{\up(1,\be,n)},\Q)$.

Here is an outline of the proof of Theorem \ref{pt1thm2}:
\begin{itemize}
\setlength{\itemsep}{0pt}
\setlength{\parsep}{0pt}
\item[(i)] In the second version of \cite{Joyc6}, the second author will prove a wall-crossing formula relating classes $\Pi^\pl_*\bigl([P_n(X,\be)]_\virt\bigr)$ in \eq{pt2eq11} with Donaldson--Th\-omas invariants $[\M_{(\be,n)}^\ss(\mu)]_\inv$ counting 1-dimensional $\mu$-semistable coherent sheaves $F$ on $X$ with $\lb F\rb=(\be,n)$, for superpositive $\be$. This is part of a much larger theory \cite{Joyc6} of invariants and wall-crossing formulae for semistable objects in abelian categories.

The wall-crossing formula is written using a Lie bracket on the homology $H_*(\M^\pl)$ of the `projective linear' moduli stack $\M^\pl$ of objects in $D^b\coh(X)$. This Lie bracket is defined using a vertex algebra structure on the homology $H_*(\M)$ of the ordinary moduli stack $\M$ of objects in $D^b\coh(X)$. These structures were discovered by the second author~\cite{Joyc5}.
\item[(ii)] If the stability condition $\mu$ is defined using K\"ahler class $\om=c_1(L)$ for $L\ra X$ an ample line bundle, then tensor product by $L$ induces an isomorphism $\M_{(\be,n)}^\ss(\mu)\ra \M_{(\be,n+c_1(L)\cdot\be)}^\ss(\mu)$. Thus, the $[\M_{(\be,n)}^\ss(\mu)]_\inv$ have a periodicity property in $n$, made precise in Proposition \ref{pt3prop1} below. 
\item[(iii)] We will prove that for superpositive $\be\in A_1^\alg(X)$, there exist $N\in\Z$, $d\ge 1$, and polynomials $P_j(n)\in H_{2c_1(X)\cdot\be}(\M_0,\Q)[n]$ for $1\le j\le d$, with
\begin{equation*}
[P_n^\alg(X,\be)]^0_\virt=P_j(n)\quad\text{if $n\ge N$ and $n\equiv j\mod d$.}
\end{equation*}
We do this using the wall-crossing formula in (i), a technique for lifting $\Pi^\pl_*\bigl([P_n(X,\be)]_\virt\bigr)$ to $[P_n(X,\be)]_\virt$ in \S\ref{pt26}, explicit computations in vertex algebras, and induction on the number of factors of $\be$. Here the periodicity property in $n$ mod $d$ is deduced from the periodicity property of the $[\M_{(\be,n)}^\ss(\mu)]_\inv$ in (ii).
\item[(iv)] Theorem \ref{pt1thm2}(a) follows from (iii) and $P_n(X,\be)=\es$ if $n\ll 0$. 
\item[(v)] The $G$-equivariant analogues hold, provided we work in $H_*^{G,\le N}(\cdots)$. 
\end{itemize}

We are broadly following a well-known method: the proofs of Conjecture \ref{pt1conj1}(a)--(c) for Calabi--Yau 3-folds by Bridgeland \cite{Brid} and Toda \cite{Toda3,Toda4} use this strategy with the Joyce--Song wall-crossing formula \cite{JoSo} for Donaldson--Thomas invariants of Calabi--Yau 3-folds.

There are two main differences between our approach and \cite{Brid,Toda3,Toda4}: firstly, in the superpositive case the invariants are homology classes rather than rational numbers, and the Lie bracket, involving vertex algebras, is far more complicated. Secondly, we use a different change of stability condition to \cite{Brid,Toda3,Toda4}, which unfortunately does not allow us to prove Conjecture \ref{pt1conj1}(c),(f). This is because of certain technical limitations in the theory of \cite{Joyc6}, which mean it currently cannot be applied to the Bridgeland--Toda set up; see Remark \ref{pt2rem2}(c) on this.

A sequel by the first author \cite{Ande} uses the wall-crossing formulae \eq{pt2eq40}--\eq{pt2eq42} below to compute examples of Pandharipande--Thomas invariants from one-dimensional Donaldson--Thomas invariants, and vice versa.
\medskip

\noindent{\it Acknowledgements.} The authors would like to thank H\"ulya Arg\"uz and Pierrick Bousseau for useful conversations, and Ivan Karpov and Miguel Moreira for helpful comments, and for generously agreeing to the simultaneous arXiv release of their parallel paper \cite{KaMo2}, and some referees. For the purpose of open access, the authors have applied a CC BY public copyright licence to any Author Accepted Manuscript (AAM) version arising from this submission. The second author was supported during this research by EPSRC grant EP/X040674/1.

\section{Background material}
\label{pt2} 

We will assume the reader is already familiar with $\C$-schemes $X$ and the abelian category $\coh(X)$ of coherent sheaves on $X$, as in Hartshorne \cite{Hart}, with Artin $\C$-stacks as in G\'omez \cite{Gome}, Olsson \cite{Olss} and Laumon--Moret-Bailly \cite{LaMo}, with triangulated categories and derived categories $D^b\A$ as in Gelfand--Manin \cite{GeMa} and derived categories of coherent sheaves $D^b\coh(X)$ as in Huybrechts \cite{HuLe}, and with Gieseker (semi)stability of coherent sheaves and moduli schemes of (semi)stable sheaves as in Huybrechts--Lehn \cite{HuLe} and Gieseker~\cite{Gies}.

\subsection{Pandharipande--Thomas theory}
\label{pt21}

Throughout this section let $X$ be a smooth, connected projective 3-fold over $\C$.

\begin{dfn}
\label{pt2def1}
Let $Z_1(X)$ be the group of algebraic $1$-cycles on $X$. We write
\begin{equation*}
A_1^\alg(X)=Z_1(X)/{\sim_\alg}
\end{equation*}
for the abelian group of algebraic 1-cycles modulo algebraic equivalence. There is a natural cycle-class morphism
\begin{equation*}
\Pi_\alg^\hom:A_1^\alg(X)\longrightarrow H_2(X,\Z).
\end{equation*}
Whenever $\be\in A_1^\alg(X)$ and $\alpha\in H^2(X,\Q)$, we write
$\alpha\cdot\be$ for $\alpha\cdot\Pi_\alg^\hom(\be)$.

Note that $A_1^\alg(X)$ is different from the Chow homology group $CH_1(X)$ of algebraic 1-cycles on $X$ modulo {\it rational\/} equivalence, which has a surjective morphism $CH_1(X)\twoheadrightarrow A_1^\alg(X)$. Also $A_1^\alg(X)$ is discrete, in contrast to $CH_1(X)$. 

If $F$ is a pure 1-dimensional coherent sheaf on $X$ then the support $\supp F$, taken with multiplicity, is an algebraic 1-cycle, and has a class $[\supp F]$ in $A_1^\alg(X)$ or $H_2(X,\Z)$, which is an effective curve class if $F\ne 0$. For the $H^2(X,\Z)$ version we have $[\supp F]=\PD(c_2(F))$, the Poincar\'e dual of the second Chern class $c_2(F)$ in $H^4(X,\Z)$. Define the {\it class\/} of $F$ in either $A_1^\alg(X)\op\Z$ or $H_2(X,\Z)\op\Z$ to be $\lb F\rb=([\supp F],n)$, where $n=\chi(F)=\dim H^0(F)-\dim H^1(F)$ is the holomorphic Euler characteristic of $F$.
\end{dfn}

\begin{dfn}
\label{pt2def2}
As in Pandharipande--Thomas \cite[\S 2]{PaTh1}, there is a projective moduli $\C$-scheme $P_n(X,\be)$ whose $\C$-points $[F,s]$ are isomorphism classes of stable pairs $(F,s)$ with $\lb F\rb=(\be,n)$. Here either $(\be,n)\in A_1^\alg(X)\op\Z$ or $(\be,n)\in H_2(X,\Z)\op\Z$. When we want to distinguish the two we write $P_n^\alg(X,\be)$ for the former, and $P_n^\hom(X,\be)$ for the latter. For $\be\in H_2(X,\Z)$ we have
\e
P^\hom_n(X,\be)=\coprod_{\ga\in A_1^\alg(X):\Pi_\alg^\hom(\ga)=\be}P^\alg_n(X,\ga),
\label{pt2eq1}
\e
with only finitely many nonempty terms on the right hand side. If $P_n(X,\be)\ne\es$ then either $(\be,n)=(0,0)$, in which case $P_0(X,0)\cong\Spec\C$ is the single point $[0,0]$, or $\be$ is an effective curve class and $n\in\Z$. We have $P_n(X,\be)=\es$ if~$n\ll 0$.

There is a natural perfect obstruction theory $\phi:\cE^\bu\ra\bL_{P_n(X,\be)}$ on $P_n(X,\be)$ in the sense of Behrend--Fantechi \cite{BeFa}, with $\rank\cE^\bu=c_1(X)\cdot\be$. This is defined by regarding pairs $\O_X\,{\buildrel s\over\longra}\, F$ as objects $\cI^\bu$ in the derived category $D^b\coh(X)$, where $\O_X$ is in degree $-1$ and $F$ in degree 0, and $\det\cI^\bu\cong\O_X$. Then the obstruction theory is defined using the trace-free Ext complex $\cExt^\bu(\cI^\bu,\cI^\bu)_0$, and is natural for deformations of objects $\cI^\bu$ in $D^b\coh(X)$ with fixed determinant $\det\cI^\bu\cong\O_X$. Thus we have a virtual class
\begin{equation*}
[P_n(X,\be)]_\virt\in H_{2c_1(X)\cdot\be}(P_n(X,\be),\Z).
\end{equation*}
This is zero for dimensional reasons unless either $c_1(X)\cdot\be=0$ (the `Calabi--Yau case' \cite[\S 2.4]{PaTh1}) or $c_1(X)\cdot\be>0$ (the `Fano case'~\cite[\S 3.6]{PaTh1}).

There is a universal coherent sheaf $\fF\ra X\t P_n(X,\be)$, flat over $P_n(X,\be)$, and a universal section $\s:\O_{X\t P_n(X,\be)}\ra\fF$, such that the restriction of $(\fF,\s)$ to $X\t\{[(F,s)]\}$ is isomorphic to $(F,s)$ for all $\C$-points $[(F,s)]\in P_n(X,\be)$.

For all $k\in\N$ and $\eta\in H^l(X,\Q)$, define $\tau_k(\eta)\in H^{2k+l-2}(P_n(X,\be),\Q)$ by 
\e
\tau_k(\eta)=(\Pi_{P_n(X,\be)})_*\bigl(\Pi_X^*(\eta)\cup\ch_{2+k}(\fF)\bigr),
\label{pt2eq2}
\e
where $\Pi_X,\Pi_{P_n(X,\be)}$ are the projections from $X\t P_n(X,\be)$ to $X,P_n(X,\be)$. These are the usual Pandharipande--Thomas descendent insertions.

Suppose $m\ge 0$ and $k_i\in\N$, $\eta_i\in H^{l_i}(X,\Q)$ for $i=1,\ldots,m$ with $\sum_{i=1}^m(2k_i+l_i-2)=2c_1(X)\cdot\be$.  Then we define the {\it Pandharipande--Thomas invariant\/}
\e
PT_{\be,n}\bigl(\ts\prod_{i=1}^m\tau_{k_i}(\eta_i)\bigr)=\bigl(\ts\prod_{i=1}^m\tau_{k_i}(\eta_i)\bigr)\cdot[P_n(X,\be)]_\virt
\quad\text{in $\Q$.}
\label{pt2eq3}
\e
We combine these into a generating function
\e
PT_\be\bigl(\ts\prod_{i=1}^m\tau_{k_i}(\eta_i),q\bigr)=\sum_{n\in\Z}PT_{\be,n}\bigl(\ts\prod_{i=1}^m\tau_{k_i}(\eta_i)\bigr)q^n \quad\text{in $\Q[[q]][q^{-1}]$.}
\label{pt2eq4}
\e
\end{dfn}

\subsection{\texorpdfstring{Moduli stacks of objects in $D^b\coh(X)$}{Moduli stacks of objects in Dᵇcoh(X)}}
\label{pt22}

The first part of this section works for $X$ of any dimension~$m\ge 0$.

\begin{dfn}
\label{pt2def3}
Let $X$ be a smooth, connected, projective $\C$-scheme with
$\dim_\C X=m$. Write $\coh(X)$ for the abelian category of coherent
sheaves on $X$ and $D^b\coh(X)$ for its derived category. We identify their
Grothendieck groups
\begin{equation*}
K_0(\coh(X))=K_0(D^b\coh(X)).
\end{equation*}

We write $K_i^\sht(X)$, $i\ge 0$ for the {\it semi-topological K-theory\/} of $X$, as in Friedlander--Haesemeyer--Walker \cite{FHW,FrWa1,FrWa2,FrWa3}. This interpolates between the algebraic K-theory and the topological K-theory of $X$. There are natural morphisms
\e
\Pi^\sht_\top:K_i^\sht(X)\longra K^{-i}_\top(X^\ran)=:K^{-i}_\top(X)
\label{pt2eq5}
\e
to the topological complex K-theory $K^*_\top(-)$ of the underlying complex analytic space $X^\ran$ of $X$. By Bott periodicity $K^{2j}_\top(X)\cong K^0_\top(X)$ and $K^{2j+1}_\top(X)\cong K^1_\top(X)$ for all $j\in\Z$. Here $K_0^\sht(X)$ is the Grothendieck group of algebraic vector bundles modulo algebraic equivalence. 

There is a natural surjective morphism $K_0(\coh(X))\twoheadrightarrow K_0^\sht(X)$. For each object $E^\bu\in D^b\coh(X)$, we write $\lb E^\bu\rb\in K_0^\sht(X)$ for the image of $[E^\bu]\in K_0(\coh(X))$ under this morphism. The Chern character $\ch:K_0(\coh(X))\ra H^{\rm even}(X,\Q)$ factors as $K_0(\coh(X))\ab\twoheadrightarrow K_0^\sht(X)\ra H^{\rm even}(X,\Q)$.

Write $\M$ for the moduli stack of objects in $D^b\coh(X)$. It is a higher $\C$-stack in the sense of To\"en--Vezzosi \cite{Toen1,Toen2,ToVe1,ToVe2}, and exists by To\"en--Vaqui\'e \cite{ToVa}. The $\C$-points of $\M$ are isomorphism classes $[E^\bu]$ of objects $E^\bu\in D^b\coh(X)$. There is a natural decomposition $\M=\coprod_{\al\in K_0^\sht(X)}\M_\al$, where $\M_\al$ is the moduli stack of $E^\bu\in D^b\coh(X)$ with $\lb E^\bu\rb=\al$ in $K_0^\sht(X)$. Then $\M_\al$ is open and closed in $\M$, and furthermore $\M_\al$ is nonempty and connected for each $\al\in K_0^\sht(X)$. That is, the set of connected components $\pi_0(\M)$ of $\M$ is exactly $K_0^\sht(X)$. This follows from the definition of $K_0^\sht(X)$ and properties of perfect complexes.

As we explain in \S\ref{pt24}, there is a natural morphism $\Psi:[*/\bG_m]\t\M\ra\M$ which on $\C$-points acts by $\Psi_*:(*,[E^\bu])\mapsto[E^\bu]$, for all objects $E^\bu$ in $D^b\coh(X)$, and on isotropy groups acts by $\Psi_*:\Iso_{[*/\bG_m]\t\M}(*,[E^\bu])\cong\bG_m\t\Aut(E^\bu)\ra \Iso_\M([E^\bu])\cong\Aut(E^\bu)$ by $(\la,\mu)\mapsto \la\mu=(\la\cdot\id_{E^\bu})\ci\mu$ for $\la\in\bG_m$ and $\mu\in\Aut(E^\bu)$. Here $[*/\bG_m]$ is a group stack, and $\Psi:[*/\bG_m]\t\M\ra\M$ is an action of $[*/\bG_m]$ on $\M$, which is free on $\M\sm\{[0]\}$. We may take the quotient of $\M$ by $\Psi$ to get a stack $\M^\pl$, which we call the {\it projective linear moduli stack}, with projection $\Pi^\pl:\M\ra\M^\pl$, in a co-Cartesian square in the $\iy$-category $\HSta_\C$ of higher $\C$-stacks:
\begin{equation*}
\xymatrix@C=130pt@R=15pt{ *+[r]{[*/\bG_m]\t\M} \drtwocell_{}\omit^{}\omit{^{}} \ar[r]_(0.65){\Psi} \ar[d]^{\pi_\M} & *+[l]{\M} \ar[d]_{\Pi^\pl} \\
*+[r]{\M} \ar[r]^(0.35){\Pi^\pl} & *+[l]{\M^\pl.\!} }
\end{equation*}
This construction is known in the literature as {\it rigidification}, as in Abramovich--Olsson--Vistoli \cite{AOV} and Romagny \cite{Roma}, written $\M^\pl=\M\!\!\fatslash\,\bG_m$ in \cite{AOV,Roma}.

The splitting $\M=\coprod_{\al\in K_0^\sht(X)}\M_\al$ descends to $\M^\pl=\coprod_{\al\in K_0^\sht(X)}\M_\al^\pl$, with $\M_\al^\pl=\M_\al\!\!\fatslash\,\bG_m$ nonempty and connected.
\end{dfn}

\begin{dfn}
\label{pt2def4}
As in Simpson \cite{Simp} and Blanc \cite[\S 3.1]{Blan}, a higher $\C$-stack $S$ has a {\it topological realization\/} $S^\top$, which is a topological space natural up to homotopy equivalence. Topological realization gives a functor $(-)^\top:\Ho(\HSta_\C)\ab\ra\Top^{\bf ho}$ from the homotopy category of $\HSta_\C$ to the category $\Top^{\bf ho}$ of topological spaces with morphisms homotopy classes of continuous maps. 

Let $S$ be a higher $\C$-stack. We define the {\it homology $H_*(S)$ of\/ $S$ with coefficients in\/} $\Q$ to be $H_*(S)=H_*(S,\Q)=H_*(S^\top,\Q)$, the usual homology of the topological space $S^\top$ over $\Q$. Similarly we define the cohomology $H^*(S)=H^*(S,\Q)=H^*(S^\top,\Q)$. These are sometimes called the {\it Betti (co)homology}, to distinguish them from other (co)homology theories of stacks. 

If a linear algebraic $\C$-group $G$ acts on $S$, we can also define $G$-{\it equivariant\/} ({\it co\/}){\it homology\/} $H_*^G(S,\Q),H^*_G(S,\Q)$, which we discuss in \S\ref{pt27}. Note that our version of equivariant homology may be unfamiliar to some readers. 
\end{dfn} 

We will almost always take (co)homology of topological spaces or stacks {\it over the rationals\/} $\Q$, and when we omit the coefficient ring we mean it to be~$\Q$. 

Let $X,\M,\M_\al,\M^\pl,\M^\pl_\al$ be as in Definition \ref{pt2def3}. We will be interested in the $\Q$-homology groups
\e
H_*(\M,\Q)=\bigop_{\!\!\!\al\in K_0^\sht(X)\!\!\!}H_*(\M_\al,\Q),\quad H_*(\M^\pl,\Q)=\bigop_{\!\!\!\al\in K_0^\sht(X)\!\!\!}H_*(\M^\pl_\al,\Q).
\label{pt2eq6}
\e
We explain in \S\ref{pt24}--\S\ref{pt25} that by the second author \cite{Joyc5}, $H_*(\M)$ has the structure of a {\it graded vertex algebra}, and $H_*(\M^\pl)$ the structure of a {\it graded Lie algebra}.

For $\al\in K_0^\sht(X)$, pick $E^\bu\in D^b\coh(X)$ with $\lb E^\bu\rb=\al$. There is a stack morphism $\M_0\ra\M_\al$ mapping $[F^\bu]\mapsto[E^\bu\op F^\bu]$ on $\C$-points. This is an $\bA^1$-homotopy equivalence, with $\bA^1$-homotopy inverse $\M_\al\ra\M_0$ mapping $[F^\bu]\mapsto[E^\bu[1]\op F^\bu]$ on $\C$-points. Thus it induces an isomorphism
\e
H_*(\M_\al,\Q)\cong H_*(\M_0,\Q).
\label{pt2eq7}
\e
This is independent of the choice of $[E^\bu]\in\M_\al$, as $\M_\al$ is connected. Note that the analogue does {\it not\/} work for $\M_\al^\pl$, $\M_0^\pl$, as mapping $[F^\bu]\mapsto[E^\bu\op F^\bu]$ maps $\bG_m\id_{F^\bu}\mapsto \id_{E^\bu}\op \bG_m\id_{F^\bu}$, rather than $\bG_m\id_{F^\bu}\mapsto \bG_m(\id_{E^\bu}\op\id_{F^\bu})$, on isotropy groups.

Using work of Antieau--Heller \cite[Th.~2.3]{AnHe}, Blanc \cite[Th.~4.21]{Blan}, and Milnor--Moore \cite[App.]{MiMo}, the second author's PhD student Jacob Gross \cite[\S 4]{Gros1}, \cite[\S 4]{Gros2} shows that the homotopy groups $\pi_i(\M_\al^\top)$ have canonical isomorphisms $\pi_i(\M_\al^\top)\cong K_i^\sht(X)$ for all $\al\in K_0^\sht(X)$ and $i>0$, and for each $\al\in K_0^\sht(X)$ we have a canonical isomorphism
\e
H_*(\M_\al,\Q)\cong\SSym^*\bigl(\bigop\nolimits_{i\ge 1}K_i^\sht(X)\ot_\Z\Q\bigr).
\label{pt2eq8}
\e
Here $\SSym^*(-)=\bigop_{l\ge 0}\SSym^l(-)$ denotes the supersymmetric algebra of a $\Z$-graded $\Q$-vector space, and\/ $K_i^\sht(X)\ot_\Z\Q$ is graded of degree $i$. Thus, \eq{pt2eq8} is the tensor product of the symmetric algebras on $K_{2i}^\sht(X)\ot_\Z\Q,$ and the exterior algebras on $K_{2i-1}^\sht(X)\ot_\Z\Q,$ for $i\ge 1$.

\begin{dfn}
\label{pt2def5}
Let $X$ be a smooth, connected projective 3-fold over $\C$. There is a natural group morphism $\up:\Z\op A_1^\alg(X)\op\Z\ra K_0^\sht(X)$ such that if $E^\bu=[V\ot\O_X\,{\buildrel\rho\over\longra}\, F]$ is a perfect complex on $X$ with $V$ a finite-dimensional vector space over $\C$ and $F$ a 1-dimensional sheaf on $X$, such that $V\ot\O_X$ is in degree $-1$ and $F$ in degree 0, then $\lb E^\bu\rb=\up(\dim_\C V,\lb F\rb)$, where $\lb F\rb\in A_1^\alg(X)\op\Z$ is as in \S\ref{pt21}. We identify stable pairs $\O_X\,{\buildrel s\over\longra}\, F$ as objects $\cI^\bu$ in the derived category $D^b\coh(X)$, where $\O_X$ is in degree $-1$ and $F$ in degree $0$. For $(\be,n)\in A_1^\alg(X)\op\Z$, there is a universal complex $[\O_{X\t P_n(X,\be)}\,{\buildrel\s\over\longra}\,\fF]$ on $X\t P_n(X,\be)$, and this induces a morphism
\e
\ze_{\be,n}:P_n(X,\be)\ra \M_{\up(1,\be,n)}.
\label{pt2eq9}
\e

After tensoring the usual Pandharipande--Thomas virtual class on $P_n^\alg(X,\be)$ with $\Q$, we
write
\e
[P_n^\alg(X,\be)]_\virt
\in H_{2c_1(X)\cdot\be}(\M_{\up(1,\be,n)},\Q)
\label{pt2eq10}
\e
for its pushforward under $(\ze_{\be,n})_*$.

We will also use the image of \eq{pt2eq10} under $\Pi^\pl:\M_{\up(1,\be,n)}\ra\M^\pl_{\up(1,\be,n)}$:
\e
\Pi^\pl_*\bigl([P_n^\alg(X,\be)]_\virt\bigr)\in H_{2c_1(X)\cdot\be}(\M^\pl_{\up(1,\be,n)},\Q).
\label{pt2eq11}
\e

Now using \eq{pt2eq10} and the isomorphism \eq{pt2eq7}, we can instead write 
\e
[P_n^\alg(X,\be)]^0_\virt\in H_{2c_1(X)\cdot\be}(\M_0,\Q).
\label{pt2eq12}
\e
The superscript 0 in $[P_n^\alg(X,\be)]^0_\virt$ means we move it from $\M_{\up(1,\be,n)}$ to~$\M_0$. 
\end{dfn}

The reason we introduced $A_1^\alg(X)$ above is that the class $\lb F\rb\in H_2(X,\Z)\op\Z$ might not determine the class of $F$ in $K_0^\sht(X)$, so $\up$ might not be well-defined as a map $\Z\op H_2(X,\Z)\op\Z\ra K_0^\sht(X)$, but the definition of algebraic equivalence of 1-cycles ensures that the class $\lb F\rb\in A_1^\alg(X)\op\Z$ does determine~$[F]\in K_0^\sht(X)$.

Note that although $[P_n^\alg(X,\be)]_\virt$ is defined in $\Z$-homology, we project it to $\Q$-homology, as the proofs of Theorems \ref{pt1thm1} and \ref{pt1thm2} work over $\Q$, not~$\Z$.

The composition $\Pi^\pl\ci\ze_{\be,n}:P_n(X,\be)\ra \M^\pl_{\up(1,\be,n)}$ embeds $P_n^\alg(X,\be)$ as an open substack of $\M^\pl_{\up(1,\be,n)}$. In some ways \eq{pt2eq11} is more natural than \eq{pt2eq10}, and the wall-crossing formulae \eq{pt2eq40}--\eq{pt2eq42} below involve \eq{pt2eq11}. However, the cohomology classes $\tau_k(\eta)$ in \eq{pt2eq2} live in $H^*(\M_{\up(1,\be,n)},\Q)$ rather than $H^*(\M_{\up(1,\be,n)}^\pl,\Q)$, so to define the Pandharipande--Thomas invariants \eq{pt2eq3} we need to start from \eq{pt2eq10}, not \eq{pt2eq11}. See \S\ref{pt26} on how to relate \eq{pt2eq10} and~\eq{pt2eq11}.

\subsection{Graded vertex algebras and graded Lie algebras}
\label{pt23}

For background on vertex algebras, we recommend Frenkel--Ben-Zvi \cite{FrBZ}.

\begin{dfn}
\label{pt2def6}
Let $V_*=\bigop_{n\in\Z}V_n$ be a graded $\Q$-vector space. Form the vector space $V_*[[z]][z^{-1}]$ of $V_*$-valued Laurent series in a formal variable $z$, and make it $\Z$-graded by declaring $\deg z=-2$. A {\it field\/} on $V_*$ is a $\Q$-linear map $V_*\ra V_*[[z]][z^{-1}]$, graded of some degree. The set of all fields on $V_*$ is denoted $\cF(V_*)$ and is considered as a graded $\Q$-vector space by declaring $\cF(V_*)_n$ to be the set of degree $n$ fields $V_*\ra V_*[[z]][z^{-1}]$ for~$n\in\Z$.

A {\it graded vertex algebra\/} $(V_*,\boo,D,Y)$ over $\Q$ is a $\Z$-graded $\Q$-vector space $V_*$ with an identity element $\boo\in V_0$, a $\Q$-linear {\it translation operator\/} $D:V_*\ra V_{*+2}$ of degree 2, and a grading-preserving {\it state-field correspondence\/} $Y:V_*\ra\cF(V_*)_*$ written $Y(u,z)v=\sum_{n\in\Z}u_n(v)z^{-n-1}$, where $u_n$ maps $V_*\ra V_{*+a-2n-2}$ for $u\in V_a$, satisfying:
\begin{itemize}
\setlength{\itemsep}{0pt}
\setlength{\parsep}{0pt}
\item[(i)] $Y(\boo,z)v=v$ for all $v\in V_*$.
\item[(ii)] $Y(v,z)\boo=e^{zD}v$ for all $v\in V_*$.
\item[(iii)] For all $u\in V_a$ and $v\in V_b$, there exists $N\gg 0$ such that for all $w\in V_*$
\end{itemize}
\begin{equation*}
(z_1-z_2)^N\bigl(Y(u,z_1)Y(v,z_2)w-(-1)^{ab}Y(v,z_2)Y(u,z_1)w\bigr)=0\;\>\text{in $V_*[[z_1^{\pm 1},z_2^{\pm 1}]]$.}
\end{equation*}
\end{dfn}

\begin{dfn}
\label{pt2def7}
A {\it graded Lie algebra\/} over $\Q$ is a pair $(V_*,[\,,\,])$, where $V_*=\bigop_{a\in\Z}V_a$ is a graded $\Q$-vector space, and $[\,,\,]:V_*\t V_*\ra V_*$ is a $\Q$-bilinear map called the {\it Lie bracket}, which is graded (that is, $[\,,\,]$ maps $V_a\t V_b\ra V_{a+b}$ for all $a,b\in\Z$), such that for all $a,b,c\in\Z$ and $u\in V_a$, $v\in V_b$ and $w\in V_c$ we have:
\begin{gather*}
[v,u]=(-1)^{ab+1}[u,v],\\
(-1)^{ca}[[u,v],w]+(-1)^{ab}[[v,w],u]+(-1)^{bc}[[w,u],v]=0.
\end{gather*}
\end{dfn}

The next proposition is due to Borcherds \cite[\S 4]{Borc}.

\begin{prop}
\label{pt2prop1}
Let\/ $(V_*,\boo,D,Y)$ be a graded vertex algebra over $\Q$. We may construct a graded Lie algebra $(\check V_*,[\,,\,])$ over $\Q$ as follows. Noting the shift in grading, define a $\Z$-graded\/ $\Q$-vector space $\check V_*$ by 
\begin{equation*}
\check V_n=V_{n+2}/D(V_n)\qquad\text{for\/ $n\in\Z,$} 
\end{equation*}
so that\/ $\check V_*=V_{*+2}/D(V_*)$. If\/ $u\in V_{a+2}$ and\/ $v\in V_{b+2},$ the Lie bracket on $\check V_*$ is
\e
\bigl[u+D(V_a),v+D(V_b)\bigr]=u_0(v)+D(V_{a+b})\in\check V_{a+b}.
\label{pt2eq13}
\e
\end{prop}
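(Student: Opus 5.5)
The plan is to follow Borcherds' argument. I would first derive from the axioms (i)--(iii) of Definition \ref{pt2def6} three standard identities for a graded vertex algebra, and then read off well-definedness, graded antisymmetry and the graded Jacobi identity for \eq{pt2eq13}. As a preliminary check on gradings, if $u\in V_{a+2}$ and $v\in V_{b+2}$ then $u_0$ has degree $(a+2)-2=a$, so $u_0(v)\in V_{a+b+2}$, giving a class in $\check V_{a+b}$ as required. Also $a+2\equiv a$ mod 2, so the Koszul signs for $\check V_*$ agree with those for $V_*$.

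\textbf{Step 1 (translation covariance).} I would show $[D,Y(u,z)]=\frac{d}{dz}Y(u,z)$ and $Y(Du,z)=\frac{d}{dz}Y(u,z)$. The route is the Goddard uniqueness argument from locality (iii) together with the vacuum axioms (i),(ii), as in Frenkel--Ben-Zvi \cite{FrBZ}. In modes this reads
\begin{equation*}
(Du)_n=-n\,u_{n-1},\qquad D(u_n v)=u_n(Dv)+(Du)_n v.
\end{equation*}
Taking $n=0$ gives $(Du)_0=0$ and $u_0(Dv)=D(u_0v)$. Hence $u_0$ maps $D(V_*)$ into $D(V_*)$, and $u\mapsto u_0$ kills $D(V_*)$. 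Therefore \eq{pt2eq13} is independent of the chosen representatives.

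\textbf{Step 2 (skew-symmetry).} From locality (iii) applied with $w=\boo$, using (ii) and Step 1, I would derive
\begin{equation*}
Y(u,z)v=(-1)^{ab}e^{zD}Y(v,-z)u.
\end{equation*}
Comparing coefficients of $z^{-1}$ gives
\begin{equation*}
u_0v=(-1)^{ab}\sum_{k\ge0}\frac{(-1)^{k+1}}{k!}D^k\bigl(v_k u\bigr).
\end{equation*}
All terms with $k\ge1$ lie in $D(V_*)$, so $u_0v\equiv -(-1)^{ab}v_0u=(-1)^{ab+1}v_0u$ modulo $D(V_*)$. This is the antisymmetry axiom of Definition \ref{pt2def7}.

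\textbf{Step 3 (commutator formula and Jacobi).} From locality I would derive the Borcherds commutator formula
\begin{equation*}
[u_m,v_n]=\sum_{j\ge0}\binom{m}{j}(u_jv)_{m+n-j}.
\end{equation*}
This follows by multiplying the locality identity by $z_1^m z_2^n$ and taking formal residues, with the associativity identity $Y(u_jv,z)$ obtained as usual via Dong's lemma. Setting $m=n=0$ shows that $u_0$ is a graded derivation of the $0$-product:
\begin{equation*}
u_0(v_0w)-(-1)^{ab}v_0(u_0w)=(u_0v)_0w.
\end{equation*}
Passing to the quotient and rearranging with the antisymmetry from Step 2 gives the graded Jacobi identity in the cyclic form of Definition \ref{pt2def7}.

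I expect the main obstacle to be Step 3, and to a lesser extent Step 1. In both, the axioms as stated give only locality and the vacuum properties, so the mode identities must be extracted carefully through formal-distribution calculus while tracking the graded signs $(-1)^{ab}$. Once these identities are established, the Lie algebra verification itself is a short formal computation.
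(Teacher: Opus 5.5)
\textbf{The gap is in Step 1.} Translation covariance $[D,Y(u,z)]=\frac{d}{dz}Y(u,z)$ does not follow from axioms (i)--(iii) of Definition~\ref{pt2def6}, and Goddard uniqueness cannot produce it.

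Uniqueness does give the other half of your mode identities. The field $\frac{d}{dz}Y(u,z)$ is local with every $Y(v,w)$: differentiate (iii) and multiply by one more factor of $z-w$. By (ii) it agrees with $Y(Du,z)$ on $\boo$. Hence $Y(Du,z)=\frac{d}{dz}Y(u,z)$, so $(Du)_n=-nu_{n-1}$ and $(Du)_0=0$, and the bracket is well defined in the first slot.

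To get $D(u_nv)=u_n(Dv)+(Du)_nv$, you would have to apply uniqueness to the field $[D,Y(u,z)]$. Its locality with the $Y(v,w)$ is exactly what is unknown, so the argument is circular. In fact the identity is false. Let $V$ be a commutative $\Q$-algebra in even degrees, $D$ any degree $2$ linear map with $D\boo=0$, and $Y(u,z)v=(e^{zD}u)v$. Then (i)--(iii) hold, with locality for $N=0$. But $[D,Y(u,z)]=\frac{d}{dz}Y(u,z)$ holds if and only if $D$ is a derivation.

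Steps 2 and 3 rest on the same identity. Skew-symmetry needs $Y(v,w)e^{zD}=e^{zD}Y(v,w-z)$. The commutator formula needs it to identify the OPE coefficients with $Y(u_jv,w)$ via uniqueness, which requires moving $e^{wD}$ past $Y(u,z)$.

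This cannot be fixed by a cleverer derivation, because with Definition~\ref{pt2def6} read literally the statement itself fails. Take the following data:
\begin{itemize}
\item $W$, the rank one Heisenberg vertex algebra, graded by twice conformal weight, with translation operator $T_W$ and $h=h_{-1}\boo$, so that $Y_W(h,z)h\in z^{-2}\boo+W[[z]]$;
\item $A=\Q[y]/(y^4)$ with $\deg y=2$;
\item $S:A\ra A$ the degree $2$ map with $S(y)=y^2$ and $S(1)=S(y^2)=S(y^3)=0$, which is not a derivation.
\end{itemize}
On $V=W\ot A$ put identity $\boo\ot 1$, $D=T_W\ot 1+1\ot S$, and $Y(w\ot a,z)(w'\ot b)=Y_W(w,z)w'\ot(e^{zS}a)b$.

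Axioms (i)--(iii) all hold. Let $u=h\ot y$, which has even degree. Since $(e^{zS}y)y=y^2+zy^3$, we get $u_0u=\boo\ot y^3$. This is not in $D(V)$: $T_W$ raises conformal weight and the weight $0$ part of $W$ is $\Q\boo$, so the $\boo\ot A$ component of any element of $D(V)$ lies in $\boo\ot S(A)=\boo\ot\Q y^2$. Thus $[u,u]\ne 0$ in $\check V_*$, contradicting antisymmetry.

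What is missing is an extra hypothesis, namely translation covariance. It is an axiom in \cite{FrBZ} and is built into the definition of vertex algebra in \cite{Borc}; the paper gives no proof beyond attributing the result to \cite{Borc}. Once you add that axiom, your Steps 1--3 are exactly Borcherds' argument and go through as written.
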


\subsection{Vertex algebras on the homology of moduli stacks}
\label{pt24}

If $\A$ is a well-behaved $\C$-linear additive category, such as $\coh(X)$, $D^b\coh(X)$ for $X$ a smooth projective $\C$-scheme, or $\modCQ$, $D^b\modCQ$ for $Q$ a quiver, and $\M$ is the moduli stack of objects in $\A$, the second author \cite{Joyc5} defines a graded vertex algebra structure on the Betti $\Q$-homology $H_*(\M)$. These vertex algebras are important in the enumerative invariant theory of \cite{Joyc6}. We explain them when~$\A=D^b\coh(X)$.

\begin{dfn}
\label{pt2def8}
Let $X$ be a smooth, connected, projective $\C$-scheme with $\dim_\C X=m$, and use the notation of \S\ref{pt22}, with the moduli stack $\M=\coprod_{\al\in K_0^\sht(X)}\M_\al$ of objects in $D^b\coh(X)$ and its Betti homology $H_*(\M)$ over $\Q$ as in \eq{pt2eq6}. The {\it Euler form\/} is the biadditive map $\chi:K_0(\coh(X))\t K_0(\coh(X))\ab\ra\Z$ defined for all $E^\bu,F^\bu\in D^b\coh(X)$ by
\begin{equation*}
\chi([E^\bu],[F^\bu])=\sum_{i\in\Z}(-1)^i\dim_\C\Ext^i(E^\bu,F^\bu).
\end{equation*}
It factors via the projection $K_0(\coh(X))\twoheadrightarrow K_0^\sht(X)$. The Grothendieck--Riemann--Roch Theorem \cite[\S A.4]{Hart} says that
\begin{equation*}
\chi(\lb E^\bu\rb,\lb F^\bu\rb)=\int_X\ch(E^\bu)^\vee\cup\ch(F^\bu)\cup\td(X),
\end{equation*}
where $\ch:K_0^\sht(X)\ra H^{\rm even}(X,\Q)$ is the Chern character, $(E^\bu)^\vee$ is the derived dual, and $\td(X)$ the Todd class, as in~\cite[App.~A]{Hart}. 

There is a {\it universal perfect complex\/} $\cU^\bu\ra X\t\M$ such that $\cU^\bu\vert_{X\t\{[E^\bu]\}}\cong E^\bu$. The {\it Ext complex\/} $\cExt^\bu$ is a perfect complex on $\M\t\M$, given by
\begin{equation*}
\cExt^\bu=(\Pi_{23})_*\bigl[\Pi_{12}^*((\cU^\bu)^\vee)\ot\Pi_{13}^*(\cU^\bu)\bigr],
\end{equation*}
where $\Pi_{ij}$ projects to the product of the $i^{\rm th}$ and $j^{\rm th}$ factors of $X\t\M\t\M$. It has $H^i\bigl(\cExt^\bu\vert_{([E^\bu],[F^\bu])}\bigr)\cong\Ext^i(E^\bu,F^\bu)$ for $E^\bu,F^\bu\in D^b\coh(X)$ and $i\in\Z$. We write $\cExt^\bu_{\al,\be}$ for the restriction of $\cExt^\bu$ to $\M_\al\t\M_\be\subseteq\M\t\M$, for $\al,\be$ in $K_0^\sht(X)$. Then~$\rank\bigl(\cExt^\bu_{\al,\be}\bigr)=\chi(\al,\be)$.

There is a natural morphism of stacks $\Phi:\M\t\M\ra\M$ which on $\C$-points acts by $\Phi_*:([E^\bu],[F^\bu])\mapsto[E^\bu\op F^\bu]$, for all objects $E^\bu,F^\bu\in D^b\coh(X)$, and on isotropy groups acts by $\Phi_*:\Iso_{\M\t\M}([E^\bu],[F^\bu])\cong\Aut(E^\bu)\t\Aut(F^\bu)\ra \Iso_\M([E^\bu\op F^\bu])\cong\Aut(E^\bu\op F^\bu)$ by $(\la,\mu)\mapsto\bigl(\begin{smallmatrix}\la & 0 \\ 0 & \mu\end{smallmatrix}\bigr)$ for $\la\in\Aut(E^\bu)$ and $\mu\in\Aut(F^\bu)$, using the obvious matrix notation for $\Aut(E^\bu\op F^\bu)$. That is, $\Phi$ is the morphism of moduli stacks induced by direct sum in the additive category $D^b\coh(X)$. It is associative and commutative in~$\Ho(\HSta_\C)$.

There is a natural morphism of stacks $\Psi:[*/\bG_m]\t\M\ra\M$ which on $\C$-points acts by $\Psi_*:(*,[E^\bu])\mapsto[E^\bu]$, for all objects $E^\bu$ in $D^b\coh(X)$, and on isotropy groups acts by $\Psi_*:\Iso_{[*/\bG_m]\t\M}(*,[E^\bu])\cong\bG_m\t\Aut(E^\bu)\ra \Iso_\M([E^\bu])\cong\Aut(E^\bu)$ by $(\la,\mu)\mapsto \la\mu=(\la\cdot\id_{E^\bu})\ci\mu$ for $\la\in\bG_m$ and $\mu\in\Aut(E^\bu)$. We have identities in~$\Ho(\HSta_\C)$:
\begin{align*}
\Psi\ci(\id_{[*/\bG_m]}\t\Phi)&=\Phi\ci\bigl(\Psi\ci\Pi_{12},\Psi\ci\Pi_{13}\bigr):
[*/\bG_m]\t\M^2\longra\M,\\
\Psi\ci(\id_{[*/\bG_m]}\t\Psi)&=\Psi\ci(\Om\t\id_\M):
[*/\bG_m]^2\t\M\longra\M,
\end{align*}
where $\Pi_{ij}$ projects to the $i^{\rm th}$ and $j^{\rm th}$ factors, and
$\Om:[*/\bG_m]^2\ra[*/\bG_m]$ is induced by the morphism $\bG_m\t\bG_m\ra\bG_m$ mapping $(\la,\mu)\mapsto\la\mu$. Write
\begin{align*}
\Phi_{\al,\be}&=\Phi\vert_{\M_\al\t\M_\be}:\M_\al\t\M_\be\longra\M_{\al+\be},\\
\Psi_\al&=\Psi\vert_{[*/\bG_m]\t\M_\al}:[*/\bG_m]\t\M_\al\longra\M_\al.
\end{align*}

The quotient stack $[*/\bG_m]$ has topological realization $[*/\bG_m]^\top\simeq\CP^\iy$. Thus we may write
\e
H_*([*/\bG_m])\cong \Q[t] \quad\text{with $\deg t=2$, and $[\CP^n]=t^n$,}
\label{pt2eq14}
\e
where $\CP^n\hookra\CP^\iy$ is the standard inclusion.

We will define a graded vertex algebra structure on the homology $H_*(\M)$. The inclusion of the zero object $0\in D^b\coh(X)$ gives a morphism $[0]:*\hookra\M$ inducing $\Q\cong H_0(*)
\ra H_0(\M)$, and we define $\boo\in H_0(\M)$ to be the image of $1\in\Q$ under this. Define the translation operator $D:H_*(\M)\ra H_{*+2}(\M)$ by
\e
D(u)=\Psi_*(t\bt u)
\label{pt2eq15}
\e
where $t\in H_2([*/\bG_m])$ is as in \eq{pt2eq14}, and
\begin{equation*}
\bt:H_2([*/\bG_m])\t H_k(\M)\longra H_{k+2}([*/\bG_m]\t\M)
\end{equation*}
is the exterior tensor product in homology, and $\Psi_*:H_{k+2}([*/\bG_m]\t\M)\ra H_{k+2}(\M)$ is pushforward along $\Psi$.

Using \eq{pt2eq6}, for $u\in H_*(\M_\al)\subset H_*(\M)$ and $v\in H_*(\M_\be)\subset H_*(\M)$, define
\e
\begin{split}
&Y(u,z)v=Y(z)(u\ot v)= (-1)^{\chi(\al,\be)} \sum\nolimits_{i,j\ge 0} z^{\chi(\al, \be)+\chi(\be,\al)-i+j}\cdot{}\\
&\bigl(\Phi_{\al,\be}\ci(\Psi_\al\t\id_{\M_\be})\bigr)_*\bigl(t^j\bt ((u \bt v) \cap c_i((\cExt_{\al,\be}^\bu)^\vee\op \si_{\al,\be}^*(\cExt^\bu_{\be,\al})))\bigr),
\end{split}
\label{pt2eq16}
\e
where $t^j\in H_{2j}([*/\bG_m])$ is as in \eq{pt2eq14}, and $\si_{\al,\be}:\M_\al\t\M_\be\ra\M_\be\t\M_\al$ exchanges the factors. Using \eq{pt2eq6}, for $n\in\Z$ and $\al\in K_0^\sht(X)$ we write
\e
\hat H_n(\M_\al)= H_{n-2\chi(\al,\al)}(\M_\al),\quad \hat H_n(\M)=\bigop_{\al\in K_0^\sht(X)}\hat H_n(\M_\al).
\label{pt2eq17}
\e
That is, $\hat H_*(\M)$ is $H_*(\M)$, but with grading shifted by $-2\chi(\al,\al)$ in the component $H_*(\M_\al)\subset H_*(\M)$. The second author \cite{Joyc5} proves:	
\end{dfn}

\begin{thm}
\label{pt2thm1}
$(\hat H_*(\M), \boo,D,Y)$ above is a graded vertex algebra over $\Q$.
\end{thm}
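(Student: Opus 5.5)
We would prove Theorem \ref{pt2thm1} by checking the axioms (i)--(iii) of Definition \ref{pt2def6} directly from \eq{pt2eq15}--\eq{pt2eq16}. The tools are the identities for $\Phi,\Psi$ in $\Ho(\HSta_\C)$ listed in Definition \ref{pt2def8}, and the behaviour of $\cExt^\bu$ under pullback by $\Phi$ and $\Psi$. Two preliminary points come first. The grading: for $u\in\hat H_a(\M_\al)$ and $v\in\hat H_b(\M_\be)$ the coefficient of $z^{-n-1}$ in \eq{pt2eq16} has homological degree $a+b-2\chi(\al,\al)-2\chi(\be,\be)+2j-2i$, and in $\hat H_*(\M_{\al+\be})$ this is shifted by $+2\chi(\al+\be,\al+\be)$. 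Bilinearity of $\chi$ together with $\deg z=-2$ then shows that $u_n$ has degree $a-2n-2$, as required. Finiteness: $c_i$ vanishes on a given homology class for $i$ large, so the $i$-sum is finite and each $Y(u,z)v$ is a genuine Laurent series in $z$.

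Axioms (i) and (ii) are essentially formal. For (i), $\boo$ is supported on $\M_0=\{[0]\}$-component at the zero object, where $\cExt^\bu_{0,\be}$ and $\cExt^\bu_{\be,0}$ are zero and $\chi(0,\be)=0$. Also $\Psi_0$ restricted to $[*/\bG_m]\t\{[0]\}$ factors through a point, so $t^j$ pushes forward to zero for $j>0$. Only $i=j=0$ survives, and $\Phi_{0,\be}$ is the identity up to homotopy, giving $Y(\boo,z)v=v$. For (ii), with $v=\boo$ only $i=0$ contributes, and the $j$-sum reads $\sum_j z^j\Psi_*(t^j\bt u)$. We must identify $\Psi_*(t^j\bt u)$ with $D^j(u)/j!$. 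This follows from $\Psi\ci(\id\t\Psi)=\Psi\ci(\Om\t\id)$ and the coproduct structure $\Om_*(t\bt t)$, using that the Pontryagin-type product on $H_*([*/\bG_m])$ dual to cup product on $H^*(\CP^\iy)$ satisfies $t^{\bt j}\mapsto j!\,t^j$ in divided-power form.

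The main obstacle is locality, axiom (iii). We would first establish two lemmas.
\begin{itemize}
\item[(A)] The pullback of $\cExt^\bu$ under $\Phi\t\id$ and $\id\t\Phi$ splits as a direct sum. This gives multiplicativity of the total Chern class term under composing two fields.
\item[(B)] The pullback of $\cExt^\bu_{\al,\be}$ under $\Psi_\al\t\id$ (respectively $\id\t\Psi_\be$) is $\cExt^\bu_{\al,\be}$ tensored with the dual (respectively standard) weight-one line bundle $L$ on $[*/\bG_m]$. By the splitting principle, $c(E\ot L^{\pm1})$ is then expressed in terms of $c_1(L)$, which is what generates the $t^j$ and $z$ powers.
\end{itemize}
With (A) and (B) in hand, we would express $Y(u,z_1)Y(v,z_2)w$ as a pushforward from $[*/\bG_m]^2\t\M_\al\t\M_\be\t\M_\gamma$ of $u\bt v\bt w$, capped with a product of three factors of the form
\[
(z_k-z_l)^{\chi(\cdot,\cdot)+\chi(\cdot,\cdot)}\,c_{(z_k-z_l)^{-1}}(\cdots),
\]
together with single-variable factors in $z_1$ and $z_2$. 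Here we rewrite the $\bG_m$-weight variables in terms of the $z$'s, and $c_{w}(E)=\sum_i c_i(E)w^i$ denotes the Chern polynomial. The expression is symmetric under exchanging $(u,z_1)$ with $(v,z_2)$ up to the Koszul sign $(-1)^{ab}$ and the sign $(-1)^{\chi(\al,\be)+\chi(\be,\al)}$. Part of this sign check is that $\chi(\al,\be)+\chi(\be,\al)$ has the same parity as the correct shifted-degree sign.

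The only asymmetry is which expansion of $(z_1-z_2)^{-k}$ appears, in $|z_1|>|z_2|$ versus $|z_2|>|z_1|$. Multiplying by $(z_1-z_2)^N$, with $N$ at least the largest negative power occurring, turns both expansions into the same polynomial. The order of that power is bounded since Chern classes of degree above the dimension of the classes involved act by zero on the relevant homology, and this bound on $N$ depends only on $u,v$. The hardest and most error-prone step will be the sign bookkeeping and the careful identification of $t^j$ pushforwards with expansions in $z$ via (B). This is where we would spend most of the effort, by first verifying it in the case where all complexes $\cExt^\bu$ are replaced by formal line bundles using the splitting principle.
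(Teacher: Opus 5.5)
Your approach is the right one. The paper gives no argument of its own and quotes the result from \cite{Joyc5}, whose proof is a direct verification of the axioms of the kind you outline. Your ingredients are the ones needed: the splitting of $\cExt^\bu$ under $\Phi$, the twist by the weight-one line bundle under $\Psi$, the identity $\Psi_*(t^j\bt u)=D^j(u)/j!$, and the comparison of the two expansions of powers of $z_1-z_2$. The identity for $\Psi_*(t^j\bt u)$ comes from $\Om^*(x)=x_1+x_2$, i.e.\ $\Om_*(t^a\bt t^b)=\binom{a+b}{a}t^{a+b}$; this is dual to $\Om^*$, not to cup product, but your conclusion is correct. Two points in your sketch are wrong as stated and need repair.

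First, for $D^b\coh(X)$ the component $\M_0$ is not the point $\{[0]\}$. It contains objects such as $E\op E[1]$, and $\cExt^\bu_{0,\be}$ is a nonzero rank-zero complex on $\M_0\t\M_\be$; this is exactly the distinction drawn in Remark \ref{pt2rem1}(b).
\begin{itemize}
\item In (i) and (ii) you should write $\boo=[0]_*(1)$ and use the projection formula, so that only $([0]\t\id)^*\cExt^\bu_{0,\be}=0$ is used.
\item Likewise, what you need is $\Phi_{0,\be}\ci([0]\t\id_{\M_\be})\simeq\id_{\M_\be}$, not that $\Phi_{0,\be}$ itself is the identity.
\end{itemize}

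Second, you have misidentified the sign mechanism in locality. The grading shifts $2\chi(\al,\al)$ are all even, so they cannot absorb anything. Meanwhile $r=\chi(\al,\be)+\chi(\be,\al)$ can be odd. For instance, $\chi(\lb\O_X\rb,\lb F\rb)+\chi(\lb F\rb,\lb\O_X\rb)=c_1(X)\cdot[\supp F]$ for $F$ one-dimensional, and this equals $3$ for a line on a quadric $3$-fold. So the parity coincidence you propose to check is false in general.

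What you actually need is a third lemma alongside (A) and (B). Write $\Theta_{\al,\be}=(\cExt^\bu_{\al,\be})^\vee\op\si_{\al,\be}^*(\cExt^\bu_{\be,\al})$. Then $\si_{\al,\be}^*\Theta_{\be,\al}\cong\Theta_{\al,\be}^\vee$. Hence the $\al\be$ factor in $Y(v,z_2)Y(u,z_1)w$, pulled back by $\si_{\al,\be}$, is $\sum_l(-1)^lc_l(\Theta_{\al,\be})(z_2-z_1)^{r-l}=(-1)^r\sum_lc_l(\Theta_{\al,\be})(z_1-z_2)^{r-l}$, expanded in $|z_2|>|z_1|$. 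This $(-1)^r$ is cancelled exactly by the ratio $(-1)^{\chi(\al,\be)}/(-1)^{\chi(\be,\al)}$ of the prefactors in \eq{pt2eq16}, which is why that prefactor is there. What remains is only the Koszul sign $(-1)^{ab}$ from $\si_*(v\bt u)$. With these corrections your argument goes through as planned.
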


\subsection{Lie algebras on the homology of moduli stacks}
\label{pt25}

Let $\A$ be a well-behaved $\C$-linear additive category, and $\M,\M^\pl$ be the usual moduli stack and the `projective linear' moduli stack of objects in $\A$, as in \S\ref{pt22}. We have seen in \S\ref{pt24} that $H_*(\M)$ is a graded vertex algebra. The second author \cite{Joyc5} shows $H_*(\M^\pl)$ is a {\it graded Lie algebra}, which is related to the vertex algebra structure on $H_*(\M)$ via Proposition \ref{pt2prop1}. These Lie algebras play a central r\^ole in the enumerative invariant theory of \cite{Joyc6}: we regard enumerative invariants as classes in $H_*(\M^\pl)$, and wall-crossing formulae are written using the Lie bracket on $H_*(\M^\pl)$. We explain them when~$\A=D^b\coh(X)$.

\begin{dfn}
\label{pt2def9}
Continue in the situation of Definition \ref{pt2def8}. As in Definition \ref{pt2def3} we have the projective linear moduli stack $\M^\pl=\coprod_{\al\in K_0^\sht(X)}\M_\al^\pl$ of objects in $D^b\coh(X)$. As for \eq{pt2eq17}, using \eq{pt2eq6}, for $n\in\Z$ and $\al\in K_0^\sht(X)$ we write
\e
\check H_n(\M_\al^\pl)\!=\!H_{n+2-2\chi(\al,\al)}(\M_\al^\pl),\;\> \check H_n(\M^\pl)\!=\bigop_{\!\!\!\!\!\!\!\al\in K_0^\sht(X)\!\!\!\!\!\!\!\!\!\!\!}\check H_n(\M_\al^\pl).
\label{pt2eq18}
\e
That is, $\check H_*(\M^\pl)$ is $H_*(\M^\pl)$, but with grading shifted by $2-2\chi(\al,\al)$ in the component $H_*(\M_\al^\pl)\subset H_*(\M^\pl)$. The next theorem is proved in~\cite{Joyc5}.
\end{dfn}

\begin{thm}
\label{pt2thm2}
Work in the situation of Definitions\/ {\rm\ref{pt2def8}} and\/ {\rm\ref{pt2def9},} and consider the graded Lie algebra $\hat H_{*+2}(\M)/D(\hat H_*(\M))$ constructed by combining Proposition\/ {\rm\ref{pt2prop1}} and Theorem\/ {\rm\ref{pt2thm1}}. Then $\Pi^\pl:\M\ra\M^\pl$ gives a morphism $\Pi^\pl_*:H_*(\M)\ab\ra H_*(\M^\pl),$ which maps $H_*(\M_\al)\ab\ra H_*(\M_\al^\pl)$ for $\al\in K_0^\sht(X)$. With the shifted gradings in \eq{pt2eq17} and\/ {\rm\eq{pt2eq18},} this maps $\hat H_{k+2}(\M)\ra\check H_k(\M^\pl)$ for $k\in\Z$. Then:
\begin{itemize}
\setlength{\itemsep}{0pt}
\setlength{\parsep}{0pt}
\item[{\bf(a)}] There is a graded Lie bracket\/ $[\,,\,]$ on $\check H_*(\M^\pl),$ constructed by Upmeier\/ {\rm\cite{Upme}} using `projective Euler classes', proving conjectures by the second author in the first version of\/ {\rm\cite{Joyc5}}. This Lie bracket has $\bigl[\check H_k(\M_\al^\pl),\check H_l(\M_\be^\pl)\bigr]\ab\subseteq \check H_{k+l}(\M_{\al+\be}^\pl)$ for $k,l\in\Z$ and $\al,\be\in K_0^\sht(X)$.
\item[{\bf(b)}] $D(\hat H_*(\M))$ lies in the kernel of\/ $\Pi^\pl_*:\hat H_{*+2}(\M)\ab\ra \check H_*(\M^\pl),$ so that\/ $\Pi^\pl_*$ descends to $\Pi^\pl_*:\hat H_{*+2}(\M)/D(\hat H_*(\M))\ab\ra\check H_*(\M^\pl)$. This is a morphism of graded Lie algebras.
\item[{\bf(c)}] If\/ $\ch\al\ne 0$ in $H^*(X,\Q)$ then\/ $\Pi^\pl_*:\hat H_{*+2}(\M_\al)/D(\hat H_*(\M_\al))\ab\ra\check H_*(\M_\al^\pl)$ is an isomorphism.
\end{itemize}
\end{thm}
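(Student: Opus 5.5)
The plan is to treat the three parts in the order (b), (c), (a), since (b) and (c) are essentially formal consequences of the $[*/\bG_m]$-action, while (a) together with the Lie-morphism half of (b) needs a genuine computation. Grading is routine bookkeeping: with the shifts \eq{pt2eq17} and \eq{pt2eq18}, $\Pi^\pl_*$ maps $\hat H_{k+2}(\M_\al)=H_{k+2-2\chi(\al,\al)}(\M_\al)$ to $\check H_k(\M^\pl_\al)=H_{k+2-2\chi(\al,\al)}(\M_\al^\pl)$ because it preserves ordinary degree.

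\textbf{Step 1: $D$ lies in the kernel of $\Pi^\pl_*$.} I would use the co-Cartesian square $\Pi^\pl\ci\Psi\simeq\Pi^\pl\ci\pi_\M$. This gives
\begin{equation*}
\Pi^\pl_*D(u)=\Pi^\pl_*\Psi_*(t\bt u)=\Pi^\pl_*(\pi_\M)_*(t\bt u).
\end{equation*}
The right-hand side vanishes, because $(\pi_\M)_*(t\bt u)=\bigl(\int t\bigr)u$ and $t\in H_2([*/\bG_m])$ pushes forward to zero in $H_2(*)=0$.

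\textbf{Step 2: part (c).} Since $\M_\al\ra\M_\al^\pl$ is a principal $[*/\bG_m]$-fibration, topologically it is a fibration with fibre $\CP^\iy$. The aim is to apply Leray--Hirsch rationally.

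For this I need a class $\xi\in H^2(\M_\al)$ restricting to a generator on the fibres. Because $\ch\al\ne0$, choose $\eta\in H^*(X,\Q)$ and $k$ with $\int_X\ch_k(\al)\cup\eta\ne0$, and take $\xi$ to be the degree-2 part of $(\Pi_\M)_*\bigl(\Pi_X^*\eta\cup\ch(\cU^\bu)\bigr)$, suitably normalized. The $\bG_m$ acts on $\cU^\bu$ with weight one, so $\ch_{k+1}(\cU^\bu)$ restricted to a fibre $[*/\bG_m]$ is $\ch_k(\al)\cdot s$, where $s$ generates $H^2(\CP^\iy)$. Hence $\xi|_{\rm fibre}$ is a nonzero multiple of $s$.

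Leray--Hirsch then gives $H_*(\M_\al)\cong H_*(\M^\pl_\al)\ot\Q[t]$, compatibly with the $[*/\bG_m]$-action. Under this isomorphism $D$ becomes multiplication by $t$ and $\Pi^\pl_*$ becomes projection onto the $t^0$ component. Therefore $\Ker\Pi^\pl_*=D(H_*(\M_\al))$ and $\Pi^\pl_*$ is surjective, which is (c).

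\textbf{Step 3: part (a) and the Lie morphism in (b).} The Ext complex $\cExt^\bu_{\al,\be}\op\si^*\cExt^\bu_{\be,\al}$ does not descend to $\M_\al^\pl\t\M_\be^\pl$. It does descend to the quotient $(\M_\al\t\M_\be)/\bG_m$, where $\bG_m$ acts on the first factor only; there $\cExt^\bu_{\al,\be}$ has weight $-1$ and $\cExt^\bu_{\be,\al}$ has weight $+1$. Over $\M^\pl_\al\t\M^\pl_\be$ this quotient is a $[*/\bG_m]$-fibration.

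I would define the bracket of $u\in\check H(\M^\pl_\al)$ and $v\in\check H(\M^\pl_\be)$ by: pulling $u\bt v$ back to this fibration, capping with the projective Euler class of Upmeier \cite{Upme}, and pushing forward along the descended direct-sum map to $\M^\pl_{\al+\be}$. Antisymmetry and the Jacobi identity would then follow from associativity and commutativity of $\Phi$, together with functorial properties of projective Euler classes under direct sums.

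For (b) I would lift classes to $H_*(\M_\al)$ and $H_*(\M_\be)$, write $u_0(v)$ as the coefficient of $z^{-1}$ in \eq{pt2eq16}, and push forward under $\Pi^\pl$. In the resulting sum over $i,j$, the terms with exponent $\chi(\al,\be)+\chi(\be,\al)-i+j=-1$ combine $\Psi$-pushforwards of $t^j$ with Chern classes $c_i$. The key point is that $\sum_j t^j\cdot c_{r+1+j}$ is exactly the formula for the projective Euler class pulled back along the $\CP^\iy$-fibration, a residue or Segre-class identity.

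This identification is the step I expect to be the main obstacle. It needs a careful description of the projective Euler class as a residue, including signs, the factor $(-1)^{\chi(\al,\be)}$, and well-definedness modulo $D$. I would first check it on a model case where everything is a product with $[*/\bG_m]$ and the Ext complex is a sum of line bundles of weight $\pm1$, and then reduce to that case by the splitting principle.
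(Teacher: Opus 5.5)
Your Steps 1 and 2 are correct. The paper itself gives no proof of this theorem: it cites \cite{Joyc5}, and \cite{Upme} for (a). Step 2 is the standard argument. Your $\xi$ is the class $e_\eta=\ch_{i+1}(\cU^\bu)\backslash\eta$ of Theorem~\ref{pt2thm3}, and Theorem~\ref{pt2thm3}(a),(b) is the algebraic form of your Leray--Hirsch splitting.

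\textbf{The gap is Step~3}, which carries all the non-formal content of (a). First, the construction you describe is not defined. There is no pullback of homology along a fibration with fibre $\CP^\iy$, so you cannot ``pull $u\bt v\in H_*(\M^\pl_\al\t\M^\pl_\be)$ back to $(\M_\al\t\M_\be)/\bG_m$ and cap''. What is needed is a wrong-way operation $H_*(\M^\pl_\al\t\M^\pl_\be)\ra H_*\bigl((\M_\al\t\M_\be)/\bG_m\bigr)$, built from the weight-one complex $(\cExt^\bu_{\al,\be})^\vee\op\si_{\al,\be}^*(\cExt^\bu_{\be,\al})$. On classes lifted from $\M_\al\t\M_\be$, where the fibration acquires a section, it must reproduce $\Pi^\pl_*$ of the $z^{-1}$-coefficient of \eq{pt2eq16}. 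Producing such an operation when there is no section is exactly what the projective Euler classes of \cite{Upme} do. A smaller slip: the Ext complex and $\Phi$ descend to the quotient by the \emph{diagonal} $\bG_m$, not the first-factor one. The first-factor $\bG_m$ is the residual fibre direction, acting with weights $\mp1$.

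Second, the part your methods can handle is already forced by (b) and (c). By Step 2, every $\check H_*(\M^\pl_\al)$ with $\ch\al\ne0$ lies in $\Im\Pi^\pl_*$. For two such classes the bracket must be $[\Pi^\pl_*\ti u,\Pi^\pl_*\ti v]=\Pi^\pl_*(\ti u_0(\ti v))$. This is independent of the lifts by Steps 1 and 2, together with $(D\ti u)_0=0$ and $\ti u_0\ci D=D\ci\ti u_0$. Once $\Pi^\pl_*$ preserves brackets, antisymmetry and Jacobi on $\Im\Pi^\pl_*$ are inherited from Proposition~\ref{pt2prop1}, not from properties of $\Phi$.

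The genuine content of (a) is the bracket, and its Jacobi identity, on classes outside $\Im\Pi^\pl_*$. These can only occur in components with $\ch\al=0$, such as $\al=0$, where the $[*/\bG_m]$-action is not free at $[0]$. This is precisely where (c) fails, cf.\ Remark~\ref{pt2rem1}(b). There lifts need not exist and there is no trivialization to reduce to. So your ``model case plus splitting principle'' has nothing to act on, and ``functorial properties of projective Euler classes under direct sums'' is the theorem itself rather than an input.

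For the Lie-morphism half of (b), your reduction is reasonable: inputs from $H_*(\M_\al)\ot H_*(\M_\be)$ trivialize the fibration after pullback. But this works only once the operation is known to exist and to be natural. As it stands, Step~3 has to be replaced by an appeal to \cite{Upme}, which is what the paper does.
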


\begin{rem}
\label{pt2rem1}
{\bf(a)} Theorem \ref{pt2thm2} shows that except in $\check H_*(\M_\al^\pl)\!\subset\!\check H_*(\M^\pl)$ with $\ch\al=0$, the graded Lie algebras $\hat H_{*+2}(\M)/D(\hat H_*(\M))$ and $\check H_*(\M^\pl)$ are isomorphic. In fact we never use $\check H_*(\M_\al^\pl)$ with $\ch\al=0$, as we are interested in enumerative invariants $[\M_\al^\ss(\tau)]_\inv\in H_*(\M_\al^\pl)$ which are only defined when $\ch\al\ne 0$. So we can work with $\hat H_{*+2}(\M)/D(\hat H_*(\M))$ rather than~$\check H_*(\M^\pl)$.
\smallskip

\noindent{\bf(b)} If we took $\M$ to be the moduli stack of objects in the abelian category $\coh(X)$ rather than the derived category $D^b\coh(X)$ then if $\ch\al=0$ we would have $\M_\al=\M_\al^\pl=\{[0]\}$ if $\al=0$ and $\M_\al=\M_\al^\pl=\es$ otherwise, and $\Pi^\pl_*:\hat H_{*+2}(\M_\al)/D(\hat H_*(\M_\al))\ab\ra\check H_*(\M_\al^\pl)$ would be an isomorphism for all $\al$ in $K_0^\sht(X)$, not just $\al$ with $\ch\al\ne 0$. The failure of the isomorphism when $\ch\al=0$ is a peculiarity of working with derived categories.
\smallskip

\noindent{\bf(c)} One might think that Lie algebras are simpler than vertex algebras. However, the Lie bracket on $\check H_*(\M^\pl)$ is a deep and complicated object. In practice, the easiest way to compute it is usually to lift to $\hat H_{*+2}(\M)$ and use vertex algebras. We discuss helpful techniques for doing this in~\S\ref{pt26}.
\end{rem}

\subsection{\texorpdfstring{Lifting calculations from $H_*(\M^\pl)$ to $H_*(\M)$}{Lifting calculations from H₊(ℳᵖˡ) to H₊(ℳ)}}
\label{pt26}

The next theorem, proved in \cite{Joyc5}, helps us to understand $H_*(\M^\pl,\Q)$ and its Lie bracket in terms of $H_*(\M,\Q)$ and its vertex algebra structure.

\begin{thm}
\label{pt2thm3}
Let\/ $X,m,K_0^\sht(X),$ $\M,\M_\al,\M^\pl,\M^\pl_\al,$ $\Pi^\pl:\M_\al\ra\M_\al^\pl,$ and\/ $H_*(\M_\al,\Q),H^*(\M_\al,\Q)$ be as in Definitions {\rm\ref{pt2def3}--\ref{pt2def4},} and\/ $\cU^\bu\ra X\t\M,$ $D:H_*(\M_\al,\Q)\ra H_{*+2}(\M_\al,\Q)$ be as in Definition\/~{\rm\ref{pt2def8}}.

Suppose $\al\in K_0^\sht(X)$ with\/ $\ch\al\ne 0$ in $H^{\rm even}(X,\Q),$ and pick\/ $i=0,\ldots,m$ and\/ $\eta\in H_{2i}(X,\Q)$ with\/ $\ch_i(\al)\cdot\eta\ne 0$ in $\Q$. Define $e_\eta=\ch_{i+1}(\cU^\bu)\backslash \eta$ in $H^2(\M_\al,\Q)$. Define $R_\al^\eta:H_*(\M_\al,\Q)\ra H_{*-2}(\M_\al,\Q)$ by $R_\al^\eta(u)=u\cap e_\eta$. Define $H_*(\M_\al,\Q)_{e_\eta=0}=\Ker(R_\al^\eta)\subset H_*(\M_\al,\Q)$. Then:
\smallskip

\noindent{\bf(a)} $[R_\al^\eta,D]=(\ch_i(\al)\cdot\eta)\,\id:H_*(\M_\al,\Q)\ra H_*(\M_\al,\Q)$.
\smallskip

\noindent{\bf(b)} For all\/ $k,l\ge 0$ with\/ $k\le l/2,$ $D^k:H_{l-2k}(\M_\al,\Q)_{e_\eta=0}\ra H_l(\M_\al,\Q)$ is injective, so that\/ $D^k\bigl(H_{l-2k}(\M_\al,\Q)_{e_\eta=0}\bigr)$ is a vector subspace of\/ $H_l(\M_\al,\Q)$ isomorphic to $H_{l-2k}(\M_\al,\Q)_{e_\eta=0}$. Furthermore
\e
H_l(\M_\al,\Q)=\bigop_{0\le k\le l/2}D^k\bigl(H_{l-2k}(\M_\al,\Q)_{e_\eta=0}\bigr).
\label{pt2eq19}
\e
Hence
\e
H_l(\M_\al,\Q)=H_l(\M_\al,\Q)_{e_\eta=0}\op D\bigl(H_{l-2}(\M_\al,\Q)\bigr).
\label{pt2eq20}
\e
Therefore, by Theorem\/ {\rm\ref{pt2thm2}(c),} we have an isomorphism
\e
\Pi^\pl_*\vert_{H_l(\M_\al,\Q)_{e_\eta=0}}:H_l(\M_\al,\Q)_{e_\eta=0}\,{\buildrel\cong\over\longra}\,H_l(\M^\pl_\al,\Q).
\label{pt2eq21}
\e
Write the inverse of\/ \eq{pt2eq21} as
\e
I_{e_\eta=0}:H_l(\M^\pl_\al,\Q)\,{\buildrel\cong\over\longra}\,H_l(\M_\al,\Q)_{e_\eta=0}.
\label{pt2eq22}
\e

\noindent{\bf(c)} Write $\Pi_{e_\eta=0}:H_l(\M_\al,\Q)\ra H_l(\M_\al,\Q)_{e_\eta=0}$ for the projection to the first factor in \eq{pt2eq20}. Then
\e
\Pi_{e_\eta=0}=\sum_{0\le k\le l/2}\frac{1}{k!(-\ch_i(\al)\cdot\eta)^k}D^k\ci (R_\al^\eta)^k.
\label{pt2eq23}
\e
Note that by combining {\bf(a)} and\/ \eq{pt2eq23} we can show that\/ $R_\al^\eta\ci\Pi_{e_\eta=0}=0,$ consistent with\/ $H_*(\M_\al,\Q)_{e_\eta=0}=\Ker(R_\al^\eta)$.
\smallskip

\noindent{\bf(d)} Suppose $\al,\be\in K_0^\sht(X)$ with\/ $\ch_i(\al)\cdot\eta,\ch_i(\be)\cdot\eta,\ch_i(\al+\be)\cdot\eta$ all nonzero, and let\/ $u\in H_a(\M_\al^\pl,\Q),$ $v\in H_b(\M_\be^\pl,\Q),$ so that\/ $[u,v]\in H_*(\M_{\al+\be}^\pl,\Q)$ using the Lie bracket from Theorem\/ {\rm\ref{pt2thm2}(a)}. Then in $H_*(\M_{\al+\be},\Q)$ we have
\ea
&I_{e_\eta=0}\bigl([u,v]\bigr)=
\label{pt2eq24}\\
&\sum_{0\le k\le (a+b)/2}\frac{1}{k!}\Bigl(\frac{-\ch_i(\al)\cdot\eta}{\ch_i(\al\!+\be)\cdot\eta}\Bigr)^kD^k\bigl[\bigl(I_{e_\eta=0}(u)\bigr)_k\bigl(I_{e_\eta=0}(v)\bigr)\bigr],
\nonumber
\ea
using the vertex algebra structure from Theorem\/ {\rm\ref{pt2thm1}}.
\smallskip

\noindent{\bf(e)} In {\bf(d)\rm,} suppose instead that\/ $\ch_i(\al)\cdot\eta\ne 0$ and $\ch_i(\be)\cdot\eta=0,$ and choose an arbitrary $v'\in H_b(\M_\be,\Q)$ with $\Pi^\pl_*(v')=v\in H_b(\M_\be^\pl,\Q)$. Then
\e
I_{e_\eta=0}\bigl([u,v]\bigr)=\sum_{0\le k\le (a+b)/2}\frac{(-1)^k}{k!}D^k\bigl[\bigl(I_{e_\eta=0}(u)\bigr)_k(v')\bigr].
\label{pt2eq25}
\e
\end{thm}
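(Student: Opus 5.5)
The plan is to derive everything from one computation, namely how the class $e_\eta$ pulls back along $\Psi$ and along $\Phi$. After that, parts (b)--(e) become formal consequences of a Heisenberg-type relation between $R^\eta_\al$ and $D$.

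\textbf{Part (a).} We have $\Psi^*(\cU^\bu)\cong \cL\bt\cU^\bu$ on $X\t[*/\bG_m]\t\M_\al$, where $\cL\ra[*/\bG_m]$ is the weight-one line bundle, since $\bG_m$ acts on objects by scaling. Write $\tau=c_1(\cL)\in H^2([*/\bG_m])$, which is dual to $t$. Then
\begin{equation*}
\ch_{i+1}(\Psi^*\cU^\bu)=\ch_{i+1}(\cU^\bu)+\tau\,\ch_i(\cU^\bu)+\tfrac12\tau^2\ch_{i-1}(\cU^\bu)+\cdots .
\end{equation*}
Slanting with $\eta\in H_{2i}(X)$ and keeping only degree $2$ gives
\begin{equation*}
\Psi^*(e_\eta)=1\bt e_\eta+(\ch_i(\al)\cdot\eta)\,\tau\bt1 .
\end{equation*}
By the projection formula,
\begin{equation*}
R^\eta_\al D(u)=\Psi_*\bigl((t\bt u)\cap\Psi^*e_\eta\bigr)=D R^\eta_\al(u)+(\ch_i(\al)\cdot\eta)\,u,
\end{equation*}
using $t\cap\tau=1$. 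This proves (a). Set $c=\ch_i(\al)\cdot\eta\ne0$.

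\textbf{Parts (b) and (c).} These are Weyl-algebra algebra, and I will do (c) first.

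1. From (a), induction gives $R D^k=D^kR+kc\,D^{k-1}$.

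2. Apply this to $P=\sum_k\frac{1}{k!(-c)^k}D^kR^k$. The sum is finite on $H_l$ because $R$ lowers degree by $2$ and $H_{<0}=0$. The terms telescope, so $R\ci P=0$.

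3. Since $P-\id\in\Im D$, every $u\in H_l$ splits as $Pu+D(\cdots)$, with $Pu\in\Ker R$.

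4. For injectivity: if $v\in\Ker R$ and $D^kv=0$, then $R^kD^kv=k!\,c^kv=0$, so $v=0$. The same identity applied to a relation $\sum_kD^kv_k=0$ with $v_k\in\Ker R$ shows the sum in \eq{pt2eq19} is direct. To see this, apply $R^{k_{\max}}$, which kills all lower terms and isolates $v_{k_{\max}}$.

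5. This gives \eq{pt2eq19} and \eq{pt2eq20}, and (c) follows. Theorem \ref{pt2thm2}(c) identifies $\Ker\Pi^\pl_*$ on $H_l(\M_\al)$ with $D(H_{l-2})$, so \eq{pt2eq21} is an isomorphism.

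\textbf{Parts (d) and (e).} Write $\tilde u=I_{e_\eta=0}(u)$ and $\tilde v=I_{e_\eta=0}(v)$.

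1. By Proposition \ref{pt2prop1} and Theorem \ref{pt2thm2}(b), $[u,v]=\Pi^\pl_*(\tilde u_0\tilde v)$. Hence $I_{e_\eta=0}([u,v])=\Pi_{e_\eta=0}(\tilde u_0\tilde v)$, with $\Pi_{e_\eta=0}$ taken in class $\al+\be$.

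2. The key lemma is a commutation rule for $R_{\al+\be}$ against the modes $a_n$. Pull back $e_\eta$ along $\Phi_{\al,\be}\ci(\Psi_\al\t\id)$, using $\Phi^*\cU^\bu=\cU^\bu\op\cU^\bu$ and the formula from (a). This gives $e_\eta\bt1+1\bt e_\eta+c_\al\,\tau\bt1\bt1$. Cap this against the expression \eq{pt2eq16}. The class $\tau$ lowers $t^j$ to $t^{j-1}$, which shifts the $z$-power and so the mode index. I expect to get
\begin{equation*}
R_{\al+\be}(a_nb)=(R_\al a)_nb+(-1)^{|a|}a_n(R_\be b)\pm n\,c_\al\,a_{n-1}b.
\end{equation*}
This is the step I expect to be the main obstacle: getting the exact sign and factor of $n$ requires care with $t^j$, the powers of $z$, and the Chern-class factor in \eq{pt2eq16}. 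As a consistency check, the rule should be compatible with $(Da)_n=-n\,a_{n-1}$ and with (a).

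3. With $R\tilde u=R\tilde v=0$, iterating the lemma gives $R^k(\tilde u_0\tilde v)=(\text{const})^k\,k!\,\tilde u_k\tilde v$.

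4. Substituting into \eq{pt2eq23} with $c=\ch_i(\al+\be)\cdot\eta$ should yield \eq{pt2eq24}, where the factor $\bigl(-c_\al/c_{\al+\be}\bigr)^k$ arises from the ratio of the constant in step 3 to $-c$.

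5. For (e), $c_\be=0$, so $\Pi_{e_\eta=0}$ is undefined on $\M_\be$ and we use an arbitrary lift $v'$. The same lemma still applies, now with $c_{\al+\be}=c_\al$, giving the coefficient $(-1)^k$. It remains to check independence of the lift. Replacing $v'$ by $v'+Dw$ changes the answer by $\Pi_{e_\eta=0}\bigl(\tilde u_0Dw\bigr)$, and $\tilde u_0Dw=D(\tilde u_0w)$ by the derivation property of $D$ (the correction term is $0\cdot\tilde u_{-1}w$). Since $\Pi_{e_\eta=0}$ kills $\Im D$, the answer does not depend on the choice of $v'$.
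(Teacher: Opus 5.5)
Parts (a)--(c) are fine. The pullback $\Psi^*e_\eta=1\bt e_\eta+(\ch_i(\al)\cdot\eta)\,\tau\bt 1$ and the projection formula give (a), with the normalization $t\cap\tau=1$ that the stated sign forces. Your Weyl-algebra argument for (b) and (c) is complete. Note that the paper does not prove this theorem itself; it quotes it from \cite{Joyc5}.

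The gap is in the key lemma for (d) and (e), and your guessed rule has the wrong shape. Capping $t^j$ with $\tau$ gives $t^{j-1}$, but the power $z^{\chi(\al,\be)+\chi(\be,\al)-i+j}$ in \eq{pt2eq16} is unchanged. After reindexing $j\mapsto j+1$, the extra term is therefore $c_\al\,z\,Y(a,z)b$: multiplication by $z$, not a derivative. Also $e_\eta$ has even degree, so there is no sign. Hence
\[
R^\eta_{\al+\be}(a_mb)=(R^\eta_\al a)_mb+a_m(R^\eta_\be b)+c_\al\,a_{m+1}b,\qquad c_\al=\ch_i(\al)\cdot\eta,
\]
which is the Heisenberg relation $[h_1,Y(a,z)]=Y(h_1a,z)+zY(h_0a,z)$. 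This rule passes the check you proposed: applying it to $Da=a_{-2}\boo$ recovers (a). A correction of the form $\pm m\,c_\al a_{m-1}b$ fails that check, lowers rather than raises the mode index, and would give $R(\tilde u_0\tilde v)=0$, contradicting your step 3. With the correct rule, $R^k(\tilde u_0\tilde v)=c_\al^k\,\tilde u_k\tilde v$, with no factor $k!$. That absence is exactly why the $1/k!$ of \eq{pt2eq23} survives into \eq{pt2eq24}; your $k!$ would cancel it and you would not reach \eq{pt2eq24}.

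For (e) you cannot rerun the iteration, because $v'$ is an arbitrary lift and $R^\eta_\be v'$ need not vanish. Since $\ch_i(\be)\cdot\eta=0$, $R^\eta_\be$ commutes with $D$, so changing the lift only changes $R^\eta_\be v'$ by elements of $\Im D$. Put $S(w)=\sum_{k\ge0}\frac{(-1)^k}{k!}D^k(\tilde u_kw)$. The corrected rule and $[R^\eta_{\al+\be},D]=c_\al$ give $R^\eta_{\al+\be}S(v')=S(R^\eta_\be v')$. More precisely,
\[
\Pi_{e_\eta=0}(\tilde u_0v')=\sum_{j\ge0}\frac{1}{j!(-c_\al)^j}D^jS\bigl((R^\eta_\be)^jv'\bigr).
\]
So \eq{pt2eq25} follows only if $S(R^\eta_\be v')=0$, for example if $R^\eta_\be v'=0$, and this can fail. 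Take $\tilde u$ a point class, $\chi(\al,\be)+\chi(\be,\al)=-1$, and $R^\eta_\be v'=\la[\mathrm{pt}]\ne0$; the last is realized by a family $E\op(G\bt\O(1))\op G[1]$ over $\CP^1$ with $\ch_i(G)\cdot\eta\ne0$. Then $S(R^\eta_\be v')=\la\,\tilde u_0[\mathrm{pt}]=\pm\la[\mathrm{pt}]\ne0$, so the right-hand side of \eq{pt2eq25} is not even in $\Ker R^\eta_{\al+\be}$.

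You therefore need the extra hypothesis $R^\eta_\be v'=0$, or the correction terms above. The hypothesis does hold in the paper's application. There $i=0$ and $\eta=1$, and the rank-zero classes can be taken supported on the locus of sheaves in $\coh_{\le1}(X)$, where $e_1$ vanishes as in the argument for \eq{pt2eq26}.

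Your lift-independence check is correct, but it only shows that $\Pi_{e_\eta=0}(\tilde u_0v')$ is independent of $v'$, which is automatic. What needed proof is that the explicit sum equals it.
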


We can use this theorem to lift calculations in the Lie algebra $H_*(\M^\pl,\Q)$ to the vertex algebra $H_*(\M,\Q)$. For $\al\in K_0^\sht(X)$ with $\ch_i(\al)\cdot\eta\ne 0$ we identify $H_l(\M^\pl_\al,\Q)$ with $H_l(\M_\al,\Q)_{e_\eta=0}$ using \eq{pt2eq21}--\eq{pt2eq22}, and then we compute Lie brackets in $H_*(\M^\pl,\Q)$ in the vertex algebra using~\eq{pt2eq24}--\eq{pt2eq25}. 

A useful case in Theorem \ref{pt2thm3} is $i=0$ and $\eta=1\in H_0(X,\Q)$, and then $\ch_i(\al)\cdot\eta=\rank\al$. Recall that in Definition \ref{pt2def5} we defined $[P_n^\alg(X,\be)]_\virt\in H_{2c_1(X)\cdot\be}(\M_{\up(1,\be,n)},\Q)$ in \eq{pt2eq10} as the image of $\ze_{\be,n}$ in \eq{pt2eq9}, and considered its pushforward $\Pi^\pl_*\bigl([P_n^\alg(X,\be)]_\virt\bigr)\in H_{2c_1(X)\cdot\be}(\M^\pl_{\up(1,\be,n)},\Q)$ in \eq{pt2eq11}. Note that $\rank\up(1,\be,n)=-1\ne 0$. We claim these classes are related by
\e
[P_n^\alg(X,\be)]_\virt=I_{e_1=0}\ci\Pi^\pl_*\bigl([P_n^\alg(X,\be)]_\virt\bigr).
\label{pt2eq26}
\e

To see this, note that the universal complex $[\O_{X\t P_n(X,\be)}\,{\buildrel\s\over\longra}\,\fF]$ on $X\t P_n(X,\be)$ has $c_1(\cdots)=0$, since $c_1(\O_{X\t P_n(X,\be)})=0$, and $c_1(\fF)=0$ as the support of $\fF$ has codimension 2. Therefore
\begin{equation*}
R^1_{\up(1,\be,n)}\bigl([P_n^\alg(X,\be)]_\virt\bigr)\!=\![P_n^\alg(X,\be)]_\virt\cap\bigl(c_1\bigl([\O_{X\t P_n(X,\be)}\,{\buildrel\s\over\longra}\,\fF]\bigr)\backslash 1\bigr)\!=\!0,
\end{equation*}
so $[P_n^\alg(X,\be)]_\virt$ lies in $\Ker(R^1_{\up(1,\be,n)})=H_{2c_1(X)\cdot\be}(\M_{\up(1,\be,n)},\Q)_{e_1=0}$, and equation \eq{pt2eq26} follows as \eq{pt2eq21} and \eq{pt2eq22} are inverse.

\subsection{Equivariant (co)homology of stacks}
\label{pt27}

Let $G$ be a linear algebraic group over $\C$ and $S$ be a projective complex manifold, or a $\C$-scheme, or an Artin $\C$-stack, or a higher $\C$-stack, with a $G$-action. Then we can define the $G$-{\it equivariant cohomology groups\/} $H_G^k(S,\Q)$ for $k=0,1,2,\ldots,$ and the $G$-{\it equivariant homology groups\/} $H^G_k(S,\Q)$ for~$k\in\Z$.

Here we must work with a particular kind of equivariant (co)homology of stacks, explained in \cite{Joyc5} and \cite{Joyc6}. If $S$ is a $\C$-stack with an action of $G$ then the equivariant cohomology $H_G^*(S,\Q)$ is just $H^*([S/G],\Q)$, the ordinary cohomology of the quotient stack~$[S/G]$. 

However, equivariant homology $H^G_*(S,\Q)$ is more complicated. We define $H^G_k(S,\Q)$ for all $k\in\Z$, not just for $k\ge 0$. It is a module over $H_G^{-*}(S,\Q)$ via the cap product $\cap:H^G_k(S,\Q)\t H_G^l(S,\Q)\ra H^G_{k-l}(S,\Q)$. We write $H_G^{-*}(S,\Q)$ here rather than $H_G^*(S,\Q)$ to make clear that cohomology gradings are subtracted from homology gradings. Since the projection $\pi:S\ra *$ induces an algebra morphism $\pi^*:H_G^*(*,\Q)\ra H_G^*(S,\Q)$, $H^G_*(S,\Q)$ is also a module over $H_G^{-*}(*,\Q)$. If the $G$-action on $S$ is trivial then
\e
H^G_k(S,\Q)\cong\prod_{j\ge 0}H^j_G(*,\Q)\ot_\Q H_{j+k}(S,\Q).
\label{pt2eq27}
\e
Thus, $H^G_*(S,\Q)$ behaves like homology for $S$, but like cohomology for~$G$.

We will study equivariant (co)homology using spectral sequences. See McCleary \cite{McCl} for a good introduction to these. There is a first quadrant cohomology spectral sequence with $E_2$ page $E_2^{p,q}=H_G^p(*,\Q)\ot_\Q H^q(S,\Q)$ converging to $H_G^{p+q}(S,\Q)$, coming from the fibration $[S/G]\ra[*/G]$ with fibre $S$. The differentials on the $k^{\rm th}$ page act as $d_k^{p,q}:E_k^{p,q}\ra E_k^{p+k,q+1-k}$. The $(k+1)^{\rm st}$ page is obtained by taking cohomology on the $k^{\rm th}$ page by
\e
E_{k+1}^{p,q}=\frac{\Ker(d_k^{p,q}:E_k^{p,q}\longra E_k^{p+k,q+1-k})}{\Im(d_k^{p-k,q-1+k}:E_k^{p-k,q-1+k}\longra E_k^{p,q})}\,.
\label{pt2eq28}
\e
They converge to the $\iy$-page $E_\iy^{p,q}$ of the spectral sequence, that is, $E_\iy^{p,q}=E_k^{p,q}$ for $k\gg 0$. Then $H^k_G(S,\Q)$ has a filtration $H^k_G(S,\Q)=F^0H^k_G(S,\Q)\supseteq F^1H^k_G(S,\Q)\supseteq\cdots$, with $\bigcap_{p\ge 0}F^pH^k_G(S,\Q)=0$, and isomorphisms for~$p\ge 0$
\begin{equation*}
\frac{F^pH^k_G(S,\Q)}{F^{p+1}H^k_G(S,\Q)}\cong E_\iy^{p,k-p}.
\end{equation*}

Similarly, there is a second quadrant homology spectral sequence with $E^2$ page $E^2_{p,q}=H_G^{-p}(*,\Q)\ot_\Q H_q(S,\Q)$ converging to $H^G_{p+q}(S)$. The differentials on the $k^{\rm th}$ page act as $d^k_{p,q}:E^k_{p,q}\ra E^k_{p-k,q-1+k}$. The $(k+1)^{\rm st}$ page is obtained by taking cohomology on the $k^{\rm th}$ page by
\e
E^{k+1}_{p,q}=\frac{\Ker(d^k_{p,q}:E^k_{p,q}\longra E^k_{p-k,q-1+k})}{\Im(d^k_{p+k,q+1-k}:E^k_{p+k,q+1-k}\longra E^k_{p,q})}\,.
\label{pt2eq29}
\e
They converge to the $\iy$-page $E^\iy_{p,q}$ of the spectral sequence, that is, $E^\iy_{p,q}=E^k_{p,q}$ for $k\gg 0$. Then $H_k^G(S)$ has a filtration $H_k^G(S,\Q)=F^0H_k^G(S,\Q)\supseteq F^{-1}H_k^G(S,\Q)\supseteq\cdots$, with $\bigcap_{p\le 0}F^pH_k^G(S,\Q)=0$, and isomorphisms for~$p\le 0$
\e
\frac{F^pH_k^G(S,\Q)}{F^{p-1}H_k^G(S,\Q)}\cong E^\iy_{p,k-p}.
\label{pt2eq30}
\e

Since $E^\iy_{p,k-p}=0$ if $k-p<0$, we see that if $k<0$ then
\e
H_k^G(S,\Q)=F^0H_k^G(S,\Q)=F^{-1}H_k^G(S,\Q)=\cdots=F^kH_k^G(S,\Q).
\label{pt2eq31}
\e

Both spectral sequences have graded actions of $H^*_G(*,\Q)$ preserving all the structure. That is, there are $\Q$-bilinear multiplication maps $H^n_G(*,\Q)\t E_k^{p,q}\ra E_k^{p+n,q}$ and $H^n_G(*,\Q)\t E^k_{p,q}\ra E^k_{p-n,q}$ for all $k=2,3,\ldots,\iy$ and $n,p,q$, which when $k=2$ come from multiplication $H^n_G(*,\Q)\t H^{\pm p}(*,\Q)\ra H^{n\pm p}(*,\Q)$ and the identity on $H^q(S,\Q)$ or $H_q(S,\Q)$. The natural multiplication $H^n_G(*,\Q)\t H^k_G(S,\Q)\ra H^{k+n}_G(S,\Q)$ maps $F^pH^k_G(S,\Q)\ra F^{p+n}H^{k+n}_G(S,\Q)$, and the natural multiplication $H^n_G(*,\Q)\t H_k^G(S,\Q)\ra H_{k-n}^G(S,\Q)$ maps $F^pH_k^G(S,\Q)\ra F^{p-n}H_{k-n}^G(S,\Q)$. These multiplication maps commute with the $d_k^{p,q},d^k_{p,q}$ and are compatible with the isomorphisms~\eq{pt2eq28}--\eq{pt2eq30}.

Many natural operations on $H^*_G(S,\Q),H^G_*(S,\Q)$ are compatible with the filtrations $(F^pH^k_G(S,\Q))_{p\ge 0},(F^pH_k^G(S,\Q))_{p\le 0}$. For example, if $\Phi:S\ra T$ is a $G$-equivariant morphism then $\Phi^*:H^k_G(T,\Q)\ra H^k_G(S,\Q)$ maps $F^pH^k_G(T,\Q)\ra F^pH^k_G(S,\Q)$ and $\Phi_*:H_k^G(S,\Q)\!\ra\! H_k^G(T,\Q)$ maps $F^pH_k^G(S,\Q)\!\ra\! F^pH_k^G(T,\Q)$. The cap product $H_k^G(S,\Q)\t H^l_G(S,\Q)\ra H_{k-l}^G(S,\Q)$ maps $F^pH_k^G(S,\Q)\t F^{p'}H^l_G(S,\Q)\ra F^{p-p'}H_{k-l}^G(S,\Q)$.

For $N\ge 0$, define the ($N$-){\it truncated\/ $G$-equivariant homology group}
\begin{equation*}
H_k^{G,\le N}(S,\Q)=\frac{H_k^G(S,\Q)}{F^{-N-1}H_k^G(S,\Q)}\,.
\end{equation*}
Then \eq{pt2eq31} implies that $H_k^{G,\le N}(S,\Q)=0$ for $k<-N$. Operations on $H_*^G(S,\Q)$ such as pushforwards and cap product with classes in $H^*_G(S,\Q)$ descend to $H_*^{G,\le N}(S,\Q)$. The definition of equivariant homology in \cite{Joyc5,Joyc6} implies that $(F^pH_k^G(S,\Q))_{p\le 0}$ is a {\it complete\/} filtration, and $H^G_*(S,\Q)$ is the inverse limit $\varprojlim_{N\ra\iy}H^{G,\le N}_*(S,\Q)$. In the action of $H^*_G(*,\Q)$ on $H_k^{G,\le N}(S,\Q)$, multiplication by $H^{>N}_G(*,\Q)$ gives zero. Thus, we can roughly think of $H_*^{G,\le N}(S,\Q)$ as being the result of killing the action of $H^{>N}_G(*,\Q)$ on $H_*^G(S,\Q)$. For example, if the $G$-action on $S$ is trivial then modifying \eq{pt2eq27}, we have
\begin{equation*}
H^{G,\le N}_k(S,\Q)\cong\bigop_{j=0}^NH^j_G(*,\Q)\ot_\Q H_{j+k}(S,\Q).
\end{equation*}

\subsection[Extension of \S\ref{pt21}--\S\ref{pt22} and \S\ref{pt24}--\S\ref{pt26} to equivariant (co)homology]{Extension of \S\ref{pt21}--\S\ref{pt22} and \S\ref{pt24}--\S\ref{pt26} to equivariant \\ (co)homology}
\label{pt28}

Let $X$ be a smooth, connected projective 3-fold over $\C$, and suppose a linear algebraic $\C$-group $G$ acts on $X$, and acts trivially on $A_1^\alg(X)$ and $H_2(X,\Z)$. For example, $X$ could be toric, and $G=\bG_m^3$. Then the material of \S\ref{pt21} on Pandharipande--Thomas classes $[P_n(X,\be)]_\virt$ and invariants $\bigl(\ts\prod_{i=1}^m\tau_{k_i}(\eta_i)\bigr)\cdot[P_n(X,\be)]_\virt$, and of \S\ref{pt22} on (co)homology of moduli stacks $\M,\M^\pl,\M_\al,\M_\al^\pl$, and of \S\ref{pt24}--\S\ref{pt26} on vertex algebra and Lie algebra structures on homology $H_*(\M,\Q)$, $H_*(\M^\pl,\Q)$, can all be promoted to ($N$-truncated) $G$-equivariant (co)homology, as in \S\ref{pt27}. We make some comments on this.
\smallskip

\noindent{\bf(a)} $G$ acts on the moduli spaces $P_n(X,\be)$ preserving the obstruction theories. Thus virtual classes are defined in $G$-equivariant homology
\begin{equation*}
[P_n(X,\be)]^G_\virt\in H_{2c_1(X)\cdot\be}^G(P_n(X,\be),\Z).
\end{equation*}
For each $N\ge 0$ there is also an $N$-truncated version
\begin{equation*}
[P_n(X,\be)]^{G,\le N}_\virt\in H_{2c_1(X)\cdot\be}^{G,\le N}(P_n(X,\be),\Z).
\end{equation*}

\noindent{\bf(b)} To define $G$-equivariant descendent insertions we take $\eta\in H^l_G(X,\Q)$ rather than $H^l(X,\Q)$, and then we define $\tau_k(\eta)\in H^{2k+l-2}_G(P_n(X,\be),\Q)$ as in~\eq{pt2eq2}.
\smallskip

\noindent{\bf(c)} For products of descendent insertions $\prod_{i=1}^m\tau_{k_i}(\eta_i)$, instead of requiring that $\sum_{i=1}^m(2k_i+l_i-2)=2c_1(X)\cdot\be$, we require only that $\sum_{i=1}^m(2k_i+l_i-2)\ge 2c_1(X)\cdot\be$. Then we can define $G$-{\it equivariant Pandharipande--Thomas invariants\/}
\begin{gather*}
PT^G_{\be,n}\bigl(\ts\prod_{i=1}^m\tau_{k_i}(\eta_i)\bigr)=\bigl(\ts\prod_{i=1}^m\tau_{k_i}(\eta_i)\bigr)\cdot[P_n(X,\be)]^G_\virt\\
\text{in $H^{\sum_{i=1}^m(2k_i+l_i-2)-2c_1(X)\cdot\be}_G(*,\Q)$,}
\end{gather*}
generalizing \eq{pt2eq3}. As for \eq{pt2eq4} we combine these into a generating function
\begin{equation*}
PT^G_\be\bigl(\ts\prod\limits_{i=1}^m\tau_{k_i}(\eta_i),q\bigr)=\sum\limits_{n\in\Z}PT^G_{\be,n}\bigl(\ts\prod\limits_{i=1}^m\tau_{k_i}(\eta_i)\bigr)q^n \quad\text{in $H^*_G(*,\Q)[[q]][q^{-1}]$.}
\end{equation*}
Equivariant Pandharipande--Thomas invariants are discussed in~\cite{Pand,PaTh1,PaTh2,PaPi1,PaPi2}.
\smallskip

\noindent{\bf(d)} $G$ acts on the moduli stacks $\M,\M_\pl$ in \S\ref{pt22}, and preserves the substacks $\M_\al,\M^\pl_\al$ (at least for $\al$ of the form $\up(r,\be,n)$). The morphism $\ze_{\be,n}$ in \eq{pt2eq9} is $G$-equivariant, so as in \eq{pt2eq10} we have an equivariant virtual class
\begin{equation*}
[P_n^\alg(X,\be)]^G_\virt\in H_{2c_1(X)\cdot\be}^G(\M_{\up(1,\be,n)},\Q).
\end{equation*}

\noindent{\bf(e)} In the situation of \S\ref{pt24}--\S\ref{pt26}, suppose $G$ is a linear algebraic group over $\C$ which acts on the smooth projective $m$-fold $X$. Then $G$ also acts on $D^b\coh(X)$ and on the moduli stacks $\M,\M^\pl$. In \cite{Joyc5} the second author generalizes the vertex algebra and Lie algebra structures on $H_*(\M),H_*(\M^\pl)$ in \S\ref{pt24}--\S\ref{pt25} to {\it equivariant\/} homology $H_*^G(\M),H_*^G(\M^\pl)$, and $H_*^{G,\le N}(\M),H_*^{G,\le N}(\M^\pl)$, as in~\S\ref{pt27}.

For each $N\ge 0$, Definition \ref{pt2def8} for the vertex algebra structure on $H_*(\M,\Q)$ extends without change to $H_*^{G,\le N}(\M,\Q)$, using truncated equivariant homology $H_*^{G,\le N}(\cdots)$ and equivariant cohomology $H^*_G(\cdots)$ throughout.

To apply the same construction to the full equivariant homology $H_*^G(\M,\Q)$, there is a subtlety. Consider the definition \eq{pt2eq16} of the vertex algebra operation $Y(u,z)v$ in $H_*^G(\M,\Q)$. Suppose $u\in H_a^G(\M_\al,\Q)$ and $v\in H_b^G(\M_\be,\Q)$. Then the coefficient of $z^k$ in $Y(u,z)v$ is
\ea
&[z^k]\bigl\{Y(u,z)v\bigr\}=
\label{pt2eq32}\\
&\sum_{i\ge\max(0,\chi(\al,\be)+\chi(\be,\al)-k)\!\!\!\!\!\!\!\!\!\!\!\!\!\!\!\!\!\!\!} \begin{aligned}[t]
&(-1)^{\chi(\al,\be)}\bigl(\Phi_{\al,\be}\ci(\Psi_\al\t\id_{\M_\be})\bigr)_*\bigl(t^{i+k-\chi(\al,\be)-\chi(\be,\al)} 
\bt{} \\
&\qquad\qquad\quad ((u \bt v) \cap c_i((\cExt_{\al,\be}^\bu)^\vee\op\si_{\al,\be}^*(\cExt^\bu_{\be,\al})))\bigr).
\end{aligned}
\nonumber
\ea
Here the term $(u \bt v)\cap c_i(\cdots)$ in the final line lies in $H^G_{a+b-2i}(\M_\al\t\M_\be,\Q)$. In non-equivariant homology this is zero if $i>(a+b)/2$, so that \eq{pt2eq32} is a finite sum. In truncated equivariant homology $H^{G,\le N}_{a+b-2i}(\M_\al\t\M_\be,\Q)$ it is zero if $i>(a+b+N)/2$, so again \eq{pt2eq32} is a finite sum. But for $H_*^G(\M,\Q)$ we may have $H^G_n(\M_\al\t\M_\be,\Q)\ne 0$ as $n\ra -\iy$, so \eq{pt2eq32} can be an infinite sum, which converges in the filtered sense using the complete filtration $(F^pH^G_n(\M_\al\t\M_\be,\Q))_{p\le 0}$ explained in~\S\ref{pt27}.

Thus, the vertex algebra operations in Definition \ref{pt2def8} are well-defined for $H_*^G(\M,\Q)$, with \eq{pt2eq16} a convergent infinite sum. However, $H_*^G(\M,\Q)$ may not be a vertex algebra in the strict sense of Definition \ref{pt2def6}, as $Y(u,z)v$ may not lie in $H_*^G(\M,\Q)[[z]][z^{-1}]$, but instead lies in the filtered completion of this, allowing powers of $z$ unbounded below; so $H_*^G(\M,\Q)$ is a `filtered graded vertex algebra'. But $H_*^{G,\le N}(\M,\Q)$ is a true graded vertex~algebra. 

Both $H_*^G(\M^\pl,\Q)$ and $H_*^{G,\le N}(\M^\pl,\Q)$ are honest graded Lie algebras, though the Lie bracket in $H_*^G(\M^\pl,\Q)$ may be a convergent infinite sum.

For the equivariant generalization of \S\ref{pt26}, in the truncated case $H_*^{G,\le N}(\cdots)$ we must replace the sum over $0\le k\le l/2$ in \eq{pt2eq19} by $0\le k\le (l+N)/2$, since $H_{l-2k}^{G,\le N}(\M_\al,\Q)=0$ if $k>(l+N)/2$. Similarly, for $H_*^{G,\le N}(\cdots)$, the upper limits on $k$ in the sums in \eq{pt2eq23}--\eq{pt2eq25} are increased by $N/2$. For $H_*^G(\cdots)$, the sums over $k$ in \eq{pt2eq19} and \eq{pt2eq23}--\eq{pt2eq25} should be over all $k\ge 0$, and they are infinite sums which converge in the filtered sense.

\subsection{Stability conditions and combinatorial coefficients}
\label{pt29}

The next definition comes from the second author \cite{Joyc3}. See also Rudakov~\cite{Ruda}.

\begin{dfn}
\label{pt2def10}
Let $\A$ be an abelian category, and $K_0(\A)$ its Grothendieck group. Suppose we are given a surjective quotient $K_0(\A)\twoheadrightarrow K(\A)$. We write $\lb E\rb\in K(\A)$ for the class of $E\in\A$. Suppose $0\in\A$ is the only object in class $0\in K(\A)$. The {\it positive cone\/} $C(\A)\subset K(\A)\sm\{0\}$ is~$C(\A)=\bigl\{\lb E\rb:0\ne E\in\A\bigr\}$.

Let $(T,\leq)$ be a totally ordered set and $\tau:C(\A)\ra T$ be a map. We call $(\tau,T,\leq)$ a {\it weak stability condition\/} on $\A$ if for all $\al,\be,\ga \in C(\A)$ with $\be=\al+\ga$, either 
$\tau(\al) \leq \tau(\be) \leq \tau(\ga)$, or~$\tau(\al) \geq \tau(\be) \geq \tau(\ga)$.

We call $(\tau,T,\leq)$ a {\it stability condition\/} if for all such $\al,\be,\ga$, either 
$\tau(\al)<\tau(\be)<\tau(\ga)$, or $\tau(\al)>\tau(\be)>\tau(\ga)$, or~$\tau(\al)=\tau(\be)=\tau(\ga)$.

Let $(\tau,T,\leq)$ be a weak stability condition. An object $E$ of $\A$ is called:
\begin{itemize}
\setlength{\itemsep}{0pt}
\setlength{\parsep}{0pt}
\item[(i)] {\it $\tau$-stable\/} if $\tau([E'])<\tau([E/E'])$ for all subobjects $E'\subset E$ with $E'\ne 0,E$.
\item[(ii)] {\it $\tau$-semistable\/} if $\tau([E'])\!\leq\!\tau([E/E'])$ for all $E'\subset E$ with $E'\ne 0,E$.
\item[(iii)] {\it $\tau$-unstable} if it is not $\tau$-semistable.
\end{itemize}
\end{dfn}

We now define universal combinatorial coefficients $S,U,\ti U(\al_1,\ldots,\al_n;\tau,\ti\tau)$ which appear in wall-crossing formulae for enumerative invariants under change of stability condition. They first appeared in the second author's series \cite{Joyc1,Joyc2,Joyc3,Joyc4} on motivic invariants counting $\tau$-(semi)stable objects in abelian categories. They were then applied to Donaldson--Thomas theory of Calabi--Yau 3-folds in Joyce--Song \cite{JoSo}, and to invariants in homology in Gross--Joyce--Tanaka \cite{GJT} and the second author \cite{Joyc6}. The next definition comes from \cite[\S 4.1]{Joyc4}, but with notation changed as in~\cite[\S 3.3]{JoSo}.

\begin{dfn}
\label{pt2def11}
Let $\A$ be an abelian category, and choose $K_0(\A)\twoheadrightarrow K(\A)$ as in Definition \ref{pt2def10}. Let
$(\tau,T,\le),(\ti\tau,\ti T,\le)$ be weak stability conditions on
$\A$.

Let $n\ge 1$ and $\al_1,\ldots,\al_n\in C(\A)$. If for all
$i=1,\ldots,n-1$ we have either
\begin{itemize}
\setlength{\itemsep}{0pt}
\setlength{\parsep}{0pt}
\item[(a)] $\tau(\al_i)\le\tau(\al_{i+1})$ and
$\ti\tau(\al_1+\cdots+\al_i)>\ti\tau(\al_{i+1}+\cdots+\al_n)$, or
\item[(b)] $\tau(\al_i)>\tau(\al_{i+1})$ and~$\ti\tau(\al_1+\cdots+\al_i)\le\ti\tau(\al_{i+1}+\cdots+\al_n)$,
\end{itemize}
then define $S(\al_1,\ldots,\al_n;\tau,\ti\tau)=(-1)^r$, where $r$ is the number of $i=1,\ldots,n-1$ satisfying (a). Otherwise define $S(\al_1,\ldots,\al_n;\tau,\ti\tau)=0$. Now
define
\begin{align*}
&U(\al_1,\ldots,\al_n;\tau,\ti\tau)=\\
&\sum_{\begin{subarray}{l} \phantom{wiggle}\\
1\le l\le m\le n,\;\> 0=a_0<a_1<\cdots<a_m=n,\;\>
0=b_0<b_1<\cdots<b_l=m:\\
\text{Define $\be_1,\ldots,\be_m\in C(\A)$ by
$\be_i=\al_{a_{i-1}+1}+\cdots+\al_{a_i}$.}\\
\text{Define $\ga_1,\ldots,\ga_l\in C(\A)$ by
$\ga_i=\be_{b_{i-1}+1}+\cdots+\be_{b_i}$.}\\
\text{We require $\tau(\be_i)=\tau(\al_j)$, $i=1,\ldots,m$,
$a_{i-1}<j\le a_i$,}\\
\text{and $\ti\tau(\ga_i)=\ti\tau(\al_1+\cdots+\al_n)$,
$i=1,\ldots,l$}
\end{subarray}
\!\!\!\!\!\!\!\!\!\!\!\!\!\!\!\!\!\!\!\!\!\!\!\!\!\!\!\!\!\!\!\!\!
\!\!\!\!\!\!\!\!\!\!\!\!\!\!\!\!\!\!\!\!\!\!\!\!\!\!\!\!\!\!\!\!\!
\!\!\!\!\!\!\!\!\!\!\!\!\!\!\!\!\!\!\!\!}
\begin{aligned}[t]
\frac{(-1)^{l-1}}{l}\cdot\prod\nolimits_{i=1}^lS(\be_{b_{i-1}+1},
\be_{b_{i-1}+2},\ldots,\be_{b_i}; \tau,\ti\tau)&\\
\cdot\prod_{i=1}^m\frac{1}{(a_i-a_{i-1})!}&\,.
\end{aligned}
\end{align*}
\end{dfn}

The next theorem is proved in \cite[Th.~5.4]{Joyc4} (see also \cite[Th.~3.14]{JoSo}). It describes a property of the coefficients $U(-;\tau,\ti\tau)$, it does not matter what $\cL$ and $\ep^\al(\tau),\ep^\al(\ti\tau)$ are. We have no explicit definition for $\ti U(\al_1,\ldots,\al_n;\tau,\ti\tau)$, we only show that \eq{pt2eq33} can be rewritten in the form~\eq{pt2eq34}.

\begin{thm}
\label{pt2thm4}
Work in the situation of Definition\/ {\rm\ref{pt2def11}}. Let\/ $\cL$ be a Lie algebra over $\Q,$ and write $U(\cL)$ for its universal enveloping algebra, with product $*$. Suppose we are given elements $\ep^\al(\tau),\ep^\al(\ti\tau)\in\cL$ for $\al\in C(\A)$ satisfying
\e
\begin{gathered}
\ep^\al(\ti\tau)= \!\!\!\!\!\!\!
\sum_{\begin{subarray}{l}n\ge 1,\;\al_1,\ldots,\al_n\in
C(\A):\\ \al_1+\cdots+\al_n=\al\end{subarray}} \!\!\!\!\!\!\!
\begin{aligned}[t]
U(\al_1,&\ldots,\al_n;\tau,\ti\tau)\cdot{}\\
&\ep^{\al_1}(\tau)*\ep^{\al_2}(\tau)*\cdots* \ep^{\al_n}(\tau)
\end{aligned}
\end{gathered}
\label{pt2eq33}
\e
for each\/ $\al\in C(\A),$ with only finitely many nonzero terms. Then\/ \eq{pt2eq33} may be rewritten as an equation in the Lie algebra $\cL$ using the Lie bracket\/ $[\,,\,]$. That is, we may rewrite \eq{pt2eq33} in the form
\e
\begin{gathered}
\ep{}^\al(\ti\tau)= \!\!\!\!\!\!\!
\sum_{\begin{subarray}{l}n\ge 1,\;\al_1,\ldots,\al_n\in
C(\A):\\ \al_1+\cdots+\al_n=\al\end{subarray}} \!\!\!\!\!\!\!
\begin{aligned}[t]
\ti U(\al_1,&\ldots,\al_n;\tau,\ti\tau)\,\cdot\\
&[[\cdots[[\ep{}^{\al_1}(\tau),\ep{}^{\al_2}(\tau)],\ep{}^{\al_3}(\tau)],
\ldots],\ep{}^{\al_n}(\tau)],
\end{aligned}
\end{gathered}
\label{pt2eq34}
\e
for some system of combinatorial coefficients\/ $\ti U(\al_1,\ldots,\al_n;\tau,\ti\tau)\in\Q,$ with only finitely many nonzero terms, such that if we expand\/ $[f,g]=f*g-g*f$ then \eq{pt2eq34} becomes \eq{pt2eq33}.
\end{thm}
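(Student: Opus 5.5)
The plan is to reduce the statement to a universal identity in a free algebra, and then to use the Friedrichs criterion together with the Dynkin--Specht--Wever lemma. Let $F$ be the free associative $\Q$-algebra on symbols $e_\al$, $\al\in C(\A)$. Grade $F$ by $C(\A)\cup\{0\}$ and also by word length, and complete it with respect to the $C(\A)$-grading. Each graded piece of fixed class $\al$ involves only finitely many words with nonzero coefficient, by the finiteness hypothesis. Give $F$ the cocommutative coproduct $\Delta e_\al=e_\al\ot 1+1\ot e_\al$, making it the universal enveloping algebra of the free Lie algebra $\mathcal F$ on the $e_\al$. Define the universal element
$$P_\al=\sum_{\al_1+\cdots+\al_n=\al}U(\al_1,\ldots,\al_n;\tau,\ti\tau)\,e_{\al_1}*\cdots*e_{\al_n}\in F.$$
For any Lie algebra $\cL$ with elements $\ep^\al(\tau)$, the Lie algebra map $\mathcal F\ra\cL$, $e_\al\mapsto\ep^\al(\tau)$, extends to $U(\mathcal F)=F\ra U(\cL)$ and sends $P_\al$ to the right-hand side of \eq{pt2eq33}. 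So it suffices to prove the following claim.

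\textbf{Claim.} Each $P_\al$ is a Lie polynomial, i.e.\ it lies in $\mathcal F$.

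The main step, and the one I expect to be the main obstacle, is proving that $P_\al$ is primitive, $\Delta P_\al=P_\al\ot 1+1\ot P_\al$. I would do this by factoring Definition~\ref{pt2def11} into three operations, each compatible with the Hopf structure.
\begin{itemize}
\item[(1)] \emph{Exponentiation within each $\tau$-slope.} The sum over the $a_i$ with weights $\prod 1/(a_i-a_{i-1})!$, requiring $\tau(\be_i)=\tau(\al_j)$, is exactly an exponential. Set $\de^\be(\tau)=\sum \frac1{m!}e_{\al_1}*\cdots*e_{\al_m}$ over decompositions of $\be$ into classes of equal $\tau$-slope. Then $1+\sum_{\tau(\be)=t}\de^\be(\tau)=\exp\bigl(\sum_{\tau(\al)=t}e_\al\bigr)$ is grouplike.
\item[(2)] \emph{The $S$-transform.} Define $\de^\ga(\ti\tau)=\sum S(\be_1,\ldots,\be_m;\tau,\ti\tau)\,\de^{\be_1}(\tau)*\cdots*\de^{\be_m}(\tau)$. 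I want $1+\sum_{\ti\tau(\ga)=s}\de^\ga(\ti\tau)$ to be grouplike for each slope $s$. The intended route is the Harder--Narasimhan interpretation of the $S$-coefficients from \cite{Joyc4}. In the identity $\prod^{\to}_{t}\bigl(1+\sum_{\tau(\be)=t}\de^\be(\tau)\bigr)=\prod^{\to}_{s}\bigl(1+\sum_{\ti\tau(\ga)=s}\de^\ga(\ti\tau)\bigr)$, the $S$-coefficients are precisely what inverts the $\ti\tau$-HN factorization of the left side. The left side is a product of grouplikes, hence grouplike. The factorization by $\ti\tau$-slope of a grouplike element into ordered factors of fixed slope is unique. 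Applying $\Delta$ and using this uniqueness then shows each factor is grouplike. This uniqueness-of-factorization argument, and checking that it matches the sign conventions in (a),(b) of Definition~\ref{pt2def11}, is where I expect the real work to lie. If it fails, the fallback is a direct inductive combinatorial check on $n$ that $\Delta$ of the $S$-sum splits as a shuffle.
\item[(3)] \emph{Logarithm within each $\ti\tau$-slope.} The sum over the $b_i$ with weight $(-1)^{l-1}/l$, requiring $\ti\tau(\ga_i)=\ti\tau(\al)$, is the logarithm of the grouplike element from (2). The logarithm of a grouplike element is primitive, and its component of class $\al$ is $P_\al$.
\end{itemize}
These manipulations are valid in the completion, because each graded component involves only finitely many terms.

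Given primitivity, Friedrichs' criterion (over characteristic zero) says $P_\al\in\mathcal F$. To get explicit coefficients, I would note that $\Delta$ preserves word length, so each word-length-$n$ component $P_\al^{(n)}$ is itself primitive. The Dynkin--Specht--Wever lemma states that $\theta(x_1\cdots x_n)=[[\cdots[x_1,x_2],\ldots],x_n]$ satisfies $\theta(Q)=nQ$ for a primitive $Q$ homogeneous of length $n$. Applying it gives
$$P_\al=\sum_n\frac1n\,\theta\bigl(P_\al^{(n)}\bigr).$$
This is of the form \eq{pt2eq34} with $\ti U(\al_1,\ldots,\al_n;\tau,\ti\tau)=\frac1n U(\al_1,\ldots,\al_n;\tau,\ti\tau)$, which has only finitely many nonzero terms for each $\al$. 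Expanding the brackets $[f,g]=f*g-g*f$ recovers \eq{pt2eq33} by construction. Pushing forward along $F\ra U(\cL)$ then completes the proof.
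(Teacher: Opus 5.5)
Your argument is correct, and it takes a genuinely different route from the paper. The paper does not prove this statement at all: it quotes \cite[Th.~5.4]{Joyc4} (see also \cite[Th.~3.14]{JoSo}) and stresses that only the existence of $\ti U$ is known, with no explicit formula.

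You instead prove directly that the universal series $P_\al$ is primitive. You realize it as the class-$\al$ part of the logarithm of the slope-$\ti\tau(\al)$ factor in the unique $\ti\tau$-ordered factorization of $\prod^{\to}_t\exp\bigl(\sum_{\tau(\be)=t}e_\be\bigr)$, and then finish with Friedrichs and Dynkin--Specht--Wever.

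The step you flag does work. Use the standard inversion $\sum_k(-1)^{k-1}\sum A_{\ga_1}\cdots A_{\ga_k}$, summed over $\ga_1+\cdots+\ga_k=\ga$ with $\ti\tau(\ga_1+\cdots+\ga_i)>\ti\tau(\ga_{i+1}+\cdots+\ga_k)$ for all $i$, and substitute the $\tau$-Harder--Narasimhan sums $A_\ga$ into it. The signed sum over cut points then factorizes position by position as $\prod_{i=1}^{m-1}\bigl([\tau(\be_i)>\tau(\be_{i+1})]-[\ti\tau(\be_1+\cdots+\be_i)>\ti\tau(\be_{i+1}+\cdots+\be_m)]\bigr)$, where $[P]\in\{0,1\}$ is the truth value of $P$. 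This is exactly $S(\be_1,\ldots,\be_m;\tau,\ti\tau)$ by (a),(b) of Definition~\ref{pt2def11}.

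Uniqueness of the factorization in the completed tensor square follows by induction on word length. Write $\ti E_s=1+\sum_{\ti\tau(\ga)=s}\de^\ga(\ti\tau)$. The weak see-saw property (classes of equal $\ti\tau$-slope are closed under addition) puts the cross terms of $\ti E_s\ot\ti E_s$ in the slope-$s$ part, so uniqueness applies.

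What your route buys is a self-contained algebraic proof with the explicit, canonical choice $\ti U=U/n$. With this choice, a step in \S\ref{pt33} becomes automatic: the authors there refine the chamber decomposition over all permutations of the inputs in order to choose $\ti U$ piecewise constant.

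One point needs correcting. The finiteness hypothesis concerns the terms of \eq{pt2eq33} after evaluation in $U(\cL)$, not the support of $U$. In general $U(\al_1,\ldots,\al_n;\tau,\ti\tau)\ne 0$ for infinitely many tuples with fixed sum. For example, take $\ac\mu^\om_\iy,\ac\mu^\om_c$ as in Definition~\ref{pt2def13} with $c\in\R$ and $\be_1,\be_2$ effective; then $U\bigl((0,\be_1,n_1),(1,\be_2,n-n_1);\ac\mu^\om_\iy,\ac\mu^\om_c\bigr)=-1$ for all $n_1\gg 0$. So $P_\al$, and the class-$\be$ parts of your exponentials, may be infinite sums, and completing by $C(\A)$-degree is not enough.

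To repair this, work in $\Q\langle\langle e_\al\rangle\rangle$, the ring of arbitrary formal sums of words. Products and $\De$ are still defined there, since each word has finitely many factorizations and each pair of words arises from finitely many words under $\De$. Apply Friedrichs and Dynkin--Specht--Wever in each piece of the grading by multisets of letters, which is finite-dimensional.

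Before mapping to $U(\cL)$, kill the $e_\be$ with $\ep^\be(\tau)=0$. Since $U(\cL)$ is a domain, the hypothesis then leaves only finitely many tuples with $U\ne 0$. This justifies the pushforward and gives the finiteness of \eq{pt2eq34} for $\ti U=U/n$.
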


\subsection{One-dimensional Donaldson--Thomas invariants}
\label{pt210}

{\it Donaldson--Thomas invariants\/} count moduli stacks $\M_\al^\ss(\tau)$ of $\tau$-semistable coherent sheaves $F$ with $\lb F\rb\!=\!\al\!\in\! K_0^\sht(X)$ on a smooth (quasi)projective 3-fold $X$. They are only defined under extra assumptions on $X,\al$. One problem in doing this is that the natural obstruction theory $\phi:\cE^\bu\ra\bL_{\M_\al^\ss(\tau)}$ on $\M_\al^\ss(\tau)$ is perfect in $[-2,1]$. To define a Behrend--Fantechi virtual class \cite{BeFa} we need it to be perfect in $[-1,1]$, and this happens only if $\Ext^3(F,F)=0$ for all $[F]\in\M_\al^\ss(\tau)$.

There are actually (at least) {\it three different kinds\/} of Donaldson--Thomas invariants, which deal with the $\Ext^3(F,F)$ terms in different ways:
\begin{itemize}
\setlength{\itemsep}{0pt}
\setlength{\parsep}{0pt}
\item[(i)] If $X$ is a Calabi--Yau 3-fold then $\Ext^3(F,F)\cong\Hom(F,F)^*$ by Serre duality. If $\M_\al^\rst(\tau)=\M_\al^\ss(\tau)$ this gives canonical isomorphisms $\Ext^3(F,F)\cong\C$ for all $[F]\in\M_\al^\ss(\tau)$, and the $\Ext^3(F,F)$ terms can be deleted from the obstruction theory. See Thomas \cite{Thom}, Joyce--Song \cite{JoSo}, and Kontsevich--Soibelman \cite{KoSo} for more details.
\item[(ii)] If $X$ is any smooth projective 3-fold and we consider moduli stacks of rank 1 torsion-free sheaves $F$ with fixed determinant $\det F=\O_X$, we can define the obstruction theory using trace-free Ext groups $\Ext^i(F,F)_0$, and $\Ext^3(F,F)_0=0$ in this case. See Maulik--Nekrasov--Okounkov--Pandharipande \cite{MNOP1,MNOP2} for more.
\item[(iii)] If $X$ is a Fano 3-fold and $\dim\al>0$, or more generally if $X,\al$ satisfy some positivity condition involving $c_1(X)$, then $\Ext^3(F,F)=0$ for all $[F]$ in $\M_\al^\ss(\tau)$, and we can define virtual classes using the natural obstruction theory. See Thomas \cite{Thom} and the second author \cite{Joyc6} for more.
\end{itemize}

We should think of these as {\it different theories}, not one theory. The dimensions of the virtual classes in (i)--(iii) are different, and for (i),(iii) the wall-crossing formulae under change of stability condition in \cite{Joyc6,JoSo} are different.

We now explain Donaldson--Thomas invariants $\bigl[\M^\ss_{(\be,n)}(\mu^\om)\bigr]_\inv$ counting {\it one-dimensional\/} $\mu^\om$-semistable coherent sheaves $F$ on a smooth projective 3-fold $X$ with $\lb F\rb=(\be,n)$, for $\be$ a {\it superpositive\/} effective curve class.  Here $\be$ superpositive is the Fano-type condition we need to define invariants of type (iii) in this case. This is part of the theory of Donaldson--Thomas invariants of type (iii) and wall-crossing formulae developed by the second author in \cite{Joyc6}, proving conjectures in Gross--Joyce--Tanaka~\cite{GJT}.

\begin{dfn}
\label{pt2def12}
Let $X$ be a smooth, connected projective 3-fold over $\C$. Write $\coh_{\le 1}(X)\subset\coh(X)$ for the abelian subcategory of coherent sheaves $F\in\coh(X)$ of dimension $\le 1$, that is, $\dim_\C\supp F\le 1$. Define an abelian group $K(\coh_{\le 1}(X))=A_1^\alg(X)\op\Z$, for $A_1^\alg(X)$ as in Definition \ref{pt1def1}. There is a surjective group morphism $K_0(\coh_{\le 1}(X))\twoheadrightarrow K(\coh_{\le 1}(X))$ mapping $[F]\mapsto\lb F\rb$ for $F\in\coh_{\le 1}(X)$, where $\lb F\rb\in A_1^\alg(X)\op\Z$ is the class of $F$ from Definition \ref{pt1def2}. If $F$ is a nonzero pure 1-dimensional sheaf with $\lb F\rb=(\be,n)$ then $\be$ is an effective curve class. If $(\be,n)\in K(\coh_{\le 1}(X))$ then $(\be,n)\in C(\coh_{\le 1}(X))$ if and only if either $\be$ is an effective curve class and $n\in\Z$, or $\be=0$ and~$n>0$.

As for $\up$ in Definition \ref{pt2def5}, there is a natural morphism $\pi:K(\coh_{\le 1}(X))\ra K_0^\sht(X)$ mapping $\lb F\rb\mapsto\lb F\rb$ for $F\in\coh_{\le 1}(X)\subset\coh(X)$. We take curve classes in $A_1^\alg(X)$ rather than $H_2(X,\Z)$ to make $\pi$ well-defined.

Suppose $\om\in H^2(X,\R)$ is the K\"ahler class on $X$. Then $\om\cdot \Pi_\alg^\hom(\be)>0$ for every effective curve class $\be\in A_1^\alg(X)$. Define the {\it slope function\/}
\e
\mu^\om:C(\coh_{\le 1}(X))\ra\R\cup\{\iy_+\}\;\text{by}\; 
\mu^\om(\be,n)=\begin{cases}\displaystyle\frac{n}{\om\cdot \Pi_\alg^\hom(\be)}, & \be\ne 0, \\
\iy_+, & \be=0,
\end{cases}
\label{pt2eq35}
\e
where $\R\cup\{\iy_+\}$ has the obvious total order with $\R<\iy_+$. Then $\mu^\om$ is a stability condition on the abelian category $\A=\coh_{\le 1}(X)$ in the sense of Definition~\ref{pt2def10}.

Since $\coh_{\le 1}(X)\subset\coh(X)\subset D^b\coh(X)$, the moduli stack $\M_{\coh_{\le 1}(X)}$ of objects in $\coh_{\le 1}(X)$ is an open substack of the moduli stack $\M$ of objects in $D^b\coh(X)$ from Definition \ref{pt2def8}, and similarly $\M_{\coh_{\le 1}(X)}^\pl\subset\M^\pl$ is open.

As in \cite{Joyc6}, for all $(\be,n)\in C(\coh_{\le 1}(X))$ there are finite type open substacks $\M_{(\be,n)}^\rst(\mu^\om)\subseteq\M_{(\be,n)}^\ss(\mu^\om)\subset\M^\pl_{\pi(\be,n)}$ parametrizing $\mu^\om$-(semi)stable sheaves $F$ in $\coh_{\le 1}(X)$ with $\lb F\rb=(\be,n)$, where $\M_{(\be,n)}^\ss(\mu^\om)$ has a proper good moduli space. If $\M_{(\be,n)}^\rst(\mu^\om)=\M_{(\be,n)}^\ss(\mu^\om)$ then $\M_{(\be,n)}^\ss(\mu^\om)$ is a proper algebraic space, considered as a stack.

Suppose now that $\be$ is a {\it superpositive\/} effective curve class, as in Definition \ref{pt1def1}. Then it is shown in \cite{Joyc6} that $\Ext^3(F,F)=0$ for all $[F]\in\M_{(\be,n)}^\ss(\mu^\om)$. Because of this, by a standard construction \cite{Joyc6} we can define a perfect obstruction theory $\phi:\cE^\bu\ra\bL_{\M_{(\be,n)}^\ss(\mu^\om)}$ on $\M_{(\be,n)}^\ss(\mu^\om)$, with $H^i(\cE^\bu\vert_{[F]})=\Ext^{1-i}(F,F)^*$ for $i\ge 1$. As $\chi((\be,n),(\be,n))=0$ this has $\rank\cE^\bu=1$. The condition $\Ext^3(F,F)=0$ ensures that $\cE^\bu$ is perfect in $[-1,1]$ rather than~$[-2,1]$. 

If $\M_{(\be,n)}^\rst(\mu^\om)=\M_{(\be,n)}^\ss(\mu^\om)$ then $\M_{(\be,n)}^\ss(\mu^\om)$ is a proper algebraic space with a perfect obstruction theory, and so has a virtual class $[\M_{(\be,n)}^\ss(\mu^\om)]_\virt$ in homology by Behrend--Fantechi \cite{BeFa}. We regard this as lying in
\e
[\M_{(\be,n)}^\ss(\mu^\om)]_\virt\in H_2(\M_{\pi(\be,n)}^\pl,\Q)=\check H_0(\M_{\pi(\be,n)}^\pl,\Q),
\label{pt2eq36}
\e 
where the dimension is 2 as $\rank\cE^\bu=1$, and in the shifted grading \eq{pt2eq18} the degree is zero as $\chi(\pi(\be,n),\pi(\be,n))=0$.
\end{dfn}

The next theorem is proved in \cite{Joyc6}.

\begin{thm}
\label{pt2thm5}
{\bf(a)} In the situation of Definition\/ {\rm\ref{pt2def12},} if\/ $n\in\Z$ and\/ $\be\in A_1^\alg(X)$ is a \begin{bfseries}superpositive\end{bfseries} effective curve class then we can define a \begin{bfseries}dimension one Donaldson--Thomas invariant\end{bfseries}
\e
\bigl[\M^\ss_{(\be,n)}(\mu^\om)\bigr]_\inv\in H_2\bigl(\M_{\pi(\be,n)}^\pl,\Q\bigr)=\check H_0(\M_{\pi(\be,n)}^\pl,\Q).
\label{pt2eq37}
\e
If\/ $\M^\rst_{(\be,n)}(\mu^\om)=\M^\ss_{(\be,n)}(\mu^\om)$ then $[\M^\ss_{(\be,n)}(\mu^\om)]_\inv=[\M^\ss_{(\be,n)}(\mu^\om)]_\virt$ in\/ \eq{pt2eq36}. If\/ $\M^\rst_{(\be,n)}(\mu^\om)\ne\M^\ss_{(\be,n)}(\mu^\om)$ then $[\M^\ss_{(\be,n)}(\mu^\om)]_\inv$ has a complicated definition in {\rm\cite[\S 5.3]{Joyc6},} involving auxiliary pair invariants. If\/ $\ti\om$ is another K\"ahler class and\/ $\M^\ss_{(\be,n)}(\mu^{\ti\om})=\M^\ss_{(\be,n)}(\mu^\om)$ then $[\M^\ss_{(\be,n)}(\mu^{\ti\om})]_\inv=[\M^\ss_{(\be,n)}(\mu^\om)]_\inv$.
\smallskip

\noindent{\bf(b)} Suppose $\ti\om\in H^2(X,\R)$ is another K\"ahler class. Then
\ea
&[\M_{(\be,n)}^\ss(\mu^{\ti\om})]_\inv=
\label{pt2eq38}\\
&\sum_{\begin{subarray}{l}k\ge 1,\;(\be_1,n_1),\ldots,(\be_k,n_k)\in
C(\coh_{\le 1}(X)):\\ 
\text{$\be_i$ superpositive,} \\
(\be_1,n_1)+\cdots+(\be_k,n_k)=(\be,n)
\end{subarray}}\begin{aligned}[t]
&\ti U((\be_1,n_1),\ldots,(\be_k,n_k);\mu^\om,\mu^{\ti\om})\cdot{}\\
&\bigl[\bigl[\cdots\bigl[[\M_{(\be_1,n_1)}^\ss(\mu^\om)]_\inv,\\
&
[\M_{(\be_2,n_2)}^\ss(\mu^\om)]_\inv\bigr],\ldots\bigr],[\M_{(\be_k,n_k)}^\ss(\mu^\om)]_\inv\bigr]
\end{aligned}
\nonumber
\ea
in the Lie algebra $\check H_0(\M^\pl)$ from Theorem\/ {\rm\ref{pt2thm2}(a)}. Here\/ $\ti U(-;\mu^\om,\mu^{\ti\om})$ is as in Theorem\/ {\rm\ref{pt2thm4},} and there are only finitely many nonzero terms in \eq{pt2eq38}.
\smallskip

\noindent{\bf(c)} Suppose a linear algebraic\/ $\C$-group\/ $G$ acts on\/ $X,$ trivially on $A_1^\alg(X)$. Then the analogues of\/ {\bf(a)\rm,\bf(b)} hold in $G$-equivariant homology $H^G_2(\M_{\pi(\be,n)}^\pl,\Q),$ and in $H^{G,\le N}_2(\M_{\pi(\be,n)}^\pl,\Q)$ for all\/ $N\ge 0$. We write the corresponding invariants as $[\M^\ss_{(\be,n)}(\mu^\om)]^G_\inv$ and\/~$[\M^\ss_{(\be,n)}(\mu^\om)]^{G,\le N}_\inv$.
\end{thm}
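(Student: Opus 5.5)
The plan is to deduce Theorem \ref{pt2thm5} from the general machinery of \cite{Joyc6}, which constructs invariants $[\M_\al^\ss(\tau)]_\inv\in\check H_*(\M^\pl_\al)$ and proves wall-crossing formulae of the form \eq{pt2eq38} for an abelian category $\A$ with moduli stack of objects and a family of (weak) stability conditions. This works provided a list of hypotheses holds. The main hypothesis is that the natural obstruction theory on every moduli stack that occurs is perfect in $[-1,1]$. Concretely, we need $\Ext^3(E,F)=0$ for all $\tau$-semistable $E,F$ that appear, including those entering auxiliary constructions. So the proof reduces to checking these hypotheses for $\A=\coh_{\le 1}(X)$, $K(\A)=A_1^\alg(X)\op\Z$, and the slope stability conditions $\mu^\om$ restricted to classes $(\be,n)$ with $\be$ superpositive.

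\textbf{Step 1: $\Ext^3$-vanishing.} By Serre duality, $\Ext^3(E,F)\cong\Hom(F,E\ot K_X)^*$. If $E,F$ are $\mu^\om$-semistable, pure one-dimensional, of classes $(\be_1,n_1),(\be_2,n_2)$ with $\mu^\om(\be_1,n_1)=\mu^\om(\be_2,n_2)$, then $E\ot K_X$ is semistable of class $(\be_1,n_1-c_1(X)\cdot\be_1)$. Since $\be_1$ is positive, this has strictly smaller slope, so $\Hom(F,E\ot K_X)=0$. Superpositivity of $\be$ is exactly what guarantees this for every class $\be_i$ appearing in a Harder--Narasimhan or Jordan--H\"older decomposition, or in the sum \eq{pt2eq38}, since every such $\be_i$ is a factor of $\be$ and hence positive. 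Zero-dimensional pieces with $\be_i=0$ have slope $\iy_+$ and cannot appear among sheaves of finite slope with $\be\ne 0$. Along a path of K\"ahler classes from $\om$ to $\ti\om$, the walls for a fixed $(\be,n)$ are finite, because $\be$ has finitely many factors. I will also check the boundedness and finiteness conditions of \cite{Joyc6} (finite type moduli, finitely many nonzero terms), using the finiteness of the set of factors and boundedness of semistable one-dimensional sheaves of fixed class.

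\textbf{Step 2: definition in (a).} When $\M^\rst=\M^\ss$, the stack $\M^\ss_{(\be,n)}(\mu^\om)$ is a proper algebraic space with perfect obstruction theory of rank $1$, and the invariant is its Behrend--Fantechi class; this gives \eq{pt2eq36}. In general, I would follow \cite[\S 5.3]{Joyc6}. Form auxiliary moduli of pairs (a sheaf plus a framing, e.g.\ $\O_X(-N)^{\op k}\ra F$ for $N\gg0$), which are stable and proper with perfect obstruction theory by Step 1. Then define $[\M^\ss_{(\be,n)}(\mu^\om)]_\inv$ by the formula expressing pair classes via Lie brackets and $\ti U$-type coefficients. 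The key point to prove is independence of the auxiliary choices, which again uses the wall-crossing of pair invariants. The last clause of (a) is then automatic: the construction depends only on the open substack $\M^\ss_{(\be,n)}$ and its obstruction theory, not on $\om$ itself.

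\textbf{Step 3: wall-crossing (b) and equivariance (c).} For (b), I apply the general wall-crossing theorem of \cite{Joyc6}, proved by master-space/$\bG_m$-localization arguments for the pair moduli, to the path $\om_t$. This yields \eq{pt2eq33} in the universal enveloping algebra, which Theorem \ref{pt2thm4} converts into the Lie bracket form \eq{pt2eq38}. For (c), all constructions are $G$-equivariant, since $G$ fixes $A_1^\alg(X)$ and hence preserves $\mu^\om$ and the moduli. So the virtual classes and localization arguments work in $H^G_*$ and $H^{G,\le N}_*$ as in \S\ref{pt28}.

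I expect the main obstacle to be Step 2's independence of auxiliary data in the strictly semistable case. This requires controlling $\Ext^3$ vanishing also for the pair objects, and carrying out the master-space localization in homology of $\M^\pl$; that is the technical heart of \cite{Joyc6}.
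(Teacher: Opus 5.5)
Your route matches the paper's, which gives no proof of Theorem \ref{pt2thm5} beyond attributing (a)--(c) to \cite{Joyc6}. The input specific to this setting is that superpositivity forces $\Ext^3(F,F)=0$ on $\M^\ss_{(\be,n)}(\mu^\om)$, so the natural obstruction theories are perfect in $[-1,1]$. However, your Step 1 argument for this contains a false step: $E\ot K_X$ need not be $\mu^\om$-semistable. Tensoring by a line bundle $M$ shifts the slope of a subsheaf of class $(\ga,m)$ by $c_1(M)\cdot\ga/(\om\cdot\ga)$. This shift depends on $\ga$ unless $c_1(M)$ is proportional to $\om$, which is exactly why \S\ref{pt31} takes $\om=c_1(L)$ before using $F\mapsto L\ot F$. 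For example, let $E=E_1\op E_2$ with $E_i$ stable of equal slope and classes $\ga_i$ satisfying $c_1(X)\cdot\ga_1/(\om\cdot\ga_1)\ne c_1(X)\cdot\ga_2/(\om\cdot\ga_2)$. Then $E\ot K_X$ is unstable, so you cannot conclude by comparing the slopes of two semistable sheaves.

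The vanishing is still true if you argue with the image instead. Suppose $0\ne\phi:F\ra E\ot K_X$ has image $I$ of class $(\ga,m)$.
\begin{itemize}
\item $\ga\ne 0$, since $E\ot K_X$ is pure.
\item $\mu^\om(I)\ge\mu^\om(F)$, since $I$ is a quotient of the semistable sheaf $F$.
\item $I\ot K_X^{-1}\subset E$ has slope $\mu^\om(I)+c_1(X)\cdot\ga/(\om\cdot\ga)>\mu^\om(F)=\mu^\om(E)$, provided $c_1(X)\cdot\ga>0$.
\end{itemize}
This contradicts semistability of $E$. The class $\ga$ is a factor of the class of $F$, hence of $\be$, so its positivity is exactly what superpositivity supplies. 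Note that it is the class of the image that must be positive, not just $\be_1$. This is needed even for $\Ext^3(F,F)$ with $F$ of class $(\be,n)$ itself, with no wall-crossing involved, and it is the real reason the hypothesis is superpositivity rather than positivity. With this repair, the rest of your outline is a reasonable sketch of what \cite{Joyc6} is cited for. Nothing further is proved in this paper to compare it against.
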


An illustration of the difference between the Calabi--Yau and Fano cases (i),(iii) above is that Calabi--Yau Donaldson--Thomas invariants have dimension 0, so they lie in $\Z$ or $\Q$, but the Fano-type curve-counting invariants in Theorem \ref{pt2thm5} have real dimension 2, and lie in $H_2(\M_{(\be,n)}^\pl,\Q)$.

If we wish to lift $[\M^\ss_{(\be,n)}(\mu^\om)]_\inv$ from $H_2(\M_{\pi(\be,n)}^\pl,\Q)$ to $H_2(\M_{\pi(\be,n)},\Q)$ using the ideas of \S\ref{pt26}, we take $\om'\in H^2(X,\Q)$ to be a rational K\"ahler class on $X$ and choose $\eta=\PD(\om')\in H_4(X,\Q)$. If $\be$ is a superpositive curve class then $\ch_2(\pi(\be,n))\cdot\eta=\om'\cdot\be>0$, so Theorem \ref{pt2thm3} defines
\e
I_{e_\eta=0}\bigl([\M^\ss_{(\be,n)}(\mu^\om)]_\inv\bigr)\in H_2(\M_{\pi(\be,n)},\Q)_{e_\eta=0}\subset H_2(\M_{\pi(\be,n)},\Q).
\label{pt2eq39}
\e

\subsection{\texorpdfstring{An identity relating Pandharipande--Thomas and \\ one-dimensional Donaldson--Thomas invariants}{An identity relating Pandharipande--Thomas and one-dimensional Donaldson--Thomas invariants}}
\label{pt211}

Finally we give identities \eq{pt2eq40}--\eq{pt2eq42} relating Pandharipande--Thomas and one-dimensional Donaldson--Thomas invariants,  proved in \cite{Joyc6}, which we will use in \S\ref{pt3} to prove Theorem \ref{pt1thm2}. The method is similar to that used by Bridgeland \cite{Brid} and Toda \cite{Toda3,Toda4} to prove Conjecture \ref{pt1conj1} for Calabi--Yau 3-folds. The exact relation will be explained in Remark~\ref{pt2rem2}.

\begin{dfn}
\label{pt2def13}
Let $X$ be a smooth, connected projective 3-fold over $\C$. Define an abelian category $\acA$ to have {\it objects\/} $(F,V,\rho)$ where $F\in\coh_{\le 1}(X)$, $V$ is a finite-dimensional $\C$-vector space, and $\rho:V\ot_\C\O_X\ra F$ is a morphism in $\coh(X)$. If $(F,V,\rho),(F',V',\rho')$ are objects in $\acA$, a {\it morphism\/} $(\th,\phi):(F,V,\rho)\ra(F',V',\rho')$ consists of a morphism $\th:F\ra F'$ in $\coh(X)$ and a $\C$-linear map $\phi:V\ra V'$ such that the following commutes in $\coh(X)$:
\begin{equation*}
\xymatrix@C=100pt@R=15pt{ *+[r]{V\ot_\C\O_X} \ar[r]_{\phi\ot\id_{\O_X}} \ar[d]^\rho & *+[l]{V'\ot_\C\O_X} \ar[d]_{\rho'} \\ *+[r]{F} \ar[r]^\th & *+[l]{F'.\!} }	
\end{equation*}
Define composition of morphisms, and identities $(\id_F,\id_V)$, in the obvious way.

Define an abelian group $K(\acA)=\Z\op A_1^\alg(X)\op\Z$. Elements of $K(\acA)$ will be written $(d,\be,n)$. If $(F,V,\rho)\in\acA$, the {\it class\/} of $(F,V,\rho)$ is $\lb F,V,\rho\rb=(d,\be,n)$ where $d=\dim_\C V$ and $\lb F\rb=(\be,n)$ is as in Definition \ref{pt1def2}. This extends to a surjective group morphism $K_0(\acA)\twoheadrightarrow K(\acA)$ mapping $[F,V,\rho]\mapsto\lb F,V,\rho\rb$.

Define a functor $\ac\Pi:\acA\ra D^b\coh(X)$ to map $(F,V,\rho)$ to the complex $V\ot_\C\O_X\,{\buildrel\rho\over\longra}\,F$ with $V\ot_\C\O_X$ in degree $-1$ and $F$ in degree 0. This is a full and faithful functor which embeds $\acA$ as an abelian subcategory of $D^b\coh(X)$. The morphism $\up:\Z\op A_1^\alg(X)\op\Z\ra K_0^\sht(X)$ in Definition \ref{pt2def5} was defined to satisfy $\up(\lb F,V,\rho\rb)=\lb\ac\Pi(F,V,\rho)\rb$ for all~$(F,V,\rho)\in\acA$.

Let $\om\in H^2(X,\R)$ be a K\"ahler class on $X$. Let $\R\cup\{\iy,\iy_+\}$ have the obvious total order with $\R<\iy<\iy_+$. Fix $c\in\R\cup\{\iy\}$. Define
\begin{equation*}
\ac\mu^\om_c:C(\acA)\longra\R\cup\{\iy,\iy_+\}\quad\text{by}\quad 
\ac\mu^\om_c(d,\be,n)=\begin{cases}\mu^\om(\be,n), & d=0, \\
c, & d>0.
\end{cases}
\end{equation*}
Then $\ac\mu^\om_c$ is a weak stability condition on $\acA$ in the sense of \S\ref{pt29}. Note that we have two `infinities' $\iy<\iy_+$, where $\ac\mu^\om_c(0,0,n)=\iy_+$ for $n>0$ is the slope of a dimension 0 sheaf, and $\ac\mu^\om_\iy(d,\be,n)=\iy$ for~$d>0$.

In \cite{Joyc6} the second author shows that for $(d,\be,n)\in C(\acA)$ with $d=0,1$ there are finite type open moduli substacks $\M^\rst_{(d,\be,n)}(\ac\mu^\om_c)\subseteq\M^\ss_{(d,\be,n)}(\ac\mu^\om_c)\subseteq \M^\pl_{\up(d,\be,n)}$ parametrizing $\ac\mu^\om_c$-(semi)stable objects $(F,V,\rho)\in\acA$ with $\lb F,V,\rho\rb=(d,\be,n)$.

Take $\be$ to be an effective curve class and $d=1$. Then for $[F,V,\rho]\in \M^\ss_{(1,\be,n)}(\ac\mu^\om_c)$ we may take $V=\C$, so that $\rho:\O_X\ra F$. We find that:
\begin{itemize}
\setlength{\itemsep}{0pt}
\setlength{\parsep}{0pt}
\item[(i)] If $c<\mu^\om(\be,n)$ then $\M^\ss_{(1,\be,n)}(\ac\mu^\om_c)=\es$, since any $(F,\C,\rho)$ in class $(1,\be,n)$ is $\ac\mu^\om_c$-destabilized by the subobject $(F,0,0)\subset(F,\C,\rho)$.
\item[(ii)] $(F,\C,\rho)$ in class $(1,\be,n)$ is $\ac\mu^\om_{\iy}$-semistable if and only if $(F,\rho)$ is a Pand\-hari\-pan\-de--Thomas stable pair, in the sense of Definition \ref{pt1def2}. Thus we have an isomorphism~$\M^\ss_{(1,\be,n)}(\ac\mu^\om_\iy)\cong P^\alg_n(X,\be)$.
\end{itemize}
\end{dfn}

The next theorem is proved in \cite{Joyc6}.

\begin{thm}
\label{pt2thm6}
Work in the situation of Definition\/ {\rm\ref{pt2def13},} and let\/ $\be$ be a superpositive effective curve class on $X$. Then there exists\/ $C_\be\in\R$ such that:

\smallskip

\noindent{\bf(a)} If\/ $n\in\Z$ with\/ $\mu^\om(\be,n)>C_\be,$ and we choose small\/ $\ep>0$ in $\Q,$ then
\ea
\label{pt2eq40}\\[-17pt]
&0=\sum_{\begin{subarray}{l}
1\le j\le k, \\
\be=\be_1+\cdots+\be_k,  \\
n=n_1+\cdots+n_k,\\ 
\text{$\be_i$ effective and} \\
\text{$n_i\in\Z,$ $i\ne j,$} \\
\text{either $\be_j$ effective}\\
\text{and $n_j\in\Z,$}\\
\text{or $(\be_j,n_j)=(0,0)$}
\end{subarray}}\,\,
\begin{aligned}[t]&
\ti U\bigl((0,\be_1,n_1),\ldots,(0,\be_{j-1},n_{j-1}),(1,\be_j,n_j),\\
&(0,\be_{j+1},n_{j+1}),\ldots,(0,\be_k,n_k);\ac\mu^\om_\iy,\ac\mu^\om_{\mu^\om(\be,n)-\ep}\bigr)\cdot \\
&\bigl[\bigl[\cdots\bigl[[\M^\ss_{(\be_1,n_1)}(\mu^\om)]_\inv,[\M^\ss_{(\be_2,n_2)}(\mu^\om)]_\inv\bigr],\ldots,\\
&[\M^\ss_{(\be_{j-1},n_{j-1})}(\mu^\om)]_\inv\bigr],\Pi^\pl_*([P^\alg_{n_j}(X,\be_j)]_\virt)\bigr],\\
&[\M^\ss_{(\be_{j+1},n_{j+1})}(\mu^\om)]_\inv\bigr],\ldots,[\M^\ss_{(\be_k,n_k)}(\mu^\om)]_\inv\bigr].
\end{aligned}
\nonumber
\ea
Here\/ $\Pi^\pl_*\bigl([P^\alg_n(X,\be)]_\virt\bigr),[\M^\ss_{(\be_i,n_i)}(\mu^\om)]_\inv$ are as in {\rm\eq{pt2eq11}, \eq{pt2eq37},} and\/ $\ti U(\cdots)$ is as in Theorem\/ {\rm\ref{pt2thm4},} and the Lie brackets are in the Lie algebra $\check H_{\rm even}(\M^\pl)$ in Theorem\/ {\rm\ref{pt2thm2}(a)}. There are only finitely many nonzero terms in\/~\eq{pt2eq40}.
\smallskip

\noindent{\bf(b)} Since $\ti U\bigl((1,\be,n);\ac\mu^\om_\iy,\ac\mu^\om_{\mu^\om(\be,n)-\ep}\bigr)=1,$ by dividing \eq{pt2eq40} into terms with\/ $k=1$ and\/ $k\ge 2$ we see that provided\/ $\mu^\om(\be,n)>C_\be$ we have
\ea
&\Pi^\pl_*\bigl([P^\alg_n(X,\be)]_\virt\bigr)=
\label{pt2eq41}\\
&-\sum_{\begin{subarray}{l}
1\le j\le k,\; k\ge 2, \\
\be=\be_1+\cdots+\be_k,  \\
n=n_1+\cdots+n_k,\\ 
\text{$\be_i$ effective and} \\
\text{$n_i\in\Z,$ $i\ne j,$} \\
\text{either $\be_j$ effective}\\
\text{and $n_j\in\Z,$}\\
\text{or $(\be_j,n_j)=(0,0)$}
\end{subarray}}\,\,
\begin{aligned}[t]&
\ti U\bigl((0,\be_1,n_1),\ldots,(0,\be_{j-1},n_{j-1}),(1,\be_j,n_j),\\
&(0,\be_{j+1},n_{j+1}),\ldots,(0,\be_k,n_k);\ac\mu^\om_\iy,\ac\mu^\om_{\mu^\om(\be,n)-\ep}\bigr)\cdot \\
&\bigl[\bigl[\cdots\bigl[[\M^\ss_{(\be_1,n_1)}(\mu^\om)]_\inv,[\M^\ss_{(\be_2,n_2)}(\mu^\om)]_\inv\bigr],\ldots,\\
&[\M^\ss_{(\be_{j-1},n_{j-1})}(\mu^\om)]_\inv\bigr],\Pi^\pl_*([P^\alg_{n_j}(X,\be_j)]_\virt)\bigr],\\
&[\M^\ss_{(\be_{j+1},n_{j+1})}(\mu^\om)]_\inv\bigr],\ldots,[\M^\ss_{(\be_k,n_k)}(\mu^\om)]_\inv\bigr].
\end{aligned}
\nonumber
\ea
This writes Pandharipande--Thomas classes\/ $\Pi^\pl_*\bigl([P^\alg_n(X,\be)]_\virt\bigr)$ in terms of one-dim\-en\-sional Donaldson--Thomas invariants \sloppy $[\M^\ss_{(\be_i,n_i)}(\mu^\om)]_\inv$ and `lower down' Pan\-dhari\-pande--Thomas classes $\Pi^\pl_*\bigl([P^\alg_{n_j}(X,\be_j)]_\virt\bigr)$.

Note that although\/ \eq{pt2eq41} makes sense for arbitrary $(\be,n)\in C(\coh_{\le 1}(X))$ with\/ $\be$ superpositive, we prove it only when\/ $\mu^\om(\be,n)>C_\be,$ that is, when\/ $n>C_\be\,\om\cdot \Pi_\alg^\hom(\be)$. In general equation\/ \eq{pt2eq41} will \begin{bfseries}not\end{bfseries} hold for all\/~$n\in\Z$.
\smallskip

\noindent{\bf(c)} If\/ $\be$ is an \begin{bfseries}irreducible\end{bfseries} curve class then \eq{pt2eq41} reduces to
\e
\Pi^\pl_*\bigl([P^\alg_n(X,\be)]_\virt\bigr)=\bigl[[\M^\ss_{(\be,n)}(\mu^\om)]_\inv,\Pi^\pl_*([P^\alg_0(X,0)]_\virt)\bigr]
\label{pt2eq42}
\e
where $\Pi^\pl_*\bigl([P^\alg_0(X,0)]_\virt\bigr)\!\in\!H_0(\M_{\up(1,0,0)}^\pl)\!\cong\!\Q$ is the class of the point\/~$[0,\C,0]$.

\smallskip

\noindent{\bf(d)} Suppose a linear algebraic\/ $\C$-group\/ $G$ acts on\/ $X,$ trivially on $H_2(X,\Z)$. Then the analogues of\/ {\bf(a)\rm--\bf(c)} hold in $G$-equivariant homology $H^G_*(\M^\pl,\Q),$ and in truncated\/ $G$-equivariant homology $H^{G,\le N}_*(\M^\pl,\Q)$ for all\/~$N\ge 0$.\end{thm}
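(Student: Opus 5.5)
The plan is to deduce Theorem \ref{pt2thm6} from the general wall-crossing theorem of \cite{Joyc6} for invariants counting semistable objects in an abelian category with a perfect obstruction theory. We apply it to the category $\acA$ of Definition \ref{pt2def13}, in class $(1,\be,n)$, along the one-parameter family of weak stability conditions $\ac\mu^\om_c$. The parameter $c$ runs from $c=\iy$ down to $c=\mu^\om(\be,n)-\ep$.

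At the endpoint $c=\iy$, Definition \ref{pt2def13}(ii) identifies $\M^\ss_{(1,\be',n')}(\ac\mu^\om_\iy)\cong P^\alg_{n'}(X,\be')$ for every class with $d=1$. So the pair invariant there is $\Pi^\pl_*([P^\alg_{n'}(X,\be')]_\virt)$, once we check that the obstruction theory from \cite{Joyc6} agrees with the trace-free Pandharipande--Thomas obstruction theory. At the endpoint $c=\mu^\om(\be,n)-\ep$, Definition \ref{pt2def13}(i) shows the moduli stack in class $(1,\be,n)$ is empty, so the invariant vanishes. In classes with $d=0$, $\ac\mu^\om_c$ restricts to $\mu^\om$ on $\coh_{\le 1}(X)$ for every $c$, so those invariants are the classes $[\M^\ss_{(\be_i,n_i)}(\mu^\om)]_\inv$ of Theorem \ref{pt2thm5}.

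Substituting these identifications into the wall-crossing formula in the Lie algebra $\check H_*(\M^\pl)$ of Theorem \ref{pt2thm2}(a), written as in \eq{pt2eq34} via Theorem \ref{pt2thm4}, gives \eq{pt2eq40}. Since each class in \eq{pt2eq40} has total $d=1$, exactly one entry has $d=1$ and the rest have $d=0$, which produces the indexing by $j$. Part (b) is then immediate: the $k=1$ term has coefficient $\ti U=1$, and we move the $k\ge2$ terms to the other side.

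For (c), take $\be$ irreducible. The effective decompositions are then $\be=\be$ together with zero classes, and zero-class $d=0$ terms have $\mu^\om=\iy_+$. I would check directly from Definition \ref{pt2def11} that the $\ti U$ coefficients vanish except for two configurations: the single term $((0,\be,n),(1,0,0))$, and the trivial $k=1$ term. I would also check that the surviving coefficient is $-1$, which produces \eq{pt2eq42} after the sign from (b). Part (d) follows by running the same argument in equivariant and $N$-truncated equivariant homology, which \cite{Joyc6} supports, as in Theorem \ref{pt2thm5}(c).

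The main obstacle is verifying the hypotheses of the general theorem of \cite{Joyc6} uniformly for $c\in[\mu^\om(\be,n)-\ep,\iy]$. Three things are needed. First, the moduli stacks $\M^\ss_{(1,\be',n')}(\ac\mu^\om_c)$ must be of finite type with proper good moduli spaces. Second, only finitely many walls and finitely many classes may contribute. Third, the natural obstruction theories must be perfect in $[-1,1]$, i.e.\ the $\Ext^3$-type groups of the pair complexes $\O_X\ra F$ must vanish. This third point is where $C_\be$ enters.

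By Serre duality the relevant groups reduce to terms like $\Hom(F,F\ot K_X)$ and $\Hom(F,K_X)$-type pieces, for $F$ built from $\mu^\om$-semistable factors with $\be_i$ superpositive. Superpositivity handles the sheaf-to-sheaf terms, as in Theorem \ref{pt2thm5}. The terms involving $\O_X$ I would try to kill by slope arguments, taking $\mu^\om(\be,n)$ large relative to a bound depending only on the finitely many factors of $\be$ and on $K_X$. I would then define $C_\be$ as the maximum of these finitely many bounds. I expect the hardest part to be making this bound uniform over all intermediate $c$ and over all sub-classes $(1,\be_j,n_j)$ appearing in the sum.
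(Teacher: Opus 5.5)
Your outer structure is the route the paper indicates in Remark~\ref{pt2rem2}(a); the paper defers the proof itself to \cite{Joyc6}. You wall-cross in $\acA$ from $\ac\mu^\om_\iy$ to $\ac\mu^\om_{\mu^\om(\be,n)-\ep}$, identify the two ends by Definition~\ref{pt2def13}(i),(ii), and split off $k=1$ for (b). The gap is in the one step with real content, namely what $C_\be$ is for.

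You attribute $C_\be$ to making the obstruction theories perfect in $[-1,1]$, by using slope bounds to kill $\Ext^3$-type groups involving $\O_X$. That is not where the problem lies. For a pair $E=[\O_X\,{\buildrel\rho\over\longra}\,F]$ with $F$ one-dimensional, the cross terms vanish automatically: $\Ext^3(\O_X[1],F)=H^2(F)=0$ and $\Ext^3(F,\O_X[1])=\Ext^4(F,\O_X)=0$. Two pieces remain:
\begin{itemize}
\item $H^3(\O_X)$, which is removed by the fixed-determinant (trace-free) convention;
\item the cokernel of $\Ext^3(F,\O_X)\ra\Ext^3(F,F)$ induced by $\rho$, whose dual is $\{\phi\in\Hom(F,F\ot K_X):\phi\ci\rho=0\}$.
\end{itemize}
The second piece vanishes for $\ac\mu^\om_c$-semistable pairs by superpositivity and semistability alone, as in the sheaf case, with no large-slope hypothesis. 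Conversely, the groups you name cannot be killed by slope. $\Hom(F,K_X)=0$ trivially, while $\Ext^3(F,\O_X)\cong H^0(F\ot K_X)^*$ grows rather than vanishes as $\mu^\om(F)$ increases.

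What $C_\be$ actually handles (Remark~\ref{pt2rem2}(b)) is a degree-one problem: $\ac\Pi_*:\M^\pl_\acA\ra\M^\pl$ is not an open inclusion. Consider an object such as $(F,\C,0)$, or a direct sum $(F_1,0,0)\op(F_2,\C,\rho_2)$ on a wall. It has first-order deformations in $D^b\coh(X)$ coming from $\Ext^1(F,\O_X[1])=\Ext^2(F,\O_X)\cong H^1(F\ot K_X)^*$. These leave $\acA$: they point towards complexes with $H^{-1}=\O_X$, $H^0=F$ and nonzero extension class in $\Ext^2(F,\O_X)$, which are not of the form $[\O_X\ra F]$.

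The invariants must be built from the pullback of the obstruction theory on $\M^\pl$, since this is what makes the $c=\iy$ term the Pandharipande--Thomas class. That pullback is an obstruction theory only where $\ac\Pi_*$ is an open inclusion, because elsewhere the tangent spaces differ. So one needs $\ac\Pi_*$ to be open on $\M^\ss_{(1,\be',n')}(\ac\mu^\om_c)$ for every $c\in[\mu^\om(\be,n)-\ep,\iy]$. $C_\be$ is chosen so that every $\mu^\om$-semistable sheaf $F$ entering the wall-crossing has $H^1(F\ot K_X)=0$; such $F$ have slope at least $\mu^\om(\be,n)-\ep$ and class among the finitely many factors of $\be$. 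Your proposal misses this non-openness, so the hypothesis check you outline targets the wrong condition.

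A smaller point in (c): the $\ti U$ are not unique. Definition~\ref{pt2def11} gives $U((0,\be,n),(1,0,0))=-1$ and $U((1,0,0),(0,\be,n))=+1$, so the reversed ordering does not drop out. The two terms combine in \eq{pt2eq33} to $-[\ep^{(0,\be,n)}(\ac\mu^\om_\iy),\ep^{(1,0,0)}(\ac\mu^\om_\iy)]$, and you then choose $\ti U$ to match.
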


\begin{rem}
\label{pt2rem2}
{\bf(a)} Equation \eq{pt2eq40} is obtained by a wall-crossing formula in $\acA$ similar to \eq{pt2eq38}, changing from $\ac\mu^\om_\iy$-stability on the right hand side to $\ac\mu^\om_{\mu^\om(\be,n)-\ep}$-stability on the left hand side. As $\mu^\om(\be,n)-\ep<\mu^\om(\be,n)$ we have $\M^\ss_{(1,\be,n)}(\ac\mu^\om_{\mu^\om(\be,n)-\ep})=\es$, as in Definition \ref{pt2def13}(i), which is why the left hand side is zero. On the right hand side we have $\M^\ss_{(1,\be_j,n_j)}(\ac\mu^\om_\iy)=P^\alg_{n_j}(X,\be_j),$ as in Definition~\ref{pt2def13}(ii).
 
\smallskip

\noindent{\bf(b)} There is a projective linear moduli stack $\M^\pl_\acA$ of objects in $\acA$, and the full and faithful functor $\ac\Pi:\acA\hookra D^b\coh(X)$ induces a morphism of stacks $\ac\Pi_*:\M_\acA^\pl\hookra\M^\pl$. However, $\ac\Pi_*$ {\it is not an open inclusion on all of\/} $\M^\pl_\acA$. For example, if $F\in\coh_{\le 1}(X)$ and $\Ext^2(\O_X,F)\cong H^1(F\ot K_X)^*\ne 0$ then the object $(F,\C,0)$ has more deformations in $D^b\coh(X)$ than it does in $\acA$, so $T_{[F,\C,0]}\Pi_*:T_{[F,\C,0]}\M_\acA^\pl\ra T_{[\ac\Pi(F,\C,0)]}\M^\pl$ is injective but not surjective.

As the obstruction theory on $P^\alg_n(X,\be)$ used to define Pandharipande--Tho\-mas invariants is the natural one on $\M^\pl$, in Theorem \ref{pt2thm5} we define our invariants using the pullback of the obstruction theory on $\M^\pl$ to $\M_\acA^\pl$. But this is only valid in the open substack of $\M_\acA^\pl$ where $\ac\Pi_*$ is an open inclusion. We must restrict the classes $(1,\be,n)\in C(\acA)$ allowed to ensure that $\ac\Pi_*$ is an open inclusion on $\M^\ss_{(1,\be,n)}(\ac\mu^\om_c)$ for all $c\in[\mu^\om(\be,n)-\ep,\iy]$, so that the $\M^\ss_{(1,\be,n)}(\ac\mu^\om_c)$ have well-behaved obstruction theories. This is why we require $\mu^\om(\be,n)>C_\be$ in Theorem \ref{pt2thm5}. We use this to ensure that all $\ac\mu^\om$-semistable sheaves $F\in\coh_{\le 1}(X)$ involved in the wall-crossing have $\mu^\om(F)$ large enough to force~$H^1(F\ot K_X)=0$.

\smallskip

\noindent{\bf(c)} We could try to approach the problem another way, following Toda \cite[\S 5]{Toda4}, using invariants in the abelian subcategory $\A=\an{\O_X[1],\coh_{\le 1}(X)}$ in $D^b\coh(X)$, which contains $\ac\Pi(\acA)$, and using Toda's `limit stability conditions' $\tau^\th$ on $\A$. One of Toda's weak stability conditions $\tau^{1/2}$ is preserved by Verdier duality, so the $\tau^{1/2}$-invariants have a $\Z_2$-symmetry, which should be used to prove Conjecture \ref{pt1conj1}(c),(f), following Toda in the Calabi--Yau 3-fold case.

We do not do this because of technical limitations in the set up of \cite{Joyc6}. In \cite{Joyc6}, the invariants are defined, and the wall-crossing formulae proved, using auxiliary `pair invariants' and `quiver invariants' defined using `framing functors' on the abelian category $\A$. The second author can define these framing functors for $\acA$, but does not know how to do this for~$\A=\an{\O_X[1],\coh_{\le 1}(X)}$.

It seems likely that in future the theory of \cite{Joyc6} will be done without framing functors, and indeed Karpov--Moreira \cite{KaMo1} do not use framing functors in their K-theoretic version.
\end{rem}

\section{Proof of Theorem \ref{pt1thm2}}
\label{pt3}

\subsection{\texorpdfstring{Dependence of $[\M^\ss_{(\be,n)}(\mu^\om)]_\inv$ on $n$}{Dependence of [ℳˢˢ₍ₐ,ₙ₎(μʷ)]ᵢₙᵥ on n}}
\label{pt31}

Work in the situation of Theorem \ref{pt1thm2}. Fix an ample line bundle $L\ra X$, and take $\om=c_1(L)\in H^2(X,\Q)\subset H^2(X,\R)$ for the K\"ahler class used to define $\mu^\om$-stability in Definition \ref{pt2def12}. Let $\be\in A_1^\alg(X)$ be a superpositive effective curve class. Then Definition \ref{pt2def12} defines moduli stacks $\M_{(\be,n)}^\ss(\mu^\om)\subset\M_{\pi(\be,n)}^\pl$ for $n\in\Z$, and Theorem \ref{pt2thm5} defines invariants $[\M^\ss_{(\be,n)}(\mu^\om)]_\inv\in H_2(\M_{\pi(\be,n)}^\pl,\Q)$. 

Use Theorem \ref{pt2thm3} with $i=2$ and $\eta=\PD(\om)\in H_4(X,\Q)$ to get $I_{e_\eta=0}$ in \eq{pt2eq22}, and apply this to $[\M^\ss_{(\be,n)}(\mu^\om)]_\inv$ to get $I_{e_\eta=0}\bigl([\M^\ss_{(\be,n)}(\mu^\om)]_\inv\bigr)$ in $H_2(\M_{\pi(\be,n)},\Q)$ as in \eq{pt2eq39}. Using \eq{pt2eq7} we can instead take these to lie in
\e
I_{e_\eta=0}\bigl([\M^\ss_{(\be,n)}(\mu^\om)]_\inv\bigr){}^0\in H_2(\M_0,\Q),
\label{pt3eq1}
\e
where the superscript 0 means that we have transferred it from $H_*(\M_{(\be,n)},\Q)$ to $H_*(\M_0,\Q)$ using \eq{pt2eq7}. We can now ask how $I_{e_\eta=0}([\M^\ss_{(\be,n)}(\mu^\om)]_\inv)$ behaves as a function of $n\in\Z$, in the fixed vector space~$H_2(\M_0,\Q)$.

Mapping $E\mapsto L\ot E$ and $E^\bu\mapsto L\ot E^\bu$ gives equivalences of categories $\jmath^L:\coh(X)\ra\coh(X)$ and $\jmath^L:D^b\coh(X)\ra D^b\coh(X)$, and an isomorphism
\begin{equation*}
\jmath^L_*:K_0(\coh(X))\longra K_0(\coh(X)).
\end{equation*}
Also $\jmath^L$ induces isomorphisms of stacks $J^L:\M\ra\M$ and $J^{\pl,L}:\M^\pl\ra\M^\pl$, which yield isomorphisms on homology
\begin{equation*}
J^L_*:H_*(\M)\longra H_*(\M),\quad 
J^{\pl,L}_*:H_*(\M^\pl)\longra H_*(\M^\pl).
\end{equation*}
In the $G$-equivariant case in Theorem \ref{pt1thm2}(b) we also require that $L\ra X$ is a $G$-equivariant line bundle (which is possible by Brion \cite[Cor.~2.14]{Brio}), and then $J^L,J^{\pl,L}$ are $G$-equivariant.

If $F\in\coh_{\le 1}(X)$ with $\lb F\rb=(\be,n)$, one can show that $\chi(L\ot F)=\chi(F)+c_1(L)\cdot\Pi_\alg^\hom(\be)$. As $\om=c_1(L)$, it follows from \eq{pt2eq35} that $\mu^\om([L\ot F])=\mu^\om([F])+1$. Thus, although mapping $F\mapsto L\ot F$ changes $\mu^\om([F])$, it does not change the inequalities $\mu^\om([F'])\le \mu^\om([F/F'])$ which define when $F$ is $\mu^\om$-semistable, so $F$ is $\mu^\om$-semistable if and only if $L\ot F$ is. Thus $J^{\pl,L}$ maps
\begin{equation*}
J^{\pl,L}:\M^\ss_{(\be,n)}(\mu^\om)\,{\buildrel\cong\over\longra}\,\M^\ss_{(\be,n+d_\be)}(\mu^\om),
\end{equation*}
where $d_\be=c_1(L)\cdot\Pi_\alg^\hom(\be)$.

The graded vertex algebra on $H_*(\M)$ in \S\ref{pt24}, and the graded Lie algebra on $H_*(\M^\pl)$ in \S\ref{pt25}, and the construction of one-dimensional Donaldson--Thomas invariants in \S\ref{pt210}, are all invariant under $J^L,J^{\pl,L}$. Thus we see that
\e
J^{\pl,L}_*:[\M^\ss_{(\be,n)}(\mu^\om)]_\inv\longmapsto[\M^\ss_{(\be,n+d_\be)}(\mu^\om)]_\inv.
\label{pt3eq2}
\e

As $F\mapsto L\ot F$ does not change $\ch_2(F)\cdot \eta$ for $F\in\coh_{\le 1}(X)$, we find that $I_{e_\eta=0}\ci J^{\pl,L}_*=J^L_*\ci I_{e_\eta=0}$. Hence \eq{pt3eq2} implies that
\e
J^L_*:I_{e_\eta=0}\bigl([\M^\ss_{(\be,n)}(\mu^\om)]_\inv\bigr)\longmapsto I_{e_\eta=0}\bigl([\M^\ss_{(\be,n+d_\be)}(\mu^\om)]_\inv\bigr).
\label{pt3eq3}
\e
This also holds when we take the invariants to lie in $H_2(\M_0,\Q)$ as in \eq{pt3eq1}, since the $J^L_*$-action is compatible with~$H_2(\M_{\pi(\be,n)},\Q)\cong H_2(\M_0,\Q)$.

We use this to prove a polynomial property of the~$I_{e_\eta=0}([\M^\ss_{(\be,n)}(\mu^\om)]_\inv)^0$.

\begin{prop}
\label{pt3prop1}
{\bf(a)} Let\/ $\be\in A_1^\alg(X)$ be superpositive. Then there exist polynomials $P_j(n)\in H_2(\M_0,\Q)[n]$ for $1\le j\le d_\be,$ of degree $\le 6,$ with
\begin{equation*}
I_{e_\eta=0}\bigl([\M^\ss_{(\be,n)}(\mu^\om)]_\inv\bigr){}^0=P_j(n)\quad\text{if\/ $n\in\Z$ with\/ $n\equiv j\mod d_\be,$}
\end{equation*}
taking $I_{e_\eta=0}([\M^\ss_{(\be,n)}(\mu^\om)]_\inv)^0$ to lie in $H_2(\M_0,\Q)$ as in\/~\eq{pt3eq1}.
\smallskip

\noindent{\bf(b)} Suppose a linear algebraic $\C$-group $G$ acts on $X,$ trivially on $A_1^\alg(X),$ and\/ $N\ge 0$. Then the analogue of\/ {\bf(a)} holds for the $I_{e_\eta=0}([\M^\ss_{(\be,n)}(\mu^\om)]^{G,\le N}_\inv)^0$ in $H_2^{G,\le N}(\M_0,\Q),$ but with\/~$\deg P_j\le (N+1)(3(2+N)+1)-1$.
\end{prop}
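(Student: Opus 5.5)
The plan is to show that the sequence $x_n:=I_{e_\eta=0}\bigl([\M^\ss_{(\be,n)}(\mu^\om)]_\inv\bigr)^0\in H_2(\M_0,\Q)$ satisfies $x_{n+d_\be}=T(x_n)$ for a single \emph{unipotent} linear operator $T$ on $H_2(\M_0,\Q)$. Granting this, suppose $(T-\id)^7=0$ on $H_2(\M_0,\Q)$. Then for $n=j+md_\be$ we get
\begin{equation*}
x_n=T^m x_j=\sum_{r=0}^{6}\binom{m}{r}(T-\id)^r x_j .
\end{equation*}
This is a polynomial of degree $\le 6$ in $m=(n-j)/d_\be$, hence in $n$, which gives the polynomials $P_j(n)$. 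Since $T$ is invertible, the same formula with $m<0$ covers negative $n$ in the same residue class.

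\emph{Step 1: identifying $T$.} By \eq{pt3eq3}, $J^L_*$ maps $I_{e_\eta=0}([\M^\ss_{(\be,n)}(\mu^\om)]_\inv)$ to $I_{e_\eta=0}([\M^\ss_{(\be,n+d_\be)}(\mu^\om)]_\inv)$. We must transport this to $\M_0$. The isomorphism \eq{pt2eq7} is induced by $F^\bu\mapsto E^\bu\op F^\bu$. Since $J^L(E^\bu\op F^\bu)=L\ot E^\bu\op L\ot F^\bu$, we obtain a commutative square relating $J^L:\M_\al\ra\M_{\jmath^L_*\al}$ with $J^L:\M_0\ra\M_0$. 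Here we use $L\ot E^\bu$ as the base point for $\M_{\jmath^L_*\al}$, which is allowed because \eq{pt2eq7} is independent of the base point, as $\M_\al$ is connected. Hence $T=J^L_*\vert_{H_2(\M_0,\Q)}$, the action of the automorphism $J^L$ of $\M_0$ itself.

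\emph{Step 2: unipotence of $T$.} Under Gross's isomorphism \eq{pt2eq8}, $J^L_*$ on $H_*(\M_0,\Q)$ is the algebra automorphism of $\SSym^*\bigl(\bigop_{i\ge1}K_i^\sht(X)\ot\Q\bigr)$ induced by $\jmath^L_*$ on $\pi_i(\M_0^\top)\cong K_i^\sht(X)$. This uses naturality of that isomorphism under automorphisms of $\M_0$ that preserve $\op$, and we note $J^L$ commutes with $\Phi$. On $K_i^\sht(X)\ot\Q$, which is a $K_0^\sht(X)$-module, $\jmath^L_*$ is multiplication by $\lb L\rb=1+(\lb L\rb-1)$. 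Since $X$ has dimension $3$, $(\lb L\rb-1)^4=0$ rationally, because its Chern character lies in $H^{\ge2}$.

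Now $H_2(\M_0,\Q)=K_2^\sht(X)_\Q\op\Lambda^2K_1^\sht(X)_\Q$. On the first summand $(T-\id)^4=0$. On the second summand $T$ acts as $A\ot A$ with $(A-\id)^4=0$, so $(T-\id)^7=0$. This gives degree $\le 6$.

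\emph{Equivariant case (b).} Choose $L$ to be $G$-equivariant, so that $J^L$ is $G$-equivariant and Steps 1–2 go through in $H_2^{G,\le N}(\M_0,\Q)$. Unipotence follows from the spectral sequence filtration of \S\ref{pt27}. The graded pieces are $H^j_G(*)\ot H_{2+j}(\M_0)$ for $0\le j\le N$, on which $T$ acts via the non-equivariant $J^L_*$ on $H_{2+j}(\M_0)$. A monomial of homological degree $2+j$ has at most $2+j$ factors, so on $H_{2+j}$ we have $(T-\id)^{3(2+j)+1}=0$. Stacking the $N+1$ filtration levels multiplies the nilpotency indices, giving $(T-\id)^{(N+1)(3(2+N)+1)}=0$ and hence the stated bound on $\deg P_j$.

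\emph{Main obstacle.} The delicate point is Step 2's claim that $J^L_*$ corresponds under \eq{pt2eq8} to the map induced on $K^\sht_*$, together with the equivariant compatibility of the filtration with $J^L$. I would first try to prove it by functoriality of the rational-homotopy description of H-spaces (Milnor–Moore) for the $\op$-H-space structure, which $J^L$ preserves.
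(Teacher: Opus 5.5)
Your proposal follows the paper's proof essentially step for step. It uses the same operator $J^L_*$ on $H_2(\M_0,\Q)\cong K_2^\sht(X)_\Q\op\La^2K_1^\sht(X)_\Q$ with $(\id-J^L_*)^7=0$. In (b) it uses the same bound $(\id-J^L_*)^{3(2-p)+1}=0$ on $H_{2-p}(\M_0,\Q)$, stacked over the $N+1$ filtration levels. Your closed formula $x_{j+md_\be}=\sum_{r\le 6}\binom{m}{r}(T-\id)^rx_j$ is an explicit solution of the paper's difference equation \eq{pt3eq6}. Your Step 1 spells out the compatibility of \eq{pt2eq7} with $J^L$, which the paper asserts in one line.

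One step, however, has a justification that fails as written, although its conclusion is true. You deduce $(\lb L\rb-1)^4=0$ in $K_0^\sht(X)_\Q$ from the vanishing of its Chern character in $H^{\rm even}(X,\Q)$ for degree reasons. That works only if $\ch:K_0^\sht(X)_\Q\ra H^{\rm even}(X,\Q)$ is injective, which fails in general. The group $K_0^\sht(X)_\Q$ remembers $1$-cycles modulo \emph{algebraic} equivalence, so a nonzero rational Griffiths group (e.g.\ on a general quintic threefold) gives a kernel. This is why the paper allows \eq{pt2eq5} to be non-injective after $\ot_\Z\Q$.

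The paper instead uses the Friedlander--Walker algebra isomorphism $\ch:K_0^\sht(X)_\Q\ra A^*(X,\Q)$ onto cycles modulo algebraic equivalence, graded by codimension. Then $\ch\bigl((\lb\O_X\rb-\lb L\rb)^4\bigr)\in A^{\ge 4}(X,\Q)=0$, and injectivity gives the vanishing. Alternatively, you can run your degree argument in $K_0(X)_\Q\cong CH^*(X)_\Q$ and push it along the ring surjection $K_0(X)\twoheadrightarrow K_0^\sht(X)$. Either way, nilpotency then passes to the $K_0^\sht(X)_\Q$-modules $K_i^\sht(X)_\Q$, as you need.

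A smaller imprecision in (b): the graded pieces of the filtration on $H_2^{G,\le N}(\M_0,\Q)$ are the $E^\iy_{p,2-p}$. These are only subquotients of $H^{-p}_G(*,\Q)\ot H_{2-p}(\M_0,\Q)$. The nilpotency bound still descends to them because $J^L_*$ commutes with the differentials on every page, as the paper notes.
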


\begin{proof}
For (a), using the notation of Definition \ref{pt2def3}, write $K_i^\sht(X)_\Q=K_i^\sht(X)\ab\ot_\Z\Q$ for $i\ge 0$. Then $K_0^\sht(X)_\Q$ is a $\Q$-algebra with product $\lb E^\bu\rb\cdot\lb F^\bu\rb=\lb E^\bu\ot^LF^\bu\rb$ and identity $\lb\O_X\rb$, with a representation on $K_i^\sht(X)_\Q$ for each $i\ge 1$. By Friedlander--Walker \cite[Th.~1.4]{FrWa2} there is a Chern character map $\ch:K_0^\sht(X)_\Q\ra A^*(X,\Q)=\bigop_{i=0}^3A^i(X,\Q)$, where $A^i(X,\Q)$ is the $\Q$-vector space of algebraic $(3-i)$-cycles on $X$ modulo algebraic equivalence, and $A^*(X,\Q)$ is an algebra under intersection. Furthermore $\ch$ is an isomorphism of $\Q$-algebras. We have $A^0(X,\Q)=\Q$ as $X$ is connected, and $\ch_0=\rank:K_0^\sht(X)_\Q\ra\Q$. Thus $\ch(\lb\O_X\rb-\lb L\rb)\in A^{\ge 1}(X,\Q)$ as $\rank\O_X=\rank L=1$, so that $\ch\bigl((\lb\O_X\rb-\lb L\rb\rb)^4\bigr)\in A^{\ge 4}(X,\Q)=0$. Therefore $(\lb\O_X\rb-\lb L\rb)^4=0$ in $K_0^\sht(X)_\Q$.

It follows from \eq{pt2eq8} that
\e
H_2(\M_0,\Q)\cong K_2^\sht(X)_\Q\op\La^2K_1^\sht(X)_\Q.
\label{pt3eq4}
\e
We claim that
\e
\sum_{i=0}^7(-1)^i\binom{7}{i}\bigl(J^L_*\bigr)^i=0:H_2(\M_0,\Q)\longra H_2(\M_0,\Q).
\label{pt3eq5}
\e
To see this, note that $J^L_*$ acts as multiplication by $\lb L\rb\in K_0^\sht(X)_\Q$ on the $K_2^\sht(X)_\Q$ factor in \eq{pt3eq4}, so $(\id-J^L_*)^7$ acts as multiplication by $(\lb\O_X\rb-\lb L\rb)^7=0$ in $K_0^\sht(X)_\Q$. Also $J^L_*$ acts as multiplication by $\lb L\rb\ot \lb L\rb$ in $K_0^\sht(X)_\Q\ot K_0^\sht(X)_\Q$ on the $\La^2K_1^\sht(X)_\Q$ factor in \eq{pt3eq4}. Using
\begin{equation*}
(1\!\ot\! 1\!-\!\lb L\rb\!\ot\! \lb L\rb)^7\!=\!\bigl((1\!-\!\lb L\rb)\!\ot\! 1+1\ot (1\!-\!\lb L\rb)\!-\!(1\!-\!\lb L\rb)\!\ot\!(1\!-\!\lb L\rb)\bigr)^7,
\end{equation*}
we can expand $(1\ot 1-\lb L\rb\ot\lb L\rb)^7$ as a sum of terms each of which contains a factor of $(1-\lb L\rb)^4$ acting on either the left or right factor of $K_1^\sht(X)_\Q$, so again $(\id-J^L_*)^7=0$ on this factor.

Now for any $n\in\Z$, applying \eq{pt3eq5} to $I_{e_\eta=0}([\M^\ss_{(\be,n)}(\mu^\om)]_\inv)^0$ and noting that $\bigl(J^L_*\bigr){}^i\bigl(I_{e_\eta=0}([\M^\ss_{(\be,n)}(\mu^\om)]_\inv)^0\bigr)\!=\!I_{e_\eta=0}\bigl([\M^\ss_{(\be,n+id_\be)}(\mu^\om)]_\inv\bigr){}^0$ by \eq{pt3eq3} gives
\e
\sum_{i=0}^7(-1)^i\binom{7}{i}I_{e_\eta=0}\bigl([\M^\ss_{(\be,n+id_\be)}(\mu^\om)]_\inv\bigr){}^0=0\quad\text{in $H_2(\M_0,\Q)$.}
\label{pt3eq6}
\e

Fix $1\le j\le d$ and consider the function $F_j:\Z\ra H_2(\M_0,\Q)$ mapping $F_j:k\mapsto I_{e_\eta=0}([\M^\ss_{(\be,j+kd_\be)}(\mu^\om)]_\inv)^0$. Then $\sum_{i=0}^7(-1)^i\binom{7}{i}F_j(k+i)=0$ for each $k\in\Z$ by \eq{pt3eq6}. This is a difference equation whose solutions are exactly polynomials $F_j(k)\in H_2(\M_0,\Q)[k]$ of degree $\le 6$. Part (a) follows.

For (b), fix $N\ge 0$, and use the notation of \S\ref{pt27}. Then $H_2^{G,\le N}(\M_0,\Q)$ has a finite filtration
\ea
H_2^{G,\le N}(\M_0,\Q)&=\ti F^0H_2^{G,\le N}(\M_0,\Q)\supseteq\ti F^{-1}H_2^{G,\le N}(\M_0,\Q)\supseteq\cdots
\nonumber\\
&\supseteq\ti F^{-N-1}H_2^{G,\le N}(\M_0,\Q)=0,
\label{pt3eq7}
\ea
where $\ti F^pH_2^{G,\le N}(\M_0,\Q)\!=\!F^pH_2^G(\M_0,\Q)/F^{-N-1}H_2^G(\M_0,\Q)$, and by \eq{pt2eq30}
\e
\frac{\ti F^pH_2^{G,\le N}(\M_0,\Q)}{\ti F^{p-1}H_2^{G,\le N}(\M_0,\Q)}\cong E^\iy_{p,2-p},\quad p=0,-1,\ldots,-N.
\label{pt3eq8}
\e

The entire spectral sequence $H^{-p}_G(*,\Q)\ot H_q(\M_0,\Q)\Ra H_{p+q}^G(\M_0,\Q)$ in \S\ref{pt27} is compatible with the $G$-equivariant action of $J^L:\M_0\ra\M_0$. Thus $J^L_*\!:\!H_n^G(\M_0,\Q)\!\ra\! H_n^G(\M_0,\Q)$ preserves the filtration $(F^pH_n^G(\M_0,\Q))_{p\le 0}$, and $J^L_*$ has actions on the $E^k_{p,q}$ commuting with the $d^k_{p,q}$ and compatible with the isomorphisms \eq{pt2eq29}--\eq{pt2eq30}.

Consider the action of $J^L_*$ on $E^2_{p,2-p}=H^{-p}_G(*,\Q)\ot_\Q H_{2-p}(\M_0)$ for $p=0,-1,\ldots,-N$. By the argument in (a) we can show that
\begin{equation*}
(\id-J^L_*)^{3(2-p)+1}=0:H_{2-p}(\M_0)\longra H_{2-p}(\M_0).
\end{equation*}
Hence we see that
\begin{equation*}
(\id-J^L_*)^{3(2+N)+1}=0:E^2_{p,2-p}\longra E^2_{p,2-p}, \quad p=0,-1,\ldots,-N.
\end{equation*}
Now $E^{k+1}_{p,2-p}$ is obtained by taking cohomology on $E^k_{p,2-p}$ for $k=2,3,\ldots,\iy$, so by induction we see that $(\id-J^L_*)^{3(2+N)+1}=0$ on $E^k_{p,2-p}$ for $k=2,3,\ldots,\iy$. Thus by \eq{pt3eq8} we see that
\begin{equation*}
(\id-J^L_*)^{3(2+N)+1}\bigl(\ti F^pH_2^{G,\le N}(\M_0,\Q)\bigr)\subseteq \ti F^{p-1}H_2^{G,\le N}(\M_0,\Q)
\end{equation*}
for $p=0,-1,\ldots,-N$. Applying this $N+1$ times and using \eq{pt3eq7}, we see that
\e
(\id-J^L_*)^{(N+1)(3(2+N)+1)}=0:H_2^{G,\le N}(\M_0,\Q)\longra H_2^{G,\le N}(\M_0,\Q).
\label{pt3eq9}
\e
The argument in (a), but using \eq{pt3eq9} rather than \eq{pt3eq5}, now proves~(b).
\end{proof}

\begin{rem}
\label{pt3rem1}
Note that the upper bound for $\deg P_j$ in Proposition \ref{pt3prop1}(b) goes to $\iy$ as $N\ra\iy$. The authors expect the analogue of Proposition \ref{pt3prop1}(b) for $I_{e_\eta=0}([\M^\ss_{(\be,n)}(\mu^\om)]^G_\inv)^0$ (which would in effect be the limit of Proposition \ref{pt3prop1}(b) as $N\ra\iy$) to be false in general. As Proposition \ref{pt3prop1}(b) is an essential ingredient in proving Theorem \ref{pt1thm2}(b), the authors also expect the analogue of Theorem \ref{pt1thm2}(b) for $[P_n^\alg(X,\be)]^{0,G}_\virt$ to be false in general. That is, $F^{G,\le N}(q)$ in Theorem \ref{pt1thm2}(b) may have poles at $q=e^{2\pi ik/d_\be}$ whose degree goes to infinity as $N\ra\iy$, so that $F^G(q)=\varprojlim_{N\ra\iy}F^{G,\le N}(q)$ is not a rational function.
\end{rem}

\subsection{Piecewise quasi-polynomial functions}
\label{pt32}

We develop some material we will need in \S\ref{pt33}.

\begin{dfn}
\label{pt3def1}
Let $V$ be a $\Q$-vector space. A function $f:\Z\ra V$ is a {\it quasi-polynomial} if there exist $d\ge 1$ and polynomials $P_1,\dots,P_d\in V[x]$ such that
\begin{equation*}
f(n)=P_a(n)\qquad\text{whenever }n\equiv a\mod d.
\end{equation*}
More generally, a function $f:\Z^k\ra V$ is a {\it quasi-polynomial\/} if there exist $d\ge 1$ and
polynomials $P_{\bs a}(x_1,\dots,x_k)\in V[x_1,\dots,x_k]$ for $\bs a\in\{1,2,\ldots,d\}^k$, such that
\begin{equation*}
f(n_1,\dots,n_k)=P_{\bs a}(n_1,\dots,n_k)
\quad\text{whenever }(n_1,\dots,n_k)\equiv \bs a\mod d.
\end{equation*}
Quasi-polynomials $f,g:\Z^k\ra V$ are closed under addition, that is, $f+g$ is also quasi-polynomial, taking lowest common multiples of periods.

A {\it chamber decomposition} of $\Z^k$ is a finite partition
\begin{equation*}
\Z^k=A_1\amalg\cdots\amalg A_N
\end{equation*}
in which each chamber $A_i$ is cut out by finitely many rational affine equalities and inequalities of the form
\e
C_1n_1+\cdots+C_kn_k\ \square\ D,
\quad \square\in\{<,\le,=,\ge,>\},
\quad C_1,\dots,C_k,D\in\Q.
\label{pt3eq10}
\e

A function $f:\Z^k\ra V$ is called {\it piecewise quasi-polynomial\/} if there is a chamber decomposition $\Z^k=A_1\amalg\cdots\amalg A_N$ and quasi-polynomials $g_1,\ldots,g_N:\Z^k\ra V$ such that $f\vert_{A_i}=g_i\vert_{A_i}$ for $i=1,\ldots,N$. Piecewise quasi-polynomial functions $f,g:\Z^k\ra V$ are closed under addition.
\end{dfn}

The next proposition is a standard result in Ehrhart theory. See, for example, Beck and Robins~\cite[Chs.~3--4]{BeRo}.

\begin{prop}
\label{pt3prop2}
Let $V$ be a $\Q$-vector space and\/ $F:\Z^k\ra V$ be piecewise quasi-polynomial. Assume that for each\/ $n\in\Z$ one has $F(n_1,\dots,n_k)=0$ for all but finitely many $(n_1,\dots,n_k)$ with\/ $n_1+\cdots+n_k=n$. Define $H:\Z\ra V$ by
\begin{equation*}
H(n)=\sum_{n_1+\cdots+n_k=n} F(n_1,\dots,n_k).
\end{equation*}
Then $H$ is piecewise quasi-polynomial in $n$.
\end{prop}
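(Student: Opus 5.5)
Since $F$ is piecewise quasi-polynomial, I first reduce to a single chamber and a single residue class. Choose a chamber decomposition $\Z^k=A_1\amalg\cdots\amalg A_N$ and quasi-polynomials $g_1,\ldots,g_N$ with $F\vert_{A_i}=g_i\vert_{A_i}$. Let $d$ be a common period for all the $g_i$. Then
\begin{equation*}
H(n)=\sum_{i=1}^N\sum_{\bs a\in\{1,\ldots,d\}^k}\;\sum_{\bs n\in A_i,\ \bs n\equiv\bs a\bmod d,\ n_1+\cdots+n_k=n}P_{i,\bs a}(\bs n).
\end{equation*}
Piecewise quasi-polynomials are closed under addition, so it suffices to treat one term.

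Fix $i$ and $\bs a$, and write $\bs n=\bs a+d\bs m$ with $\bs m\in\Z^k$. The condition $\bs n\in A_i$ becomes finitely many rational affine (in)equalities in $\bs m$. The condition $\sum n_j=n$ becomes $d\sum m_j=n-\sum a_j$. This is empty unless $n\equiv\sum a_j\bmod d$, which is itself a quasi-polynomial condition. When $n$ is in the right class, put $t=(n-\sum a_j)/d$. The summation set is then the lattice points of a rational polyhedron $Q(t)\subset\{\sum m_j=t\}$, cut out by inequalities whose right-hand sides are affine in $t$.

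The finiteness hypothesis needs care. Only the points where $F\ne0$ are finitely many, so $Q(t)$ itself could be unbounded. I would handle this by refining the chamber decomposition so that on each chamber $F$ is either identically $0$ or equal to $g_i$. Then on the chambers where $F\ne0$, the set $Q(t)\cap\Z^k$ is finite for every $t$. A rational polyhedron whose lattice points are finite for all parameter values, all lying on a hyperplane, should be bounded after restricting to its integral hull. I expect justifying this cleanly, including how $Q(t)$ and its recession cone depend on $t$, to be the main technical point.

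With bounded parametric rational polytopes $Q(t)$, I apply the Ehrhart-theoretic result cited from Beck--Robins. For a family $\{x: Ax\le b+ct\}$, the weighted lattice-point sum $\sum_{\bs m\in Q(t)\cap\Z^k}p(\bs m)$, with $p$ a polynomial in $\bs m$ (polynomial weights handled via Brion/Barvinok generating functions or the multivariate vector-partition-function results), is a piecewise quasi-polynomial in $t$. The chambers in $t$ are the intervals on which the combinatorial type of $Q(t)$ is constant, and their endpoints are rational. Finally I substitute $t=(n-\sum a_j)/d$ and combine with the congruence condition on $n$. Because $V$ is a $\Q$-vector space, I can work coordinatewise in a finite-dimensional subspace spanned by the polynomial coefficients. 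This gives $H$ piecewise quasi-polynomial on $\Z$.
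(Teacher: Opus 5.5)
Your overall route is what the paper has in mind: split by chamber and residue class, rewrite each piece as a polynomially weighted lattice-point sum over slices $Q(t)$ of a rational polyhedron, and apply parametric Ehrhart theory or vector partition function results. The paper itself gives no argument beyond citing Beck--Robins. However, the step you flag as the main technical point is a genuine gap, and the fix you suggest does not work.

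First, you cannot in general refine a polyhedral chamber decomposition so that a quasi-polynomial is identically zero or nowhere zero on each chamber. Zero loci such as $\{n_1=n_2^2\}$, which can occur on a chamber with bounded slices without violating the hypothesis, are not cut out by finitely many rational affine (in)equalities. Second, passing to integral hulls of the $Q(t)$ destroys the form $\{\bs m:A\bs m\le b+ct\}$ with $A$ fixed. That form is exactly what the parametric Ehrhart results need, so you would lose control of how the summation domain varies with $t$. As written, the Ehrhart step has no valid input whenever $Q(t)$ is unbounded.

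The missing idea is a dichotomy that needs no refinement at all. In $\bs m$-coordinates the chamber conditions do not involve $t$: $Q(t)=P\cap\{m_1+\cdots+m_k=t\}$ for one fixed rational polyhedron $P$. Strict inequalities $\alpha\cdot\bs m<\beta$ with $\alpha$ integral may be replaced by $\alpha\cdot\bs m\le\lceil\beta\rceil-1$ without changing the lattice points, so $P$ can be taken closed. Hence every nonempty $Q(t)$ has the same recession cone $C=\{r\in\mathrm{rec}(P):r_1+\cdots+r_k=0\}$, which is a rational cone.

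If $C=0$, every $Q(t)$ is a polytope and your Ehrhart argument applies. If $C\ne0$, pick an integer vector $r\in C\sm\{0\}$. For any lattice point $\bs m\in Q(t)$, the points $\bs m+sr$ with $s\in\N$ all lie in $Q(t)\cap\Z^k$. They also correspond to the same hyperplane $n_1+\cdots+n_k=n$, because $r_1+\cdots+r_k=0$. So by the finiteness hypothesis, the $V$-valued polynomial $s\mapsto P_{i,\bs a}(\bs a+d(\bs m+sr))$ vanishes for all but finitely many $s\in\N$, hence vanishes identically. At $s=0$ this gives $F(\bs a+d\bs m)=0$. So such a piece contributes zero for every $n$ and can simply be dropped.

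With this dichotomy inserted, your proof goes through. The walls in $t$ are rational and are handled by singleton chambers, as allowed in Definition \ref{pt3def1}.
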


\subsection{\texorpdfstring{Dependence of $[P^\alg_n(X,\be)]_\virt$ on $n$}{Dependence of [Pₙᵃˡᵍ(X,β)]ᵥᵢᵣₜ on n}}
\label{pt33}

\begin{prop}
\label{pt3prop3}
{\bf(a)} Let\/ $\be\in A_1^\alg(X)$ be superpositive. As in {\rm\eq{pt2eq12},} regard\/ $[P_n^\alg(X,\be)]^0_\virt$ as an element of\/ $H_{2c_1(X)\cdot\be}(\M_0,\Q)$. Then the function $\Z\ra H_{2c_1(X)\cdot\be}(\M_0,\Q),$ $n\mapsto[P_n^\alg(X,\be)]^0_\virt$ is piecewise quasi-polynomial in\/~$n$.
\smallskip

\noindent{\bf(b)} Suppose a linear algebraic $\C$-group $G$ acts on $X,$ and acts trivially on $A_1^\alg(X),$ and\/ $N\ge 0$. Then the analogue of\/ {\bf(a)} holds for the function $\Z\ra H_{2c_1(X)\cdot\be}^{G,\le N}(\M_0,\Q)$ mapping $n\mapsto[P_n^\alg(X,\be)]_\virt^{0,G,\le N}$.
\end{prop}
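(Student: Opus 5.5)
We will prove (a) by induction on the number of factors of $\be$, using the wall-crossing identity \eq{pt2eq41} from Theorem \ref{pt2thm6}(b). First, for $n\le C_\be\,\om\cdot\Pi_\alg^\hom(\be)$ there is only a half-line $\{n\le n_0\}$ to handle; since $P_n(X,\be)=\es$ for $n\ll 0$, the function is zero for $n$ small and takes finitely many arbitrary values on a bounded interval. Any function supported on a finite set is piecewise quasi-polynomial, using singleton chambers $\{n=c\}$. So it suffices to show that $n\mapsto[P_n^\alg(X,\be)]^0_\virt$ agrees with a piecewise quasi-polynomial function for $n>C_\be\,\om\cdot\Pi^\hom_\alg(\be)$.

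For such $n$, we first lift \eq{pt2eq41} to $H_*(\M)$. By \eq{pt2eq26}, $[P_n^\alg(X,\be)]_\virt=I_{e_1=0}\Pi^\pl_*([P_n^\alg(X,\be)]_\virt)$. We apply $I_{e_1=0}$ to the right-hand side of \eq{pt2eq41} and expand the iterated Lie brackets using Theorem \ref{pt2thm3}(d),(e) with $i=0$, $\eta=1$. Every bracket contains exactly one class of rank $-1$ (the PT factor), and the DT factors have rank $0$, so (e) applies. For the DT factors we choose the lifts $v'=I_{e_\eta=0}([\M^\ss_{(\be_i,n_i)}(\mu^\om)]_\inv)$ with $\eta=\PD(\om)$. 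This expresses $[P_n^\alg(X,\be)]_\virt$ as a finite sum, over decompositions $(\be_1,n_1),\ldots,(\be_k,n_k)$, of universal coefficients $\ti U(\cdots)$ times iterated vertex-algebra operations $D^{a}[(\cdots)_{a}(\cdots)]$. These operations are applied to the lifted DT classes and to lower PT classes $[P^\alg_{n_j}(X,\be_j)]_\virt$, where $\be_j$ has fewer factors, or $(\be_j,n_j)=(0,0)$. Transporting everything to $H_*(\M_0)$ via \eq{pt2eq7}, we need three facts:
\begin{itemize}
\item[(i)] By Proposition \ref{pt3prop1}, the DT inputs are quasi-polynomial in $n_i$. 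By the inductive hypothesis, the PT inputs are piecewise quasi-polynomial in $n_j$; for $n_j$ below the threshold they are finitely supported, which is still piecewise quasi-polynomial.
\item[(ii)] The vertex operation $u_a(v)$ from \eq{pt2eq16}, read in $H_*(\M_0)$, is polynomial in the classes $(\be_i,n_i)$. Its ingredients are the signs $(-1)^{\chi}$, the exponents $\chi(\al,\be)+\chi(\be,\al)$, and the Chern classes of $\cExt$. By Riemann--Roch, each of these depends polynomially (or $\pm1$-periodically) on the $n_i$, because the $\up(\cdot)$ translation identifies the universal complexes up to adding fixed classes. Similarly, the coefficients in \eq{pt2eq24}--\eq{pt2eq25} are polynomial in $n$, or constant.
\item[(iii)] The coefficients $\ti U(\ldots;\ac\mu^\om_\iy,\ac\mu^\om_{\mu^\om(\be,n)-\ep})$ depend only on the comparisons among slopes $n_i/\om\cdot\be_i$ and $n/\om\cdot\be$. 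They are therefore constant on chambers cut out by rational linear inequalities in $(n_1,\ldots,n_k)$.
\end{itemize}
Hence each summand is a piecewise quasi-polynomial function $F(n_1,\ldots,n_k)$. The set of effective decompositions $\be=\be_1+\cdots+\be_k$ is finite, and Theorem \ref{pt2thm6} guarantees finitely many nonzero terms for each fixed $n$. Proposition \ref{pt3prop2} then gives that the sum over $n_1+\cdots+n_k=n$ is piecewise quasi-polynomial in $n$. Restricting to the chamber $n>C_\be\,\om\cdot\be$ and gluing it with the finite-support part completes (a). The base case is the induction start: for irreducible $\be$, \eq{pt2eq42} reduces directly to Proposition \ref{pt3prop1} together with one vertex operation.

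The main obstacle is step (ii): making precise that, after transport to $\M_0$, the vertex-algebra operations $u_a(v)$ and the cap products by Chern classes of $\cExt_{\al,\be}$ are polynomial in the $n_i$. I would do this by writing $\cExt$ on $\M_{\up(1,\be_j,n_j)}\t\M_{\pi(\be_i,n_i)}$ as the pullback from $\M_0\t\M_0$ under the direct-sum equivalences, plus a contribution from fixed objects, whose Chern characters are linear in $n_i$. A secondary concern is that Proposition \ref{pt3prop2} needs the finiteness hypothesis uniformly for each fixed $n$. This follows because $P_{n_j}(X,\be_j)=\es$ for $n_j\ll0$, and because $\mu^\om$-semistable sheaves in the wall-crossing have slopes bounded between $\mu^\om(\be,n)-\ep$ and the PT slope.

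Part (b) follows the same argument verbatim in $H^{G,\le N}_*$. We use Theorem \ref{pt2thm6}(d), the truncated versions of Theorem \ref{pt2thm3} from \S\ref{pt28} (with the sums still finite), and Proposition \ref{pt3prop1}(b) for the DT inputs.
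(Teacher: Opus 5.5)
Your proposal is correct and is essentially the paper's own proof: induction on the size of $\be$, lifting \eq{pt2eq41} to $H_*(\M)$ via Theorem~\ref{pt2thm3}(e) with $I_{e_\eta=0}$-lifts of the DT classes, transporting to $H_*(\M_0)$, taking quasi-polynomial inputs from Proposition~\ref{pt3prop1} and the inductive hypothesis, using piecewise constant $\ti U$ coefficients and Proposition~\ref{pt3prop2}, and gluing with the finitely supported range of $n$. The paper is more careful than you on three points: the $\ti U(\cdots)$ are not uniquely determined by Theorem~\ref{pt2thm4}, so one must \emph{choose} them constant on chambers (after refining the chambers so that $U(\cdots)$ is constant there for every permutation of the inputs); the inner brackets among DT classes (present when $j\ge 3$) contain no rank $-1$ class, so (e) does not apply to them, and they are lifted simply as $u_0(v)$ via Theorem~\ref{pt2thm2}(b), which suffices because \eq{pt2eq25} is independent of the lift; and what makes $u_a(v)$ polynomial is that $\chi(\al,\be)+\chi(\be,\al)$ is actually independent of $n',n''$, not merely polynomial in them, so the extracted power of $z$ does not move with $n$.
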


\begin{proof}
For $\be\in A_1^\alg(X)$ superpositive, define $K(\be)\ge 1$ to be the maximum number $K$ such that we may write $\be=\be_1+\cdots+\be_K$ with $\be_i\in A_1^\alg(X)$ effective for $i=1,\ldots,K$. We will prove the proposition by induction on $K(\be)=1,2,\ldots.$ Our inductive hypothesis for $K=0,1,\ldots$ is:
\begin{itemize}
\setlength{\itemsep}{0pt}
\setlength{\parsep}{0pt}
\item[$(*)_K$] Suppose parts (a),(b) of the proposition hold whenever $K(\be)\le K$.
\end{itemize}

The first step $K=0$ is trivial. For the inductive step, suppose $(*)_K$ holds for some $K=0,1,\ldots,$ and let $\be\in A_1^\alg(X)$ be superpositive with $K(\be)=K+1$. Theorem \ref{pt2thm6} for this $\be$ gives $C_\be\in\R$ such that if $n>C_\be\,\om\cdot \Pi_\alg^\hom(\be)$ then equation \eq{pt2eq41} holds in the Lie algebra $H_*(\M^\pl,\Q)$. We claim that using Theorem \ref{pt2thm3}(d),(e) with $i=0$ and $\eta=1\in H_0(X,\Q)$ we can lift \eq{pt2eq41} to the following equation in $H_*(\M,\Q)$:
\ea
&[P^\alg_n(X,\be)]_\virt=
\label{pt3eq11}\\
&-\sum_{\begin{subarray}{l}
1\le j\le k,\; k\ge 2, \\
\be=\be_1+\cdots+\be_k,  \\
n=n_1+\cdots+n_k,\\ 
\text{$\be_i$ effective and} \\
\text{$n_i\in\Z,$ $i\ne j,$} \\
\text{either $\be_j$} \\
\text{effective and}\\
\text{$n_j\in\Z,$ or}\\
\text{$(\be_j,n_j)\!=\!(0,0)$}
\end{subarray}}\,\,
\begin{aligned}[t]&
\ti U\bigl((0,\be_1,n_1),\ldots,(0,\be_{j-1},n_{j-1}),(1,\be_j,n_j),\\
&(0,\be_{j+1},n_{j+1}),\ldots,(0,\be_k,n_k);\ac\mu^\om_\iy,\ac\mu^\om_{\mu^\om(\be,n)-\ep}\bigr)\cdot \\
&\bigl[\bigl[\cdots\bigl[I_{e_\eta=0}([\M^\ss_{(\be_1,n_1)}(\mu^\om)]_\inv),I_{e_\eta=0}([\M^\ss_{(\be_2,n_2)}(\mu^\om)]_\inv)\bigr]_{e_1=0},\\
&\ldots,I_{e_\eta=0}([\M^\ss_{(\be_{j-1},n_{j-1})}(\mu^\om)]_\inv)\bigr]_{e_1=0},[P^\alg_{n_j}(X,\be_j)]_\virt\bigr]_{e_1=0},\\
&I_{e_\eta=0}([\M^\ss_{(\be_{j+1},n_{j+1})}(\mu^\om)]_\inv)\bigr]_{e_1=0},\ldots,\\
&I_{e_\eta=0}([\M^\ss_{(\be_k,n_k)}(\mu^\om)]_\inv)\bigr]_{e_1=0}.
\end{aligned}
\nonumber
\ea

Here our convention is that $[P_n^\alg(X,\be)]_\virt\in H_{2c_1(X)\cdot\be}(\M_{\up(1,\be,n)},\Q)$ as in \eq{pt2eq10}, and similarly for $[P^\alg_{n_j}(X,\be_j)]_\virt$. Also $I_{e_\eta=0}([\M^\ss_{(\be_i,n_i)}(\mu^\om)]_\inv)\in H_2(\M_{\pi(\be_i,n_i)},\Q)$ as in \eq{pt2eq39}, for $\eta=\PD(\om)$ as in \S\ref{pt31}. Note that in \eq{pt3eq11} we do {\it not\/} yet transfer classes from $H_*(\M_\al,\Q)$ to $H_*(\M_0,\Q)$ using \eq{pt2eq7} as in \eq{pt2eq12} and \eq{pt3eq1}, which would be indicated by superscripts~0.

We have written the `Lie brackets' in \eq{pt3eq11} as $[\,,\,]_{{e_1=0}}$, to indicate that they are defined to be compatible with the morphisms $I_{e_1=0}$ in Theorem \ref{pt2thm3} for $i=0$ and $\eta=1$. We define $[\,,\,]_{{e_1=0}}$ as follows:
\begin{itemize}
\setlength{\itemsep}{0pt}
\setlength{\parsep}{0pt}
\item[(i)] If $u\in H_*(\M_{\up(0,\be',n')},\Q)$ and $v\in H_*(\M_{\up(0,\be'',n')},\Q)$ for $\be',\be''\in A_1^\alg(X)$ effective then $[u,v]_{{e_1=0}}=u_0(v)$, using the vertex algebra structure on $H_*(\M,\Q)$ in Theorem \ref{pt2thm1}.
\item[(ii)] If $u\in H_*(\M_{\up(1,\be',n')},\Q)$ and $v\in H_*(\M_{\up(0,\be'',n')},\Q)$ for $\be''$ effective then
\begin{equation*}
[u,v]_{{e_1=0}}=\sum_{k\ge 0}\frac{(-1)^k}{k!}D^k\bigl(u_k(v)\bigr).
\end{equation*}
\item[(iii)] If $u\in H_*(\M_{\up(0,\be',n')},\Q)$ and $v\in H_*(\M_{\up(1,\be'',n')},\Q)$ for $\be'$ effective then
\begin{equation*}
[u,v]_{{e_1=0}}=-\sum_{k\ge 0}\frac{(-1)^k}{k!}D^k\bigl(v_k(u)\bigr).
\end{equation*}
\end{itemize}
All brackets in \eq{pt3eq11} fall into one of these three cases.

Here is how to understand all this. We are lifting \eq{pt2eq41} from $H_*(\M^\pl,\Q)$ to $H_*(\M,\Q)$. For terms in \eq{pt2eq41} in $H_*(\M^\pl_{\up(0,\be',n')},\Q)$, such as $[\M^\ss_{(\be_i,n_i)}(\mu^\om)]_\inv$ and any repeated Lie brackets of these in \eq{pt2eq41}, we are happy with {\it any\/} lift from $H_*(\M^\pl_{\up(0,\be',n')},\Q)$ to $H_*(\M_{\up(0,\be',n')},\Q)$. For the $[\M^\ss_{(\be_i,n_i)}(\mu^\om)]_\inv$ we lift using $I_{e_\eta=0}$ for compatibility with Proposition \ref{pt3prop1} later, but the choice of lift does not affect the outcome. For (i) above we take the Lie bracket to be $u_0(v)$, as this is correct up to choice of lift by~\eq{pt2eq13}.

For terms in \eq{pt2eq41} in $H_*(\M^\pl_{\up(1,\be',n')},\Q)$, such as $\Pi^\pl_*([P^\alg_n(X,\be)]_\virt)$ and $\Pi^\pl_*([P^\alg_{n_j}(X,\be_j)]_\virt)$, and any repeated Lie bracket of $\Pi^\pl_*([P^\alg_{n_j}(X,\be_j)]_\virt)$ with multiple $[\M^\ss_{(\be_i,n_i)}(\mu^\om)]_\inv$'s, we lift to $H_*(\M_{\up(1,\be',n')},\Q)$ using $I_{e_1=0}$ in Theorem \ref{pt2thm3} for $i=0$ and $\eta=1$. Equation \eq{pt2eq26} shows that we can replace $I_{e_1=0}\ci\Pi^\pl_*([P_n^\alg(X,\be)]_\virt)$ by $[P_n^\alg(X,\be)]_\virt$, and similarly for $[P^\alg_{n_j}(X,\be_j)]_\virt$. The brackets $[\,,\,]_{e_1=0}$ in (ii),(iii) above are now justified by Theorem \ref{pt2thm3}(e), which also shows that we can use arbitrary lifts from $H_*(\M^\pl_{\up(0,\be',n')},\Q)$ to $H_*(\M_{\up(0,\be',n')},\Q)$, since \eq{pt2eq25} is independent of the choice of lift $v'$. This proves equation~\eq{pt3eq11}.

Next, we wish to rewrite \eq{pt3eq11} solely in terms of classes in $H_*(\M_0,\Q)$, such as $[P_n^\alg(X,\be)]^0_\virt$ in \eq{pt2eq12} and $I_{e_\eta=0}([\M^\ss_{(\be_i,n_i)}(\mu^\om)]_\inv)^0$ in \eq{pt3eq1}, using the isomorphisms $H_*(\M_{\up(d',\be',n')},\Q)\cong H_*(\M_0,\Q)$ in \eq{pt2eq7}. For classes $(d',\be',n')$ and $(d'',\be'',n'')$ as in (i)--(iii) above, with $(d',d'')\in\{(0,0),(1,0),(0,1)\}$, define a $\Q$-bilinear bracket $[\,,\,]^{(d',\be',n')}_{(d'',\be'',n'')}$ on $H_*(\M_0,\Q)$ by the commutative diagram
\e
\begin{gathered}
\xymatrix@C=150pt@R=13pt{ *+[r]{H_*(\M_0,\Q)\t H_*(\M_0,\Q)} \ar[r]_(0.6){[\,,\,]^{(d',\be',n')}_{(d'',\be'',n'')}} \ar[d]^(0.4){\eq{pt2eq7}\t\eq{pt2eq7}} & *+[l]{H_*(\M_0,\Q)}  \\
*+[r]{\begin{subarray}{l}\ts H_*(\M_{\up(d',\be',n')},\Q)\t \\ \ts H_*(\M_{\up(d'',\be'',n'')},\Q)\end{subarray}} \ar[r]^(0.4){[\,,\,]_{{e_1=0}}} &  *+[l]{H_*(\M_{\up(d'+d'',\be'+\be'',n'+n'')},\Q).\!} \ar[u]^{\eq{pt2eq7}} }
\end{gathered}
\label{pt3eq12}
\e

Equation \eq{pt3eq11} is now equivalent to
\ea
&[P^\alg_n(X,\be)]^0_\virt=
\label{pt3eq13}\\
&-\!\!\!\!\!\!\sum_{\begin{subarray}{l}
1\le j\le k,\; k\ge 2, \\
\be=\be_1+\cdots+\be_k,  \\
n=n_1+\cdots+n_k,\\ 
\text{$\be_i$ effective and} \\
\text{$n_i\in\Z,$ $i\ne j,$} \\
\text{either $\be_j$} \\
\text{effective and}\\
\text{$n_j\in\Z,$ or}\\
\text{$(\be_j,n_j)\!=\!(0,0)$}
\end{subarray}}\,\,
\begin{aligned}[t]&
\ti U\bigl((0,\be_1,n_1),\ldots,(0,\be_{j-1},n_{j-1}),(1,\be_j,n_j),\\
&(0,\be_{j+1},n_{j+1}),\ldots,(0,\be_k,n_k);\ac\mu^\om_\iy,\ac\mu^\om_{\mu^\om(\be,n)-\ep}\bigr)\cdot \bigl[\bigl[\cdots \\
&\bigl[I_{e_\eta=0}([\M^\ss_{(\be_1,n_1)}(\mu^\om)]_\inv)^0,I_{e_\eta=0}([\M^\ss_{(\be_2,n_2)}(\mu^\om)]_\inv)^0\bigr]^{(0,\be_1,n_1)}_{(0,\be_2,n_2)},\\
&\ldots,I_{e_\eta=0}([\M^\ss_{(\be_{j-1},n_{j-1})}(\mu^\om)]_\inv)^0\bigr]^{(0,\sum_{i=1}^{j-2}\be_i,\sum_{i=1}^{j-2}n_i)}_{(0,\be_{j-1},n_{j-1})},\\
&[P^\alg_{n_j}(X,\be_j)]^0_\virt\bigr]^{(0,\sum_{i=1}^{j-1}\be_i,\sum_{i=1}^{j-1}n_i)}_{(1,\be_j,n_j)},\\
&I_{e_\eta=0}([\M^\ss_{(\be_{j+1},n_{j+1})}(\mu^\om)]_\inv)^0\bigr]^{(1,\sum_{i=1}^j\be_i,\sum_{i=1}^jn_i)}_{(0,\be_{j+1},n_{j+1})},\ldots,\\
&I_{e_\eta=0}([\M^\ss_{(\be_k,n_k)}(\mu^\om)]_\inv)^0\bigr]^{(1,\sum_{i=1}^{k-1}\be_i,\sum_{i=1}^{k-1}n_i)}_{(0,\be_k,n_k)}.
\end{aligned}
\nonumber
\ea

In the sum on the right hand side, there are finitely many possibilities for $j,k$ and $\be_1,\ldots,\be_k$. Fix $j,k,\be_1,\ldots,\be_k$, and consider how the term in the sum behaves as a function of $(n_1,\ldots,n_k)\in\Z^k$. By Proposition \ref{pt3prop1}, for $i=1,\ldots,k$ with $i\ne j$, the function $n_i\mapsto I_{e_\eta=0}([\M^\ss_{(\be_i,n_i)}(\mu^\om)]_\inv)^0$ is quasi-polynomial in $n_i$. For the term $[P^\alg_{n_j}(X,\be_j)]^0_\virt$ on the right hand side, if $\be_j\ne 0$ then $\be_j$ can be split into $K(\be_j)\ge 1$ effective summands, so $\be$ can be split into $K(\be_j)+k-1$ effective summands. Thus $K(\be_j)+k-1\le K+1$, giving $K(\be_j)\le K$ as $k\ge 2$. So by the inductive hypothesis $(*)_K$, the function $n_j\mapsto [P^\alg_{n_j}(X,\be_j)]^0_\virt$ is piecewise quasi-polynomial in $n_j$.

Consider the functions $\Z^k\ra\Q$ mapping
\begin{align*}
(n_1,\ldots,n_k)&\longmapsto U\bigl((0,\be_1,n_1),\ldots,(0,\be_{j-1},n_{j-1}),(1,\be_j,n_j),(0,\be_{j+1},n_{j+1}),\\
&\qquad\qquad\qquad \ldots,(0,\be_k,n_k);\ac\mu^\om_\iy,\ac\mu^\om_{\mu^\om(\be,n_1+\cdots+n_k)-\ep}\bigr),\\
(n_1,\ldots,n_k)&\longmapsto \ti U\bigl((0,\be_1,n_1),\ldots,(0,\be_{j-1},n_{j-1}),(1,\be_j,n_j),(0,\be_{j+1},n_{j+1}),\\
&\qquad\qquad\qquad \ldots,(0,\be_k,n_k);\ac\mu^\om_\iy,\ac\mu^\om_{\mu^\om(\be,n_1+\cdots+n_k)-\ep}\bigr).
\end{align*}
By Definition \ref{pt2def11} for $U(\cdots)$ and Definitions \ref{pt2def12} and \ref{pt2def13} for $\mu^\om$ and $\ac\mu^\om$, and because we have chosen $\om\in H^2(X,\Q)$ and $\ep>0$ in $\Q$, we see that the first function depends on finitely many rational affine linear equalities and inequalities in $n_1,\ldots,n_k$. Thus there exists a chamber decomposition $\Z^k=A_1\amalg\cdots\amalg A_N$ as in Definition \ref{pt3def1} such that the first function is constant on each chamber $A_i$.

Now the coefficients $\ti U(\cdots)$ are characterized in Theorem \ref{pt2thm4}, and they are {\it not uniquely defined}. Refining the decomposition $\Z^k=A_1\amalg\cdots\amalg A_N$ if necessary, we can suppose that if we replace the input $(0,\be_1,n_1),\ldots,(0,\be_k,n_k)$ in $U(\cdots)$ in the first function by any permutation of $(0,\be_1,n_1),\ldots,(0,\be_k,n_k)$, then the modified function is still constant on each $A_i$. Then by Theorem \ref{pt2thm4}, we can {\it choose\/} the $\ti U(\cdots)$ such that the second function is constant on each $A_i$, as the defining property of the $\ti U(\cdots)$ depends only on $U(\cdots)$ for all permutations of~$(0,\be_1,n_1),\ldots,(0,\be_{j-1},n_{j-1}),(1,\be_j,n_j),(0,\be_{j+1},n_{j+1}),\ldots,(0,\be_k,n_k)$.

Next fix $d',\be',d'',\be'',u,v$, and consider the function $\Z^2\ra H_*(\M_0,\Q)$ mapping $(n',n'')\mapsto [u,v]^{(d',\be',n')}_{(d'',\be'',n'')}$, for $[\,,\,]^{(d',\be',n')}_{(d'',\be'',n'')}$ as in \eq{pt3eq12}. From (i)--(iii) above and the definitions \eq{pt2eq15}--\eq{pt2eq16} of $D$ and $u_k(v)$, we can show that $D$ when applied to $D^j(u_k(v))$ is linear in $n'+n''$, and $u_k(v)$ is polynomial in $(n',n'')$. It is important in proving this that the factor $z^{\chi(\al,\be)+\chi(\be,\al)}$ in \eq{pt2eq16} with $\al=\up(d',\be',n')$ and $\be=\up(d'',\be'',n'')$ is independent of $n',n''$, so $n',n''$ do not change the powers of $z$ we need to take coefficients of. Elsewhere in \eq{pt2eq16}, after identifying $H_*(\M_\al,\Q)\cong H_*(\M_0,\Q)$ and $H_*(\M_\be,\Q)\cong H_*(\M_0,\Q)$ using \eq{pt2eq7}, the term $(\Psi_\al)_*(t^j\bt u)$ may be understood as polynomial of degree $j$ in $n'$, and the factor $-\cap z^ic_i\bigl((\cExt_{\al,\be}^\bu)^\vee\op\si_{\al,\be}^*(\cExt^\bu_{\be,\al})\bigr)$ as polynomial of degree $i$ in~$n',n''$.

Combining all the above, we see that for fixed $j,k,\be_1,\ldots,\be_k$, the term in the sum in \eq{pt3eq13} is piecewise quasi-polynomial in $(n_1,\ldots,n_k)\in\Z^k$, because it begins with inputs $I_{e_\eta=0}([\M^\ss_{(\be_i,n_i)}(\mu^\om)]_\inv)^0, P^\alg_{n_j}(X,\be_j)]^0_\virt$ which are (piecewise) quasi-polynomial in $n_i,n_j$, then modifies them by operations $[\,,\,]^{(d',\be',n')}_{(d'',\be'',n'')}$ which are polynomial in $n_1,\ldots,n_k$, and then multiplies them by coefficients $\ti U(\cdots)$ which are piecewise constant in $n_1,\ldots,n_k$. Therefore Proposition \ref{pt3prop2} shows that for fixed $j,k,\be_1,\ldots,\be_k$, the sum over $n=n_1+\cdots+n_k$ in \eq{pt3eq13} is piecewise quasi-polynomial in~$n$.

Summing over the finitely many possibilities for $j,k,\be_1,\ldots,\be_k$, we now see that if $n>C_\be\,\om\cdot \Pi_\alg^\hom(\be)$ then the function $n\mapsto [P^\alg_n(X,\be)]_\virt$ is piecewise quasi-polynomial in $n$. Now $P^\alg_n(X,\be)=\es$ for $n\le M$ for some $M\in\Z$. Thus we may make a finite chamber decomposition of $\Z$ which for $n>C_\be\,\om\cdot \Pi_\alg^\hom(\be)$ is the chamber decomposition for which $n\mapsto [P^\alg_n(X,\be)]_\virt$ is quasi-polynomial on each chamber, together with a chamber $\{n\in\Z:n\le M\}$ on which $[P^\alg_n(X,\be)]_\virt=0$, and finitely many singleton chambers $\{n\}$ for $M<n\le C_\be\,\om\cdot \Pi_\alg^\hom(\be)$. Then $n\mapsto [P^\alg_n(X,\be)]_\virt$ is quasi-polynomial on each of these chambers, so $n\mapsto [P^\alg_n(X,\be)]_\virt$ is piecewise quasi-polynomial on all of $\Z$. This proves Proposition \ref{pt3prop3}(a) for~$\be$.

Proposition \ref{pt3prop3}(b) for $\be$ in truncated $G$-equivariant homology $H_*^{G,\le N}(\cdots)$ works by essentially the same argument. It is important that $H_k^{G,\le N}(\cdots)=0$ for $k<-N$, which means that the sums involved are finite, and the degrees of the polynomials bounded above; the same proof would not work for~$H_*^G(\cdots)$.

This completes the inductive step, so Proposition \ref{pt3prop3} holds by induction.
\end{proof}

\subsection{Proof of Theorem \ref{pt1thm2}}
\label{pt34}

For Theorem \ref{pt1thm2}(a), by Proposition \ref{pt3prop3}(a) the function $n\mapsto[P_n^\alg(X,\be)]^0_\virt$ is piecewise quasi-polynomial in $n$, relative to some chamber decomposition of $\Z$. We may take this chamber decomposition to have two infinite chambers $\{n\in\Z:n\le M\}$ and $\{n\in\Z:n\ge N\}$ for $M\le N$ in $\Z$, and other finite chambers in $M<n<N$, where $[P_n^\alg(X,\be)]^0_\virt=0$ for $n\le M$. For the chamber $\{n\in\Z:n\ge N\}$ there exist $d\ge 1$ and polynomials $P_j\in H_{2c_1(X)\cdot\be}(\M_0,\Q)[n]$ for $1\le j\le d$ such that $[P_n^\alg(X,\be)]^0_\virt=P_j(n)$ if $n\ge N$ and $n\equiv j\mod d$. It is easy to show that expanding $(1-q^d)^{-k}$ in powers of $(q^d)^{\ge 0}$ we have
\begin{equation*}
\sum_{n\ge N:n\equiv j\!\!\!\!\mod d}q^nP_j(n)=\frac{q^jQ_j(q^d)}{(1-q^d)^{\deg P_j+1}},
\end{equation*}
where $Q_j(q^d)$ is a Laurent polynomial in $q^d$. Hence
\begin{equation*}
\sum_{n\in\Z}[P_n^\alg(X,\be)]^0_\virt q^n=\sum_{n=M+1}^{N-1}[P_n^\alg(X,\be)]^0_\virt q^n
+\sum_{j=1}^d\frac{q^jQ_j(q^d)}{(1-q^d)^{\deg P_j+1}}.
\end{equation*}
The right hand side is a rational function $F(q)\in H_{2c_1(X)\cdot\be}(\M_0,\Q)(q),$ which has poles only at $q=0$ and $q=e^{2\pi ik/d}$ for $k=1,\ldots,d$. This proves Theorem \ref{pt1thm2}(a). Part (b) is proved in the same way, using Proposition~\ref{pt3prop3}(b).

\medskip

\noindent{\small\sc Reginald Anderson, Department of Mathematics,
University of California, Irvine.

\noindent E-mail: {\tt reginala@uci.edu.}

\noindent Dominic Joyce, The Mathematical Institute, Radcliffe
Observatory Quarter, Woodstock Road, Oxford, OX2 6GG, U.K. 

\noindent E-mail:  {\tt dominic.joyce@maths.ox.ac.uk.}

\smallskip

}


\begin{thebibliography}{200}
\addcontentsline{toc}{section}{References}

\bibitem{AOV} D. Abramovich, M. Olsson, and A. Vistoli, {\it Tame stacks in positive characteristic}, Ann. Inst. Fourier 58 (2008), 1057--1091. \href{https://arxiv.org/abs/math/0703310}{math.AG/0703310}.

\bibitem{Ande} R. Anderson, {\it Examples of descendent generating series for Pand\-hari\-pan\-de--Thomas stable pairs on smooth projective Fano threefolds via one-dimensional wall-crossing}, preprint, 2026.

\bibitem{AnHe} B. Antieau and J. Heller, {\it Some remarks on topological K-theory of dg categories}, Proc. A.M.S. 146 (2018), 4211--421. \href{https://arxiv.org/abs/1709.01587}{arXiv:1709.01587}.

\bibitem{BeRo} M. Beck and S. Robins, {\it Computing the continuous discretely: integer-point enumeration in polyhedra}, 2nd edition, Springer, New York, 2015.

\bibitem{BeFa} K. Behrend and B. Fantechi, {\it The intrinsic normal cone}, Invent. Math. 128 (1997), 45--88. \href{https://arxiv.org/abs/alg-geom/9601010}{alg-geom/9601010}.

\bibitem{Blan} A. Blanc, {\it Topological K-theory of complex noncommutative spaces}, Compos. Math. 152 (2016), 489--555. \href{https://arxiv.org/abs/1211.7360}{arXiv:1211.7360}.

\bibitem{Borc} R.E. Borcherds, {\it Vertex algebras, Kac--Moody algebras, and the Monster}, Proc. Nat. Acad. Sci. U.S.A. 83 (1986), 3068--3071.

\bibitem{Brid} T. Bridgeland, {\it Hall algebras and curve-counting invariants}, J. A.M.S. 24 (2011), 969--998. \href{https://arxiv.org/abs/1002.4374}{arXiv:1002.4374}.

\bibitem{Brio} M. Brion, {\it Algebraic group actions on normal varieties}, Trans. Moscow Math. Soc. 78 (2017), 85--107. \href{https://arxiv.org/abs/1703.09506}{arXiv:1703.09506}.

\bibitem{FHW} E.M. Friedlander, C. Haesemeyer, and M.E. Walker, {\it Techniques, computations, and conjectures for semi-topological K-theory}, Math. Ann. 330 (2004), 759--807.

\bibitem{FrWa1} E.M. Friedlander and M.E. Walker, {\it Comparing K-theories for complex varieties}, Amer. J. Math. 123 (2001), 779--810. 

\bibitem{FrWa2} E.M. Friedlander and M.E.Walker, {\it Semi-topological K-theory using function complexes}, Topology 41 (2002), 591--644.

\bibitem{FrWa3} E.M. Friedlander and M.E. Walker, {\it Semi-topological K-theory}, pages 877--924 in {\it Handbook of K-theory}, Springer, Berlin, 2005.

\bibitem{FrBZ} E. Frenkel and D. Ben-Zvi, {\it Vertex algebras and algebraic curves}, A.M.S, Providence, RI, 2004.

\bibitem{GeMa} S.I. Gelfand and Y.I. Manin, {\it Methods of Homological Algebra}, second edition, Springer, 2002. 

\bibitem{Gies} D. Gieseker, {\it On the moduli of vector bundles on an algebraic surface}, Ann. Math. 106 (1977), 45--60.

\bibitem{Gome} T.L. G\'omez, {\it  Algebraic stacks}, Proc. Indian Acad. Sci. Math. Sci. 111 (2001), 1--31. \href{https://arxiv.org/abs/math/9911199}{math.AG/9911199}.

\bibitem{Gros1} J. Gross, {\it The homology of moduli stacks of complexes}, \href{https://arxiv.org/abs/1907.03269}{arXiv:1907.03269}, 2019.

\bibitem{Gros2} J. Gross, {\it Moduli spaces of complexes of coherent sheaves}, Oxford PhD thesis, 2020, available at \href{https://ora.ox.ac.uk/objects/uuid:857c53a5-345b-4ab9-9420-f94c8030b4b3}{ora.ox.ac.uk}.

\bibitem{GJT} J. Gross, D. Joyce and Y. Tanaka, {\it Universal structures in $\C$-linear enumerative invariant theories}, SIGMA 18 (2022), 068. \href{https://arxiv.org/abs/2005.05637}{arXiv:2005.05637}.

\bibitem{Hart} R. Hartshorne, {\it Algebraic Geometry}, Graduate
Texts in Math. 52, \hfil\break  Springer, New York, 1977.

\bibitem{Huyb} D. Huybrechts, {\it Fourier--Mukai transforms in Algebraic Geometry}, Oxford University Press, Oxford, 2006.

\bibitem{HuLe} D. Huybrechts and M. Lehn, {\it The geometry of moduli spaces of sheaves}, second edition, CUP, Cambridge, 2010. 

\bibitem{Joyc1} D. Joyce, {\it Configurations in abelian categories. I. Basic properties and moduli stacks}, Adv. Math. 203 (2006), 194--255. \href{https://arxiv.org/abs/math/0312190}{math.AG/0312190}.

\bibitem{Joyc2} D. Joyce, {\it Configurations in abelian categories. II. Ringel--Hall algebras}, Adv. Math. 210 (2007), 635--706. \href{https://arxiv.org/abs/math/0503029}{math.AG/0503029}.

\bibitem{Joyc3} D. Joyce, {\it Configurations in abelian categories. III. Stability conditions and identities}, Adv. Math. 215 (2007), 153--219. \href{https://arxiv.org/abs/math/0410267}{math.AG/0410267}.

\bibitem{Joyc4} D. Joyce, {\it Configurations in abelian categories. IV. Invariants and changing stability conditions}, Adv. Math. 217 (2008), 125--204. \hfil\break \href{https://arxiv.org/abs/math/0410268}{math.AG/0410268}.

\bibitem{Joyc5} D. Joyce, {\it Vertex algebra and Lie algebra structures on the homology of moduli spaces}, in preparation, 2026. Preliminary version (2018) available as {\it Ringel--Hall style vertex algebra and Lie algebra structures on the homology of moduli spaces} at \url{https://people.maths.ox.ac.uk/~joyce}.

\bibitem{Joyc6} D. Joyce, {\it Enumerative invariants and wall-crossing formulae in abelian categories}, \href{https://arxiv.org/abs/2111.04694}{arXiv:2111.04694}, version 2 in preparation, 2026.

\bibitem{JoSo} D. Joyce and Y. Song, {\it A theory of generalized Donaldson--Thomas invariants}, Mem. A.M.S. 217 (2012), no. 1020. \href{https://arxiv.org/abs/0810.5645}{arXiv:0810.5645}.

\bibitem{KaMo1} I. Karpov and M. Moreira, {\it Generalized K-theoretic invariants and wall-crossing via non-abelian localization}, \href{https://arxiv.org/abs/2512.22360}{arXiv:2512.22360}, 2025.

\bibitem{KaMo2} I. Karpov and M. Moreira, {\it Rationality and symmetry of stable pairs generating series of Fano\/ $3$-folds}, \href{https://arxiv.org/abs/2604.06023}{arXiv:2604.06023}, 2026.

\bibitem{KoSo} M. Kontsevich and Y. Soibelman, {\it Stability structures, motivic Donaldson--Thomas invariants and cluster transformations}, \href{https://arxiv.org/abs/0811.2435}{arXiv:0811.2435}, 2008.

\bibitem{LaMo} G. Laumon and L. Moret-Bailly, {\it  Champs alg\'ebriques}, Ergeb. der Math. und ihrer Grenzgebiete 39, Springer, Berlin, 2000.

\bibitem{MNOP1} D. Maulik, N. Nekrasov, A. Okounkov, and R. Pandharipande, {\it Gromov--Witten theory and Donaldson--Thomas theory. I}, Compos. Math. 142 (2006), 1263--1285. \href{https://arxiv.org/abs/math/0312059}{math.AG/0312059}.

\bibitem{MNOP2} D. Maulik, N. Nekrasov, A. Okounkov, and R. Pandharipande, {\it Gromov--Witten theory and Donaldson--Thomas theory. II}, Compos. Math. 142 (2006), 1286--1304. \href{https://arxiv.org/abs/math/0406092}{math.AG/0406092}.

\bibitem{MOOP} D. Maulik, A. Oblomkov, A. Okounkov and R. Pandharipande, {\it Virasoro constraints for stable pairs on toric\/ $3$-folds}, Forum Math. Pi 10 (2022), no. e20. \href{https://arxiv.org/abs/2008.12514}{arXiv:2008.12514}.

\bibitem{McCl} J. McCleary, {\it A user's guide to spectral sequences}, Camb. Stud. Adv. Math. 58, CUP, Cambridge, 2001.

\bibitem{MiMo} J. Milnor and J. Moore, {\it On the structure of Hopf algebras}, Ann. Math. 81 (1965), 211--264.

\bibitem{Olss} M. Olsson, {\it  Algebraic Spaces and Stacks}, A.M.S. Colloquium Publications 62, A.M.S., Providence, RI, 2016.

\bibitem{Pand} R. Pandharipande, {\it Descendents for stable pairs on\/ $3$-folds}, pages 251--287 in Proc. Sympos. Pure Math. 99, A.M.S., 2018. \href{https://arxiv.org/abs/1703.01747}{arXiv:1703.01747}.

\bibitem{PaPi1} R. Pandharipande and A. Pixton, {\it Descendent theory for stable pairs on toric\/ $3$-folds}, J. Math. Soc. Japan 65 (2013), 1337--1372. \href{https://arxiv.org/abs/1011.4054}{arXiv:1011.4054}.

\bibitem{PaPi2} R. Pandharipande and A. Pixton, {\it Gromov--Witten/pairs correspondence for the quintic\/ $3$-fold}, J. A.M.S. 30 (2017), 389--449. \href{https://arxiv.org/abs/1206.5490}{arXiv:1206.5490}.

\bibitem{PaTh1} R. Pandharipande and R.P. Thomas, {\it Curve counting via stable pairs in the derived category}, Invent. Math. 178 (2009), 407--447. \href{https://arxiv.org/abs/0707.2348}{arXiv:0707.2348}.

\bibitem{PaTh2} R. Pandharipande and R.P. Thomas, {\it The\/ $3$-fold vertex via stable pairs}, Geom. Topol. 13 (2009), 1835--1876. \href{https://arxiv.org/abs/0709.3823}{arXiv:0709.3823}.

\bibitem{PaTh3} R. Pandharipande and R.P. Thomas, {\it Stable pairs and BPS invariants}, J. A.M.S. 23 (2010), 267--297. \href{https://arxiv.org/abs/0711.3899}{arXiv:0711.3899}.

\bibitem{PaTh4} R. Pandharipande and R.P. Thomas, {\it $13/2$ ways of counting curves}, pages 282--333 in L. Brambila-Paz et al., editors, {\it Moduli spaces}, L.M.S. Lecture Notes 411, CUP, 2014. \href{https://arxiv.org/abs/1111.1552}{arXiv:1111.1552}.

\bibitem{Pard} J. Pardon, {\it Universally counting curves in Calabi--Yau threefolds},  \hfil\break \href{https://arxiv.org/abs/2308.02948}{arXiv:2308.02948}, 2023.

\bibitem{Roma} M. Romagny, {\it Group actions on stacks and applications}, Michigan Math. J. 53 (2005), 209--236.

\bibitem{Ruda} A. Rudakov, {\it Stability for an abelian category}, J. Algebra 197 (1997), 231--245.

\bibitem{Simp} C. Simpson, {\it The topological realization of a simplicial presheaf}, \href{https://arxiv.org/abs/q-alg/9609004}{q-alg/9609004}, 1996.

\bibitem{StTh} J. Stoppa and  R.P. Thomas, {\it Hilbert schemes and stable pairs: GIT and derived category wall crossings}, Bull. Soc. Math. France 139 (2011), 297--339. \href{https://arxiv.org/abs/0903.1444}{arXiv:0903.1444}.

\bibitem{Thom} R.P. Thomas, {\it A holomorphic Casson invariant for Calabi--Yau $3$-folds, and bundles on $K3$ fibrations}, J. Diff. Geom. 54 (2000), 367--438. \hfil\break \href{https://arxiv.org/abs/math/9806111}{math.AG/9806111}.

\bibitem{Toda1} Y. Toda, {\it Limit stable objects on Calabi-Yau\/ $3$-folds}, Duke Math. J. 149 (2009), 157--208. \href{https://arxiv.org/abs/0803.2356}{arXiv:0803.2356}.

\bibitem{Toda2} Y. Toda, {\it Generating functions of stable pair invariants via wall-crossings in derived categories}, Adv. Stud. Pure Math. 59 (2010), 389--434. \hfil\break \href{https://arxiv.org/abs/0806.0062}{arXiv:0806.0062}.

\bibitem{Toda3} Y. Toda, {\it Curve counting theories via stable objects I. DT/PT correspondence}, J. A.M.S. 23 (2010), 1119--1157. \href{https://arxiv.org/abs/0902.4371}{arXiv:0902.4371}.

\bibitem{Toda4} Y. Toda, {\it Stability conditions and curve counting invariants on Calabi--Yau $3$-folds}, Kyoto J. Math. 52 (2012), 1--50. \href{https://arxiv.org/abs/1103.4229}{arXiv:1103.4229}.

\bibitem{Toen1} B. To\"en, {\it Higher and derived stacks: a global overview}, pages 435--487 in {\it Algebraic geometry -- Seattle 2005}. Proc. Sympos. Pure Math. 80, Part 1, A.M.S., 2009. \href{https://arxiv.org/abs/math/0604504}{math.AG/0604504}. 

\bibitem{Toen2} B. To\"en, {\it Derived Algebraic Geometry}, EMS Surveys in Mathematical Sciences 1 (2014), 153--240. \href{https://arxiv.org/abs/1401.1044}{arXiv:1401.1044}.

\bibitem{ToVa} B. To\"en and M. Vaqui\'e, {\it Moduli of objects in dg-categories}, Ann. Sci. \'Ec. Norm. Sup. 40 (2007), 387--444. \href{https://arxiv.org/abs/math/0503269}{math.AG/0503269}.

\bibitem{ToVe1} B. To\"en and G. Vezzosi, {\it  From HAG to DAG: derived moduli stacks}, pages 173--216 in {\it  Axiomatic, enriched and motivic homotopy theory}, NATO Sci. Ser. II Math. Phys. Chem., 131, Kluwer, Dordrecht, 2004. \href{https://arxiv.org/abs/math/0210407}{math.AG/0210407}.

\bibitem{ToVe2} B. To\"en and G. Vezzosi, {\it Homotopical Algebraic Geometry II: Geometric Stacks and Applications}, Mem. A.M.S. 193 (2008), no. 902. \hfil\break \href{https://arxiv.org/abs/math/0404373}{math.AG/0404373}.

\bibitem{Upme} M. Upmeier, {\it Homological Lie brackets on moduli spaces and pushforward operations in twisted K-theory}, J. Topol. 18 (2025), No. e70025. \href{https://arxiv.org/abs/2101.10990}{arXiv:2101.10990}.

\end{thebibliography}
\end{document}